\documentclass[10pt,twoside]{article}

%-----------------------------------
\usepackage{amsmath}
\usepackage{amsfonts}
\usepackage{amssymb}
\usepackage{amscd}
\usepackage{amsthm}
\usepackage{amsbsy}
\usepackage{graphicx}
\usepackage{bm}
\usepackage[all]{xy}
%------------------------------------
\textwidth 125truemm
\textheight 195truemm
\paperwidth 169truemm
\paperheight 239truemm
\oddsidemargin 3mm
\evensidemargin 3mm
\headsep 6mm
\footskip 11mm
\baselineskip 4.5mm
%-------------------------------------
\pagestyle{myheadings}

\def\@oddhead{\hfill \shorttitle \hfill \thepage}
\def\@evenhead{\thepage \hfill \shortauthor \hfill}
\def\@oddfoot{}
\def\@evenfoot{}

%-------------------------------------
\newtheorem{thm}{Theorem}[section]

\newtheorem{lem}[thm]{Lemma}
\newtheorem*{lem*}{Lemma}

\newtheorem{coro}[thm]{Corollary}

\theoremstyle{definition}

\theoremstyle{remark}

%--------------------------------------
\def\o{\circ}
\def\X{\mathfrak X}
\def\al{\alpha}
\def\be{\beta}
\def\ga{\gamma}
\def\de{\delta}
\def\ep{\varepsilon}
\def\ze{\zeta}
\def\et{\eta}

\def\ka{\kappa}
\def\la{\lambda}
\def\rh{\rho}
\def\si{\sigma}

\def\ph{\varphi}

\def\om{\omega}
\def\Ga{\Gamma}
\def\De{\Delta}

\def\Ph{\Phi}

\def\Om{\Omega}
\def\i{^{-1}}
\def\x{\times}
\def\p{\partial}
\let\on=\operatorname
\def\L{\mathcal L}
\def\Diff{\on{Diff}}
\def\Emb{\on{Emb}}
\def\Imm{\on{Imm}}
\def\vol{\on{vol}}
\newcommand{\East}[2]{-\raisebox{0.1pt}{$\mkern-16mu\frac{\;\;#1\;}{\;\;#2\;}\mkern-16mu$}\to}

\allowdisplaybreaks

%--------------------------------------

%\date{\today
%\\{\quad}\\
%Dedicated to David Mumford in gratitude and admiration}
\markboth{\hfill{\rm  Peter W.\ Michor} \hfill}%
{\hfill {\rm Manifolds of mappings and shapes} \hfill}
\title{\ \\[0.4cm] \ \\ \bf  Manifolds of mappings and shapes }
\author{Peter W.\ Michor\footnote{Fakult\"at f\"ur Mathematik,
Universit\"at Wien, Os\-kar-Mor\-gen\-stern-Platz 1, A-1090 Wien, Austria.
E-mail: peter.michor@univie.ac.at.}}

%-------------------------------------
\begin{document}
%-------------------

\maketitle

%------------------------

\thispagestyle{empty}

%--------------------------------------
\begin{abstract}
\vskip 3mm\footnotesize{

\vskip 4.5mm
\noindent
In his Habilitationsvortrag, Riemann described infinite dimensional manifolds parameterizing 
functions and shapes of solids. This is taken as an excuse to describe convenient calculus in 
infinite dimensions which allows for short and transparent proofs of the main facts of the theory 
of manifolds of smooth mappings. Smooth manifolds of immersions, diffeomorphisms, and shapes, and weak Riemannian metrics on them are treated, culminating in 
the surprising fact, that geodesic distance can vanish completely for them. 

\vspace*{2mm} \noindent{\bf 2010 Mathematics Subject Classification:
}  58B20, 58D15, 35Q31

\vspace*{2mm} \noindent{\bf Keywords and Phrases: }}
Convenient calculus, Manifolds of mappings, Diffeomorphism groups, Shape spaces, weak Riemannian 
metrics. 

\end{abstract}
\vspace*{-11.2cm}
%%%%%%%%%%%%%%%%%%%%%%%%%%%%%%%%%%%%%
%\vspace*{2mm} \noindent\hspace*{82mm}
%\begin{picture}(41,10)(0,0)\thicklines\setlength{\unitlength}{1mm}
%\put(0,2){\line(1,0){41}} \put(0,16){\line(1,0){41}}%
%\put(0,12.){\sl \copyright\hspace{1mm}Higher Education Press}
%\put(0,7.8){\sl \hspace*{4.8mm}and International Press}%
%\put(0,3.6){{\sl \hspace*{4.8mm}Beijing-Boston} }
%\end{picture}
%
%\vspace*{-17.6mm}\noindent{{\sl Title of\\
%This book}\\ALM\,??, pp.\,?--?} \vskip8mm
%%%%%%%%%%%%%%%%%%
\vspace*{11.6cm}
% ----------------------------------------------------------------

\section*{Contents}

1 Introduction
\\ \\
2 A short review of convenient calculus in infinite dimensions
\\ \\
3 Manifolds of mappings and regular Lie groups
\\ \\
4 Weak Riemannian metrics
\\ \\
5 Robust weak Riemannian metrics and Riemannian submersions

\newpage

\section{Introduction}\label{intro}
At the very birthplace of the notion of manifolds, in the 
Habilitations\-schrift \cite[end of section I]{Riemann1854}, Riemann mentioned infinite dimensional 
manifolds explicitly: 
\begin{quote}
``Es giebt indess auch Mannigfaltigkeiten, in welchen die Ortsbestimmung
nicht eine endliche Zahl, sondern entweder eine unendliche Reihe
oder eine stetige Mannigfaltigkeit von Gr\"os\-sen\-be\-stimmun\-gen
erfordert.
Solche Mannigfaltigkeiten bilden z.B.\ die m\"oglichen Bestimmungen einer
Function f\"ur ein gegebenes Gebiet, die m\"oglichen Gestalten einer
r\"aumlichen Figur u.s.w.''
\end{quote}
The translation into English in \cite{RiemannClifford} reads as follows:
\begin{quote}
``There are manifoldnesses in which the 
determination of position requires not a finite number, but either an endless series or a continuous 
manifoldness of determinations of quantity. Such manifoldnesses are, for example, the possible 
determinations of a function for a given region, the possible shapes of a solid figure, \&c.''
\end{quote} 
 
If one reads this with a lot of good will one can interpret it as follows: When Riemann sketched 
the general notion of a manifold, he also forsaw the notion of an infinite dimensional manifold of 
mappings between manifolds, and of a manifold of shapes. He then went on to describe the notion of 
Riemannian metric and to talk about curvature. 
I will take this as an excuse to describe 
the theory of manifolds of mappings, of diffeomorphisms, and of shapes, and of some striking 
results about weak Riemannian geometry on these spaces. See \cite{Bauer2014} for an overview 
article which is much more comprehensive for the aspect of shape spaces.

An explicit construction of manifolds of smooth mappings modeled on Fr\'echet spaces 
was described by  \cite{Eells58}. Differential calculus beyond the realm of Banach spaces has some 
inherent difficulties even in its definition; see section \ref{calc}.
Smoothness of composition and inversion was first treated on the group of all smooth 
diffeomorphisms of a compact manifold in \cite{Leslie67}; however, there was a gap in the proof, 
which was first filled by \cite{Gutknecht77}. 
Manifolds of $C^k$-mappings and/or mappings of Sobolev classes were treated by 
\cite{Eliasson67}, \cite{Eells66}, Smale-Abraham \cite{Abraham62}, and \cite{Palais68}. Since these 
are modeled on Banach spaces, they allow solution methods for equations and have found a lot of 
applications. See in particular \cite{EbinMarsden70}. 

\section{A short review of convenient calculus in infinite dimensions}\label{calc}

Traditional differential calculus works 
well for finite dimensional vector spaces and for Banach spaces. 
Beyond Banach spaces,
the main difficulty is that composition of 
linear mappings stops to be jointly continuous at the level of Banach 
spaces, for any compatible topology. Namely, if for a locally convex vector space $E$ and its dual 
$E'$ the evaluation mapping $\on{ev}:E\x E'\to \mathbb R$ is jointly continuous, then there are open 
neighborhoods of zero $U\subset E$ and $U'\subset E'$ with $\on{ev}(U\x U')\subset [-1,1]$. But then 
$U'$ is contained in the polar of an open set, and thus is bounded. So $E'$ is normable, and a 
fortiori $E$ is normable. 

For locally convex spaces which are more general than Banach spaces,
we sketch here the convenient approach as explained in 
\cite{FrolicherKriegl88} and \cite{KrieglMichor97}.

The name {\it convenient calculus} mimicks the paper \cite{Steenrod67}
whose results (but not the name `convenient') was predated by
\cite{Brown61}, \cite{Brown63}, \cite{Brown64}.
They discussed compactly generated spaces as a cartesian closed category for algebraic topology.  
Historical remarks on only those developments of calculus beyond Banach spaces that led to convenient 
calculus are given in \cite[end of chapter I, p.\ 73ff]{KrieglMichor97}.

\subsection{The $c^\infty$-topology}
Let $E$ be a 
locally convex vector space. A curve $c:\mathbb R\to E$ is called 
{\it smooth} or $C^\infty$ if all derivatives exist and are 
continuous. Let 
$C^\infty(\mathbb R,E)$ be the space of smooth curves. It can be 
shown that the set $C^\infty(\mathbb R,E)$ does not entirely depend on the locally convex 
topology of $E$, only on its associated bornology (system of bounded sets); see 
\cite[2.11]{KrieglMichor97}.
The final topologies with respect to the following sets of mappings into E coincide; see 
\cite[2.13]{KrieglMichor97}:
\begin{enumerate}
\item $C^\infty(\mathbb R,E)$.
\item The set of all Lipschitz curves 
(so that $\{\frac{c(t)-c(s)}{t-s}:t\neq s, |t|,|s|\le C\}$ 
is bounded in $E$, for each $C$). 
\item The set of injections $E_B\to E$ where $B$ runs through all bounded 
absolutely convex subsets in $E$, and where 
$E_B$ is the linear span of $B$ equipped with the Minkowski 
functional $\|x\|_B:= \inf\{\la>0:x\in\la B\}$.
\item The set of all Mackey-convergent sequences $x_n\to x$ 
(there exists a sequence 
$0<\la_n\nearrow\infty$ with $\la_n(x_n-x)$ bounded).
\end{enumerate}
{\it This topology is called the $c^\infty$-topology on $E$ and we write 
$c^\infty E$ for the resulting topological space.} 

In general 
(on the space $\mathcal{D}$ of test functions for example) it is finer 
than the given locally convex topology, it is not a vector space 
topology, since addition is no longer jointly 
continuous. Namely, even 
$c^\infty (\mathcal D\x \mathcal D)\ne c^\infty\mathcal D\x c^\infty\mathcal D$.

The finest among all locally convex topologies on $E$ 
which are coarser than $c^\infty E$ is the bornologification of the 
given locally convex topology. If $E$ is a Fr\'echet space, then 
$c^\infty E = E$. 

\subsection{Convenient vector spaces} 
A locally convex vector space 
$E$ is said to be a {\it convenient 
vector space} if one of the following equivalent conditions holds
(called $c^\infty$-completeness); see \cite[2.14]{KrieglMichor97}:
\begin{enumerate}
\item For any $c\in C^\infty(\mathbb R,E)$ the (Riemann-) integral 
$\int_0^1c(t)dt$ exists in $E$.
\item Any Lipschitz curve in $E$ is locally Riemann integrable.
\item A curve $c:\mathbb R\to E$ is $C^\infty$ if and only if $\la\o c$ is $C^\infty$
for all $\la\in E^*$, where $E^*$ is the dual  
of all continuous linear functionals on $E$. 
\begin{itemize}
\item
Equivalently, for all 
$\la\in E'$, the dual  of all bounded linear functionals. 
\item
Equivalently, for all $\la\in \mathcal V$, where $\mathcal V$ is a subset of $E'$ which 
recognizes bounded subsets in $E$. 
\end{itemize}
{We call this {\it scalarwise} $C^\infty$.}
\item Any Mackey-Cauchy-sequence (i.e.,  $t_{nm}(x_n-x_m)\to 0$  
for some $t_{nm}\to \infty$ in $\mathbb R$) converges in $E$. 
This is visibly a mild completeness requirement.
\item If $B$ is bounded closed absolutely convex, then $E_B$ 
is a Banach space.
\item If $f:\mathbb R\to E$ is scalarwise $\on{Lip}^k$, then $f$ is 
$\on{Lip}^k$, for $k>1$.
\item If $f:\mathbb R\to E$ is scalarwise $C^\infty$ then $f$ is 
differentiable at 0.
\end{enumerate}

Here a mapping $f:\mathbb R\to E$ is called $\on{Lip}^k$ if all 
derivatives up to order $k$ exist and are Lipschitz, locally on 
$\mathbb R$. That $f$ is scalarwise $C^\infty$ means $\la\o f$ is $C^\infty$  
for all continuous (equiv., bounded) linear functionals on $E$.

\subsection{Smooth mappings} 

Let $E$, and $F$ be convenient vector spaces, 
and let $U\subset E$ be $c^\infty$-open. 
A mapping $f:U\to F$ is called {\it smooth} or 
$C^\infty$, if $f\o c\in C^\infty(\mathbb R,F)$ for all 
$c\in C^\infty(\mathbb R,U)$. See \cite[3.11]{KrieglMichor97}.

If $E$ is a Fr\'echet space, then this notion coincides with all other reasonable notions of 
$C^\infty$-mappings; see below. 
Beyond Fr\'echet spaces, as a rule, there are more smooth mappings in the 
convenient setting than in other settings, e.g., $C^\infty_c$.

\subsection{Main properties of smooth calculus}
\begin{enumerate}
\item For maps on Fr\'echet spaces this %notion of smoothness 
coincides with all other reasonable definitions. On 
$\mathbb R^2$ this is non-trivial; see \cite{Boman67} or \cite[3.4]{KrieglMichor97}.
\item Multilinear mappings are smooth iff they are 
bounded; see \cite[5.5]{KrieglMichor97}.
\item If $E\supseteq U\East{f}{} F$ is smooth then the derivative 
$df:U\x E\to F$ is  
smooth, and also $df:U\to L(E,F)$ is smooth where $L(E,F)$ 
denotes the space of all bounded linear mappings with the 
topology of uniform convergence on bounded subsets; see \cite[3.18]{KrieglMichor97}.
\item The chain rule holds; see \cite[3.18]{KrieglMichor97}.
\item The space $C^\infty(U,F)$ is again a convenient vector space 
where the structure is given by the  injection
$$
C^\infty(U,F) \East{C^\infty(c,\ell)}{} 
\prod_{c\in C^\infty(\mathbb R,U), \ell\in F^*} 
C^\infty(\mathbb R,\mathbb R),
\quad f\mapsto (\ell\o f\o c)_{c,\ell},
$$
and where $C^\infty(\mathbb R,\mathbb R)$ carries the topology of compact 
convergence in each derivative separately; see \cite[3.11 and 3.7]{KrieglMichor97}.
\item The exponential law holds; see \cite[3.12]{KrieglMichor97}.: For $c^\infty$-open $V\subset F$, 
$$
C^\infty(U,C^\infty(V,G)) \cong C^\infty(U\x V, G)
$$
is a linear diffeomorphism of convenient vector spaces.
{\em Note that this (for $U=\mathbb R$) is the main assumption of variational calculus. Here it is a theorem.}
\item[7.] A linear mapping $f:E\to C^\infty(V,G)$ is smooth (by \thetag{2} equivalent to bounded) if 
and only if $E \East{f}{} C^\infty(V,G) \East{\on{ev}_v}{} G$ is smooth 
for each $v\in V$. 
({\it Smooth uniform boundedness theorem};
see \cite[theorem 5.26]{KrieglMichor97}.)
\item A mapping $f:U\to L(F,G)$ is smooth if and only if $U \East{f}{} L(F,G) \East{\on{ev}_v}{} G$ 
       is smooth for each $v\in F$, because then it is scalarwise smooth by the classical uniform 
       boundedness theorem. 
\item The following canonical mappings are smooth. This follows from the exponential law by simple 
       categorical reasoning; see \cite[3.13]{KrieglMichor97}.
\begin{align*}
&\operatorname{ev}: C^\infty(E,F)\x E\to F,\quad 
\operatorname{ev}(f,x) = f(x)\\
&\operatorname{ins}: E\to C^\infty(F,E\x F),\quad
\operatorname{ins}(x)(y) = (x,y)\\
&(\quad)^\wedge :C^\infty(E,C^\infty(F,G))\to C^\infty(E\x F,G)\\
&(\quad)^\vee :C^\infty(E\x F,G)\to C^\infty(E,C^\infty(F,G))\\
&\operatorname{comp}:C^\infty(F,G)\x C^\infty(E,F)\to C^\infty(E,G)\\
&C^\infty(\quad,\quad):C^\infty(F,F_1)\x C^\infty(E_1,E)\to 
\\&\qquad\qquad\qquad\qquad
\to C^\infty(C^\infty(E,F),C^\infty(E_1,F_1))\\
&\qquad (f,g)\mapsto(h\mapsto f\o h\o g)\\
&\prod:\prod C^\infty(E_i,F_i)\to C^\infty(\prod E_i,\prod F_i)
\end{align*}
\end{enumerate}

This ends our review of the standard results of convenient calculus. Just for the curious reader 
and to give a flavor of the arguments, 
we enclose a lemma that is used many times in the proofs of the results above.

\begin{lem*} {\rm (Special curve lemma, \cite[2.8]{KrieglMichor97})}
Let $E$ be a  locally convex vector space.
Let $x_n$ be a 
sequence which converges fast
to $x$ in $E$; i.e., for each 
$k\in \mathbb N$ the sequence $n^k(x_n-x)$ is bounded. 
Then the {\it infinite polygon} through the $x_n$ can be 
parameterized as a  
smooth curve $c:\mathbb R\to E$ such that $c(\frac1n)=x_n$ and $c(0)=x$.
\end{lem*}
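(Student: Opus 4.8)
The plan is to construct the smooth curve explicitly on each interval $[\frac1{n+1},\frac1n]$ by a suitable reparametrization of the straight segment from $x_{n+1}$ to $x_n$, and then check smoothness at the accumulation point $0$ using the fast convergence. The key idea is that a naive piecewise-linear parametrization fails to be even $C^1$ at the breakpoints, so one must slow the curve down near each node $\frac1n$ using a fixed smooth ``bump'' reparametrization; the fast decay of $x_n-x$ then absorbs all the factors of $n$ that appear when one differentiates.

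\emph{Step 1: A model reparametrization.} First I would fix once and for all a smooth function $\ph:\mathbb R\to[0,1]$ with $\ph(t)=0$ for $t\le 0$, $\ph(t)=1$ for $t\ge 1$, and all derivatives of $\ph$ vanishing at $0$ and $1$ (a standard smooth step function). On the interval $I_n:=[\frac1{n+1},\frac1n]$ of length $\ell_n=\frac1n-\frac1{n+1}=\frac1{n(n+1)}$, define an affine bijection $u_n:I_n\to[0,1]$ and set
\[
c(t) = x_{n+1} + \ph(u_n(t))\,(x_n-x_{n+1}),\qquad t\in I_n,
\]
and $c(0)=x$. Because $\ph$ and all its derivatives vanish at the endpoints $0$ and $1$, the one-sided derivatives of $c$ of every order at each interior node $\frac1n$ are zero from both sides, so $c$ is $C^\infty$ on $(0,1]$ (and we extend it to be constant, or by a similar construction, outside $(0,1]$, e.g.\ $c(t)=x_1$ for $t\ge 1$).

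\emph{Step 2: Smoothness at $0$.} The only real point to check is that $c$ is smooth at $t=0$ with $c^{(k)}(0)=0$ for all $k\ge1$. On $I_n$ the $k$-th derivative is
\[
c^{(k)}(t) = u_n'(t)^k\,\ph^{(k)}(u_n(t))\,(x_n-x_{n+1}),
\]
and $|u_n'(t)| = 1/\ell_n = n(n+1)\le (n+1)^2$. Hence, using a continuous seminorm $p$ on $E$,
\[
p\bigl(c^{(k)}(t)\bigr) \le (n+1)^{2k}\,\|\ph^{(k)}\|_\infty\,\bigl(p(x_n-x)+p(x_{n+1}-x)\bigr)\quad\text{for }t\in I_n.
\]
By fast convergence, for the exponent $2k+2$ the sequences $n^{2k+2}p(x_n-x)$ are bounded, so $p(x_n-x)\le C_{k,p}/n^{2k+2}$, and therefore $p(c^{(k)}(t))\to 0$ as $t\to 0^+$ for every $k$ and every seminorm $p$. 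An induction on $k$ then shows $c$ is $k$-times differentiable at $0$ with $c^{(k)}(0)=0$: given $c^{(k-1)}(0)=0$, the difference quotient $\frac1t c^{(k-1)}(t)$ for $t\in I_n$ is estimated, via the mean value theorem applied scalarwise (legitimate since $E$ is convenient, so scalarwise smooth curves are smooth and the fundamental theorem of calculus holds), by $\sup_{s\le t}p(c^{(k)}(s))\to 0$.

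\emph{The main obstacle} is precisely this bookkeeping at $0$: one must make sure that differentiating the reparametrization $u_n$ never produces worse than a fixed power of $n$ per derivative, so that the polynomial blow-up $n^{2k}$ is beaten by the super-polynomial decay of $x_n-x$. This is exactly what ``converges fast'' is designed to give. A secondary technical point is that, working in a general (not necessarily sequentially complete) locally convex space, one should phrase the differentiability claim scalarwise, i.e.\ check $\la\o c$ for all $\la\in E'$ and invoke the convenient-calculus characterization of smooth curves; but since the estimates above are uniform over a basis of seminorms, this causes no trouble. Finally one records $c(\frac1n)=x_{n+1}+\ph(0)(x_n-x_{n+1})=x_{n+1}$ — so to land the nodes correctly one uses instead the intervals with $c$ running $x_n\to x_{n+1}$ and evaluates $c(\frac1n)=x_n$, a harmless relabeling.
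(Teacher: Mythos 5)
Your construction is correct and is essentially the proof given in the cited reference \cite[2.8]{KrieglMichor97}: reparametrize each segment of the polygon by a fixed smooth step function via an affine change of variables, and beat the resulting polynomial growth $n^{2k}$ in the $k$-th derivative by the super-polynomial decay of $x_n-x$, concluding that all derivatives at $0$ exist and vanish. Two small remarks: $E$ is only assumed locally convex, not convenient, but your mean-value step needs nothing more than Hahn--Banach applied to a continuous seminorm, so no appeal to convenient calculus is required; and with the increasing affine bijection $u_n:[\tfrac1{n+1},\tfrac1n]\to[0,1]$ your formula already gives $c(\tfrac1n)=x_{n+1}+\varphi(1)(x_n-x_{n+1})=x_n$, so the relabeling in your last paragraph is unnecessary.
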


\subsection{Remark}

Convenient calculus (i.e., having properties 6 and 7) exists also for:
\begin{itemize}
\item  Real analytic mappings; see \cite{KrieglMichor90} or \cite[Chapter II]{KrieglMichor97}.
\item  Holomorphic mappings; see \cite{KrieglNel85} or \cite[Chapter II]{KrieglMichor97} (using the notion of 
\cite{Fantappie30,Fantappie33}).
\item  Many classes of Denjoy Carleman ultradifferentiable functions, both of Beurling type and of 
Roumieu-type, see \cite{KMRc,KMRq,KMR15c,KMR15d}.
\item With some adaptations, $\on{Lip}^k$; see \cite{FrolicherKriegl88}. 
\item With more adaptations, even $C^{k,\al}$ 
(the $k$-th derivative is H\"older-continuous with index $\alpha$); see 
\cite{FF89}, \cite{Faure91}.
\end{itemize}

The following result is very useful if one wants to apply convenient calculus to spaces which are 
not tied to its categorical origin, like the Schwartz spaces $\mathcal S$, $\mathcal D$, or Sobolev 
spaces; for its uses see \cite{MichorMumford2013z} and \cite{KMR14}.

\begin{thm}\label{FK}
{\rm \cite[theorem 4.1.19]{FrolicherKriegl88}}
Let $c:\mathbb R\to E$ be a curve in a convenient vector space $E$. Let 
$\mathcal{V}\subset E'$ be a subset of bounded linear functionals such that 
the bornology of $E$ has a basis of $\sigma(E,\mathcal{V})$-closed sets. 
Then the following are equivalent:
\begin{enumerate}
\item $c$ is smooth
\item There exist locally bounded curves $c^{k}:\mathbb R\to E$ such that
      $\ell\o c$ is smooth $\mathbb R\to \mathbb R$ with $(\ell\o c)^{(k)}=\ell\o
      c^{k}$, for each $\ell\in\mathcal V$. 
\end{enumerate}
\end{thm}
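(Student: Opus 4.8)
The plan is to settle $(1)\Rightarrow(2)$ in a line and to concentrate on $(2)\Rightarrow(1)$, where the hypothesis on $\mathcal V$ is exactly what allows one to recover $E$-valued information from scalar behaviour along functionals in $\mathcal V$. For $(1)\Rightarrow(2)$: if $c$ is smooth then, $E$ being convenient, each iterated derivative $c^{(k)}$ exists in $E$ and is again smooth, hence locally bounded, so one may take $c^k:=c^{(k)}$; every $\ell\in\mathcal V\subset E'$ is bounded linear, hence smooth with $d\ell=\ell$ by property~(2) of smooth calculus, so $\ell\o c$ is smooth and the chain rule gives $(\ell\o c)^{(k)}=\ell\o c^k$. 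Before the converse I would note that hypothesis~(2) is \emph{self-reproducing}: with $c^0:=c$, each $c^j$ again satisfies~(2), with associated curves $u\mapsto c^{j+k}(u)$, since $\ell\o c^j=(\ell\o c)^{(j)}$ is smooth and $(\ell\o c^j)^{(k)}=\ell\o c^{j+k}$ for $\ell\in\mathcal V$; thus anything proved for an arbitrary curve satisfying~(2) applies to all the $c^j$ at once.

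The second preliminary is that the hypothesis on $\mathcal V$ converts scalar bounds into bounds in $E$. If $A\subseteq E$ is bounded, its absolutely convex hull is bounded, and the $\si(E,\mathcal V)$-closure $\widehat A$ of that hull is again absolutely convex and, being contained in some $\si(E,\mathcal V)$-closed member of the given bornology basis, is bounded. Since the dual of $(E,\si(E,\mathcal V))$ is $\on{span}_{\mathbb R}\mathcal V$, Hahn--Banach separation gives
\[
  \widehat A=\bigl\{x\in E:\ \ph(x)\le\textstyle\sup_{\widehat A}\ph\ \text{ for all }\ph\in\on{span}_{\mathbb R}\mathcal V\bigr\},
\]
and $0\in\widehat A$, so $\sup_{\widehat A}\ph\ge0$. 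Hence, to place a vector of $E$ into a prescribed bounded absolutely convex set it suffices to estimate it against all $\ph\in\on{span}_{\mathbb R}\mathcal V$.

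Now the substance. Fix a compact interval $J$. For $\ell\in\mathcal V$, smoothness of $\ell\o c$ together with $(\ell\o c)'=\ell\o c^1$ and the fundamental theorem of calculus give, for $t,t+s\in J$,
\[
  \ell\Bigl(\tfrac{c(t+s)-c(t)}{s}\Bigr)=\int_0^1\ell\bigl(c^1(t+\th s)\bigr)\,d\th .
\]
As $c^1$ is locally bounded, $A:=\{c^1(u):u\in J\}$ is bounded; applying an arbitrary $\ph=\sum_i a_i\ell_i\in\on{span}_{\mathbb R}\mathcal V$ and moving it inside the integral gives $\ph\bigl(\tfrac{c(t+s)-c(t)}{s}\bigr)=\int_0^1\ph\bigl(c^1(t+\th s)\bigr)\,d\th\le\sup_{\widehat A}\ph$, whence $\tfrac{c(t+s)-c(t)}{s}\in\widehat A$ by the previous paragraph. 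So the difference quotients of $c$ are bounded on every compact interval; $c$ is Lipschitz, hence continuous (for the locally convex topology, since $c(t)-c(s)=(t-s)\cdot(\text{element of a fixed bounded set})\to0$), and the same holds for each $c^j$ by self-reproduction. To upgrade this to differentiability, write $c^1(t+\th s)-c^1(t)=\th s\cdot\de c^1(t,\th s)$, where by the preceding step the difference quotient $\de c^1(t,\cdot)$ stays, on $J$, in a fixed bounded, $\si(E,\mathcal V)$-closed, absolutely convex set $B'$; the identity above then becomes
\[
  \ell\Bigl(\tfrac{c(t+s)-c(t)}{s}-c^1(t)\Bigr)=s\int_0^1\th\,\ell\bigl(\de c^1(t,\th s)\bigr)\,d\th ,
\]
and running the same separation argument puts $\tfrac{c(t+s)-c(t)}{s}-c^1(t)$ into $|s|\,B'$. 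Hence this difference tends to $0$ in the Mackey sense, a fortiori topologically, as $s\to0$: $c$ is differentiable with $c'=c^1$. Iterating with $c^1$ in place of $c$ (self-reproduction once more) yields $c^{(k)}=c^k$ for all $k$, each continuous by the previous sentence, so $c\in C^\infty(\mathbb R,E)$.

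I expect the genuine difficulty to be the passage from ``$\mathcal V$-scalarwise control'' to ``control in $E$'': one must run Hahn--Banach separation carefully in the possibly non-Hausdorff weak space $(E,\si(E,\mathcal V))$, identify its topological dual, and --- the decisive point --- ensure that the absolutely convex hulls and $\si(E,\mathcal V)$-closures occurring in the argument remain bounded, which is precisely what the assumption that the bornology of $E$ has a basis of $\si(E,\mathcal V)$-closed sets provides. The Taylor/fundamental-theorem-of-calculus bookkeeping and the step from Mackey convergence to topological convergence are routine convenient-calculus manipulations.
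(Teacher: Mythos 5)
Your argument is correct. Note that the paper itself offers no proof of this theorem --- it is quoted verbatim from Fr\"olicher--Kriegl \cite[4.1.19]{FrolicherKriegl88} --- so there is nothing to compare against line by line; but your route (the ``self-reproducing'' observation, the Hahn--Banach/bipolar characterization of bounded absolutely convex $\si(E,\mathcal V)$-closed sets via $\on{span}_{\mathbb R}\mathcal V$, and the scalarwise fundamental theorem of calculus used first to bound difference quotients and then to trap $\tfrac{c(t+s)-c(t)}{s}-c^1(t)$ in $|s|B'$, giving Mackey and hence topological convergence) is exactly the standard proof in that reference, and every step you flag as delicate is handled correctly. The only remark worth adding is that your proof of $(2)\Rightarrow(1)$ never invokes $c^\infty$-completeness of $E$, which is consistent: since the candidate derivatives $c^k$ are supplied by hypothesis, no limit needs to be manufactured inside $E$.
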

If $E$ is reflexive, then for any point separating subset
$\mathcal{V}\subset E'$ the bornology of $E$ has a basis of 
$\si(E,\mathcal{V})$-closed subsets, by \cite[4.1.23]{FrolicherKriegl88}.

\section{Manifolds of mappings and regular Lie groups}\label{mf}

In this section I hope to demonstrate how convenient calculus allows for very short and transparent 
proofs of the core results in the theory of manifolds of smooth mappings. 

\subsection{The manifold structure on $C^\infty(M,N)$}
Let $M$ be a compact (for simplicity's sake) finite dimensional manifold and $N$ a manifold. We use an 
auxiliary Riemann metric $\bar g$ on $N$. Then 
$$\xymatrix@R=.7em{
& 0_N \ar@{_{(}->}[d] \ar@(l,dl)[dl]_-{\text{zero section\quad }} 
& & N \ar@{^{(}->}[d] \ar@(r,dr)[rd]^-{\text{ diagonal}} & 
\\
TN &  V^N \ar@{_{(}->}[l]^{\text{ open  }} \ar[rr]^{ (\pi_N,\exp^{\bar g}) }_{\cong} & & V^{N\x N} 
\ar@{^{(}->}[r]_{\text{ open  }} & N\x N
}$$
$C^\infty(M,N)$, the space of smooth mappings $M\to N$, has the following manifold structure.
A chart, centered at $f\in C^\infty(M,N)$, is:
\begin{align*}
C^\infty(M,N)\supset U_f &=\{g: (f,g)(M)\subset V^{N\x N}\} \East{u_f}{} \tilde U_f 
\subset \Ga(f^*TN)
\\
u_f(g) = (\pi_N,&\exp^{\bar g})\i \o (f,g),\quad u_f(g)(x) = (\exp^{\bar g}_{f(x)})\i(g(x)) 
\\
(u_f)\i(s) &= \exp^{\bar g}_f\o s, \qquad (u_f)\i(s)(x) = \exp^{\bar g}_{f(x)}(s(x))
\end{align*}

\begin{lem} \label{firstlemma} $C^\infty(\mathbb R,\Ga(M;f^*TN)) = \Ga(\mathbb R\x M; \on{pr_2}^* f^*TN)$
\end{lem}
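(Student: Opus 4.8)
The plan is to identify both sides as subsets of $C^\infty(\mathbb R\times M, TN)$ and show the defining conditions coincide. An element $c\in C^\infty(\mathbb R,\Ga(M;f^*TN))$ is, by definition of the manifold structure on the space of sections (which is a $c^\infty$-open—in fact closed linear—subspace of $C^\infty(M,TN)$), a smooth curve into $C^\infty(M,TN)$ whose value at each $t$ is a section of $f^*TN$, i.e.\ satisfies $\pi_N\circ c(t)(x)=f(x)$ for all $x$. By the exponential law (property 6 of smooth calculus in section~\ref{calc}), such a curve corresponds bijectively to a smooth map $c^\vee:\mathbb R\times M\to TN$, and the pointwise constraint $\pi_N\circ c^\vee(t,x)=f(x)=f(\on{pr}_2(t,x))$ is exactly the condition that $c^\vee$ be a section of $\on{pr}_2^*f^*TN\to\mathbb R\times M$. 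Conversely a section of $\on{pr}_2^*f^*TN$ is a smooth map $\mathbb R\times M\to TN$ with that same constraint, and applying the exponential law the other way produces a smooth curve $\mathbb R\to C^\infty(M,TN)$ landing in $\Ga(M;f^*TN)$.

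First I would make precise that $\Ga(M;f^*TN)$ is a convenient vector space sitting as a closed linear subspace of $C^\infty(M,TN)$: it is the preimage of the point $f\in C^\infty(M,N)$ under the continuous linear (hence smooth) map $(\pi_N)_*:C^\infty(M,TN)\to C^\infty(M,N)$, $s\mapsto \pi_N\circ s$. Because a curve into a closed linear subspace is smooth iff it is smooth into the ambient space, we get
$$
C^\infty(\mathbb R,\Ga(M;f^*TN))=\bigl\{c\in C^\infty(\mathbb R,C^\infty(M,TN)) : (\pi_N)_*\circ c \equiv f\bigr\}.
$$
Second, I invoke the exponential law $C^\infty(\mathbb R,C^\infty(M,TN))\cong C^\infty(\mathbb R\times M,TN)$, a linear diffeomorphism of convenient vector spaces, and track how the constraint $(\pi_N)_*\circ c\equiv f$ transforms: $c\mapsto c^\vee$ sends it to $\pi_N\circ c^\vee=f\circ\on{pr}_2$, which is precisely the condition defining $\Ga(\mathbb R\times M;\on{pr}_2^*f^*TN)$ as a subset of $C^\infty(\mathbb R\times M,TN)$. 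Third, I note the corresponding identification on the right-hand side, namely that $\Ga(\mathbb R\times M;\on{pr}_2^*f^*TN)$ is the closed linear subspace of $C^\infty(\mathbb R\times M,TN)$ cut out by $\pi_N\circ(\cdot)=f\circ\on{pr}_2$; since the exponential-law isomorphism is linear and carries one subspace bijectively onto the other, it restricts to the asserted linear diffeomorphism.

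The only genuine point requiring care—the main obstacle—is the bookkeeping around the pullback bundles: $\on{pr}_2^*f^*TN$ and $(f\circ\on{pr}_2)^*TN$ must be identified as the same vector bundle over $\mathbb R\times M$, so that "section of $\on{pr}_2^*f^*TN$" literally means "smooth map $\mathbb R\times M\to TN$ covering $f\circ\on{pr}_2$". This is the standard functoriality of pullbacks, $\on{pr}_2^*(f^*TN)\cong(f\circ\on{pr}_2)^*TN$, and costs nothing beyond naturality, but it is what makes the constraint translate cleanly under the exponential law. Everything else is formal: smoothness in the convenient sense is tested on curves, the exponential law is an honest theorem in this setting, and closed linear subspaces behave well under it, so no estimates or charts are needed.
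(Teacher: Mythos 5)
Your overall strategy --- identify both sides inside a common function space and track the constraint $\pi_N\circ(\cdot)=f\circ\on{pr}_2$ under cartesian closedness --- has the right shape, but the key step is invoked at a level where it is not yet available, and within this paper's development it is circular. The exponential law you cite (property 6 of Section \ref{calc}) is stated only for $c^\infty$-open subsets of convenient \emph{vector spaces}; you apply it in the form $C^\infty(\mathbb R, C^\infty(M,TN))\cong C^\infty(\mathbb R\times M, TN)$, with a compact manifold $M$ as domain and the manifold $TN$ as target. That statement is exactly the later lemma $C^\infty(\mathbb R,C^\infty(M,N))\cong C^\infty(\mathbb R\x M,N)$ (applied with $TN$ in place of $N$), which the paper \emph{deduces from} the lemma you are asked to prove. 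Relatedly, phrases such as ``closed linear subspace of $C^\infty(M,TN)$'' and ``continuous linear map $(\pi_N)_*$'' do not parse at this stage: $C^\infty(M,TN)$ is not a vector space, and its manifold structure is precisely what is being constructed here, with $\Ga(f^*TN)$ as the model space --- so it cannot be presupposed.

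The missing idea, which is the entire content of the paper's one-line proof, is to \emph{trivialize the bundle first}: embed $f^*TN$ as a direct summand of a trivial bundle $M\x\mathbb R^k$ (or work with a finite atlas and a partition of unity), so that $\Ga(M;f^*TN)$ becomes a closed linear direct summand of $C^\infty(M,\mathbb R^k)$, and likewise $\Ga(\mathbb R\x M;\on{pr_2}^*f^*TN)$ becomes the corresponding summand of $C^\infty(\mathbb R\x M,\mathbb R^k)$, cut out by the same fiberwise projections. One must also reduce the compact domain $M$ to charts, where $C^\infty(M,\mathbb R^k)$ carries the initial structure with respect to restrictions to a finite atlas. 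After these reductions the vector-space exponential law $C^\infty(\mathbb R,C^\infty(U,\mathbb R^k))\cong C^\infty(\mathbb R\x U,\mathbb R^k)$ legitimately applies and visibly preserves the summand; your bookkeeping with $\on{pr_2}^*(f^*TN)\cong(f\o\on{pr_2})^*TN$ then goes through verbatim. So the argument is repairable, but as written it assumes the manifold-level exponential law that this lemma is meant to establish.
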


This follows by cartesian closedness after trivializing the bundle $f^*TN$.

\begin{lem} The chart changes are smooth ($C^\infty$) 

$$
\tilde U_{f_1}\ni s\mapsto (\pi_N,\exp^{\bar g})\o s 
\mapsto (\pi_N,\exp^{\bar g})\i\o(f_2,\exp^{\bar g}_{f_1}\o s)
$$
\end{lem}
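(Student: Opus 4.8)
The plan is to apply the defining principle of convenient calculus from Section \ref{calc}: a map between $c^\infty$-open subsets of convenient vector spaces is smooth precisely when it carries smooth curves to smooth curves. First I would put the chart change $u_{f_2}\circ u_{f_1}\i$ into pointwise form,
\[
\bigl(u_{f_2}\circ u_{f_1}\i\bigr)(s)(x)=\bigl(\exp^{\bar g}_{f_2(x)}\bigr)\i\Bigl(\exp^{\bar g}_{f_1(x)}\bigl(s(x)\bigr)\Bigr),
\]
and recognise it as post-composition with one fixed fibre map. Concretely, let $V\subset f_1^*TN$ be the open subset of all $(x,X)$ with $\bigl(f_2(x),\exp^{\bar g}_{f_1(x)}(X)\bigr)\in V^{N\times N}$; this is open because $(\pi_N,\exp^{\bar g})$ is a diffeomorphism onto the open set $V^{N\times N}$. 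Define $\Phi\colon V\to f_2^*TN$ over $\on{id}_M$ by $\Phi(x,X)=\bigl(x,(\exp^{\bar g}_{f_2(x)})\i(\exp^{\bar g}_{f_1(x)}(X))\bigr)$. Since $\exp^{\bar g}$ and $(\pi_N,\exp^{\bar g})\i$ are smooth maps between finite-dimensional manifolds, $\Phi$ is smooth, and the chart change is simply $s\mapsto\Phi\circ s$, defined on $u_{f_1}(U_{f_1}\cap U_{f_2})=\{s\in\tilde U_{f_1}:s(M)\subset V\}$, which is $c^\infty$-open in $\Gamma(f_1^*TN)$ because $M$ is compact and the defining condition is open and evaluated pointwise.

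Next I would check the curve criterion. Let $c\colon\mathbb R\to u_{f_1}(U_{f_1}\cap U_{f_2})$ be a smooth curve. By Lemma \ref{firstlemma} it corresponds to a section $\hat c\in\Gamma(\mathbb R\times M;\on{pr}_2^*f_1^*TN)$, i.e.\ a smooth map $\mathbb R\times M\to f_1^*TN$ lying over $\on{pr}_2$ and taking values in $V$. Composing with $\Phi$ yields the smooth map $\Phi\circ\hat c\colon\mathbb R\times M\to f_2^*TN$ over $\on{pr}_2$ — here we use only the classical chain rule for finite-dimensional manifolds — hence $\Phi\circ\hat c\in\Gamma(\mathbb R\times M;\on{pr}_2^*f_2^*TN)$. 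Applying Lemma \ref{firstlemma} once more, $\Phi\circ\hat c$ is a smooth curve $\mathbb R\to\Gamma(f_2^*TN)$, and by construction it equals $t\mapsto\Phi\circ c(t)=\bigl(u_{f_2}\circ u_{f_1}\i\bigr)\circ c$. Thus the chart change sends smooth curves to smooth curves and is therefore $C^\infty$. Exchanging the roles of $f_1$ and $f_2$ gives smoothness of the inverse, so the two charts are $C^\infty$-compatible.

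The one conceptually delicate point is the very first move — that post-composition with a fibrewise-smooth bundle map is a smooth operation between the section spaces — and this is exactly what Lemma \ref{firstlemma}, cartesian closedness for spaces of sections (itself a consequence of the exponential law of convenient calculus), is built to provide; everything else is either classical finite-dimensional differential geometry or the routine verification that the chart domains are $c^\infty$-open. I would also note that allowing $\Phi$ to depend on an additional smooth parameter changes nothing in the argument, so the same reasoning shows that ``push-forward by a smooth fibre map'' is smooth — the form in which this lemma is used repeatedly in what follows.
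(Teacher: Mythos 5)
Your proposal is correct and follows exactly the route the paper intends: the paper's one-line proof (``Since they map smooth curves to smooth curves'') is precisely your argument, namely checking the curve criterion by converting a smooth curve of sections into a smooth section over $\mathbb R\times M$ via Lemma \ref{firstlemma}, post-composing with the finite-dimensional fibrewise map, and converting back. You have simply written out in full the details that the paper leaves implicit.
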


Since they map smooth curves to smooth curves.
                                                        
\begin{lem} $C^\infty(\mathbb R,C^\infty(M,N))\cong C^\infty(\mathbb R\x M,N)$.
\end{lem}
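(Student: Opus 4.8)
The plan is to reduce the statement to the already-established local description of the manifold $C^\infty(M,N)$ together with the cartesian-closedness Lemma~\ref{firstlemma} and the exponential law of convenient calculus (property 6 of smooth calculus). The key point is that a curve into a manifold is smooth if and only if it is locally smooth in charts, so everything can be checked on the chart domains $U_f$.

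\medskip

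\textbf{Step 1: Unwind what has to be proved.} A map $c\colon\mathbb R\to C^\infty(M,N)$ is smooth (as a curve in the manifold $C^\infty(M,N)$) if and only if for each $t_0$ there is a chart $U_f$ with $c(t)\in U_f$ for $t$ near $t_0$ and $u_f\o c\colon\mathbb R\to\tilde U_f\subset\Ga(f^*TN)$ smooth. On the other side, $c^\vee\colon\mathbb R\x M\to N$ corresponds to $c$ via $c^\vee(t,x)=c(t)(x)$, and I must show $c$ smooth $\iff c^\vee$ smooth. First I would fix $t_0$ and set $f:=c(t_0)$; by the definition of the chart domain and continuity, $c(t)\in U_f$ for $t$ in a neighbourhood $I$ of $t_0$.

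\medskip

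\textbf{Step 2: Transport to the vector-space side.} Under the chart $u_f$, the curve $c|_I$ becomes $s(t):=u_f(c(t))\in\Ga(f^*TN)$, and $c^\vee|_{I\x M}$ becomes, pointwise, $(t,x)\mapsto (\exp^{\bar g}_{f(x)})\i(c^\vee(t,x))$, i.e. it is obtained from $s$ by composing with the fixed smooth fibre diffeomorphism $(\pi_N,\exp^{\bar g})$ of the tubular neighbourhood. Since that diffeomorphism is smooth and independent of $t$, smoothness of $c^\vee|_{I\x M}$ is equivalent to smoothness of the section-valued object $s^\vee\colon I\x M\to f^*TN$, $s^\vee(t,x)=s(t)(x)$. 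So the whole claim is equivalent to: $s\colon I\to\Ga(f^*TN)$ is smooth $\iff s^\vee\colon I\x M\to f^*TN$ is smooth. But this is exactly Lemma~\ref{firstlemma} (after trivializing $f^*TN$ and using the exponential law for $C^\infty(\mathbb R\x M,\mathbb R^k)\cong C^\infty(\mathbb R,C^\infty(M,\mathbb R^k))$). Finally, $\tilde U_f$ is $c^\infty$-open in $\Ga(f^*TN)$, so "smooth into $\tilde U_f$" is the same as "smooth into $\Ga(f^*TN)$ with values in $\tilde U_f$", and the localization in $t_0$ causes no trouble because smoothness of curves is a local condition on $\mathbb R$.

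\medskip

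\textbf{Step 3: Assemble and remark on the obstacle.} Running both implications through Step~2 gives the bijection $C^\infty(\mathbb R,C^\infty(M,N))\cong C^\infty(\mathbb R\x M,N)$; that it is moreover a diffeomorphism of manifolds follows formally once one checks it is a bijection on sets of curves for every parameter space, which is the standard categorical argument (as in property 9 of smooth calculus). The only genuine subtlety — and the place I would be most careful — is the matching of chart domains: I must make sure that the chart $U_f$ chosen at $t_0$ really does contain $c(t)$ for a whole neighbourhood $I$ of $t_0$ and that the pointwise formula for $c^\vee$ lands in the tubular neighbourhood $V^{N\x N}$ for $(t,x)\in I\x M$; compactness of $M$ is what makes this uniform in $x$. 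Everything else is a direct appeal to Lemma~\ref{firstlemma}, the smoothness of chart changes already proved, and the exponential law.
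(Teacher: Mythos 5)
Your proof is correct and follows essentially the same route as the paper, which disposes of the statement with the single line ``By Lemma~\ref{firstlemma}'': you reduce to charts and invoke the identification $C^\infty(\mathbb R,\Ga(M;f^*TN))=\Ga(\mathbb R\x M;\on{pr_2}^*f^*TN)$, merely making explicit the localization in $t$ and the role of compactness of $M$ that the paper leaves implicit.
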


By  lemma \ref{firstlemma}.

\begin{lem} Composition $C^\infty(P,M)\x C^\infty(M,N)\to C^\infty(P,N)$, $(f,g)\mapsto g\o f$, is 
smooth
\end{lem}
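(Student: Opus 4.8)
The plan is to use the characterization of smoothness in the convenient setting: a map into $C^\infty(P,N)$ is smooth if and only if it sends smooth curves to smooth curves, and by the exponential law (Lemma on $C^\infty(\mathbb R, C^\infty(M,N))\cong C^\infty(\mathbb R\x M,N)$, applied with $P$ in place of $M$) this reduces everything to a statement about smooth maps on finite products of $\mathbb R$ with finite-dimensional manifolds. So I would start by taking an arbitrary smooth curve $\mathbb R \to C^\infty(P,M)\x C^\infty(M,N)$, i.e. a pair of smooth curves $t\mapsto f_t \in C^\infty(P,M)$ and $t\mapsto g_t\in C^\infty(M,N)$, and I must show that $t\mapsto g_t\o f_t$ is a smooth curve in $C^\infty(P,N)$.

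First I would invoke the cartesian-closedness lemma (the analogue of Lemma~\ref{firstlemma} and the following lemma, with source manifold $P$) to replace the smooth curve $t\mapsto f_t$ by the associated smooth map $\hat f\colon \mathbb R\x P\to M$, $(t,x)\mapsto f_t(x)$, and similarly $t\mapsto g_t$ by $\hat g\colon \mathbb R\x M\to N$. Then the curve $t\mapsto g_t\o f_t$ corresponds under the exponential law to the map $\mathbb R\x P\to N$ given by $(t,x)\mapsto \hat g(t, \hat f(t,x)) = \hat g\o (\on{pr}_1, \hat f)$. This is a composition of the smooth map $(\on{pr}_1,\hat f)\colon \mathbb R\x P\to \mathbb R\x M$ with the smooth map $\hat g\colon \mathbb R\x M\to N$, hence smooth as a map between (finite-dimensional) manifolds by the ordinary chain rule — here one uses Boman's theorem / property~(1) of smooth calculus to know that finite-dimensional smooth calculus agrees with the convenient one. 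Running the exponential law backwards, $t\mapsto g_t\o f_t$ is therefore a smooth curve in $C^\infty(P,N)$, which is what we needed.

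To actually conclude smoothness of the composition map itself (not just that it preserves smooth curves), I note that the chart structure on $C^\infty(P,M)$, $C^\infty(M,N)$, $C^\infty(P,N)$ makes these convenient manifolds, and a map between convenient manifolds is smooth precisely when its compositions with charts are smooth between the modelling convenient vector spaces; by the very definition of smoothness on $c^\infty$-open subsets of convenient vector spaces, that in turn is exactly the condition that smooth curves are mapped to smooth curves. Since the argument above handles an arbitrary smooth curve into the product (and products of convenient manifolds are again convenient, with smooth curves into a product being pairs of smooth curves), this establishes smoothness of $\on{comp}$.

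The main obstacle — or rather, the one point that needs genuine care rather than formal manipulation — is the bookkeeping in the exponential law when the intermediate manifold $M$ is curved: one must trivialize the relevant pullback bundles (as in the remark after Lemma~\ref{firstlemma}) so that the cartesian-closedness statement applies, and one must check that the associated map $\hat f$ really lands in the chart neighborhoods appropriately so that the chart-wise composition is literally the finite-dimensional composition $\hat g\o(\on{pr}_1,\hat f)$. Once the exponential law is set up, the rest is the trivial observation that composition of smooth maps between finite-dimensional manifolds is smooth, so there is essentially no hard analytic content left — this is exactly the kind of place where convenient calculus buys a short proof.
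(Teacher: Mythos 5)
Your proof is correct and is exactly the paper's argument: the paper's one-line proof ``Since it maps smooth curves to smooth curves'' implicitly invokes the preceding exponential-law lemma $C^\infty(\mathbb R,C^\infty(M,N))\cong C^\infty(\mathbb R\x M,N)$ to reduce to finite-dimensional composition $\hat g\o(\on{pr}_1,\hat f)$, which is precisely what you spell out. No discrepancy; you have simply made the bookkeeping explicit.
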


Since it maps smooth curves to smooth curves.

\begin{coro} The tangent bundle of $C^\infty(M,N)$ is given by 
$$
TC^\infty(M,N)= C^\infty(M,TN) \East{C^\infty(M,\pi_N)}{} C^\infty(M,N).
$$
\end{coro}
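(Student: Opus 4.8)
The plan is to produce an explicit bijection $\Phi\colon TC^\infty(M,N)\to C^\infty(M,TN)$ which is linear on each fibre, covers $\on{id}_{C^\infty(M,N)}$, and is a diffeomorphism; this identifies the tangent bundle projection of $C^\infty(M,N)$ with the smooth map $C^\infty(M,\pi_N)$.

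I would build $\Phi$ from kinematic tangent vectors. A tangent vector at $f\in C^\infty(M,N)$ is an equivalence class $[c]$ of smooth curves $c\colon\mathbb R\to C^\infty(M,N)$ with $c(0)=f$, and by the lemma $C^\infty(\mathbb R,C^\infty(M,N))\cong C^\infty(\mathbb R\x M,N)$ such a $c$ is the same datum as a smooth $\hat c\colon\mathbb R\x M\to N$ with $\hat c(0,\cdot)=f$. Set
$$
\Phi([c])(x):=\p_t\big|_{t=0}\hat c(t,x)\in T_{f(x)}N .
$$
By the chain rule (functoriality of $T$) the assignment $(t,x)\mapsto\p_t\hat c(t,x)$ is a smooth map $\mathbb R\x M\to TN$, so $\Phi([c])\in C^\infty(M,TN)$ and $\pi_N\o\Phi([c])=f$; thus $\Phi([c])$ lies in the fibre of $C^\infty(M,\pi_N)$ over $f$, which we identify with $\Ga(f^*TN)$ (a section of $f^*TN$ being exactly a smooth map $M\to TN$ over $f$).

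Next I would read $\Phi$ through the chart $u_f$. For $g\in U_f$ with $s:=u_f(g)$ and a curve $c$ with $c(0)=g$, differentiating $u_f(c(t))(x)=(\exp^{\bar g}_{f(x)})\i(\hat c(t,x))$ at $t=0$ gives $\big(\tfrac{d}{dt}\big|_0 u_f(c(t))\big)(x)=T\big((\exp^{\bar g}_{f(x)})\i\big)\big|_{g(x)}\cdot\Phi([c])(x)$. Since $\exp^{\bar g}_y(0)=y$ and $D(\exp^{\bar g}_y)|_0=\on{id}$, the differential $D(\exp^{\bar g}_y)\i|_y$ is the identity of $T_yN$; hence at $g=f$ (where $s=0$) this reads $\tfrac{d}{dt}|_0 u_f(c(t))=\Phi([c])$, so on each fibre $\Phi$ is exactly the derivative of the chart $u_f$ at $f$ followed by the above identification — in particular a linear isomorphism $T_fC^\infty(M,N)\to\Ga(f^*TN)$, and $\Phi$ is a fibrewise linear bijection covering the identity. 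More generally, transported to the chart $Tu_f\colon TU_f\to\tilde U_f\x\Ga(f^*TN)$, the same computation shows that $\Phi$ becomes
$$
(s,\sigma)\ \longmapsto\ \Big(x\mapsto T\big(\exp^{\bar g}_{f(x)}\big)\big|_{s(x)}\cdot\sigma(x)\Big),
$$
with inverse $\eta\mapsto\big(x\mapsto T((\exp^{\bar g}_{f(x)})\i)|_{g(x)}\cdot\eta(x)\big)$ over $g=\exp^{\bar g}_f\o s$.

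Finally I would upgrade $\Phi$ to a diffeomorphism. After trivializing $f^*TN$, the displayed transition formula is composition with the fibre derivative of $\exp^{\bar g}$, a fixed smooth map between finite-dimensional manifolds; so, using $C^\infty(\mathbb R,C^\infty(M,TN))\cong C^\infty(\mathbb R\x M,TN)$ together with the composition lemma, both $\Phi$ and $\Phi\i$ carry smooth curves to smooth curves and are therefore smooth. Hence $\Phi$ is a fibrewise linear diffeomorphism with $C^\infty(M,\pi_N)\o\Phi=\pi_{C^\infty(M,N)}$, which is the assertion. The only point demanding genuine care is this last compatibility: that differentiating the $\exp$-based chart construction converts the model spaces $\Ga(f^*TN)$ of $C^\infty(M,N)$ into those of $C^\infty(M,TN)$ smoothly in all variables — equivalently, that the transition formula above really is composition with a smooth finite-dimensional map. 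Everything else is formal bookkeeping within convenient calculus.
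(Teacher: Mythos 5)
Your argument is correct and is essentially the paper's own (the paper merely says ``this follows from the chart structure''): you identify the fibre of the chart at $f$ with $\Ga(f^*TN)$, which is precisely the fibre of $C^\infty(M,\pi_N)$ over $f$, and check that the derivative of the chart construction is composition with $T\exp^{\bar g}$, hence smooth by cartesian closedness. The computations with $D(\exp^{\bar g}_y)|_0=\on{id}$ and the transition formula $(s,\si)\mapsto T(\exp^{\bar g}_{f(x)})|_{s(x)}\cdot\si(x)$ are exactly the details the paper leaves implicit.
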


This follows from the chart structure.

\subsection{Regular Lie groups}
We consider a  smooth Lie group $G$ 
with Lie algebra $\mathfrak g=T_eG$ modelled on convenient 
vector spaces. 
The notion of a regular Lie group is originally due to 
\cite{OmoriMaedaYoshioka80,OmoriMaedaYoshioka81,OmoriMaedaYoshioka81b,OmoriMaedaYoshioka82,OmoriMaedaYoshioka83,OmoriMaedaYoshiokaKobayashi83}
for Fr\'echet Lie groups, was 
weakened and made more transparent by \cite{Milnor84}, and then carried over to convenient Lie 
groups in \cite{KM97r}, see also \cite[38.4]{KrieglMichor97}.
We shall write $\mu:G\x G\to G$ for the mutiplication with $x.y = \mu(x,y) = \mu_x(y) = \mu^y(x)$ 
for left and right translation.

A Lie group $G$
is called {\em regular}  if the following holds:
\begin{itemize}
\item 
For each smooth curve 
$X\in C^{\infty}(\mathbb R,\mathfrak g)$ there exists a curve 
$g\in C^{\infty}(\mathbb R,G)$ whose right logarithmic derivative is $X$, i.e.,
$$
\begin{cases} g(0) &= e \\
\p_t g(t) &= T_e(\mu^{g(t)})X(t) = X(t).g(t)
\end{cases} 
$$
The curve $g$ is uniquely determined by its initial value $g(0)$, if it
exists.
\item
Put $\on{evol}^r_G(X)=g(1)$ where $g$ is the unique solution required above. 
Then $\on{evol}^r_G: C^{\infty}(\mathbb R,\mathfrak g)\to G$ is required to be
$C^{\infty}$ also. We have $\on{Evol}^X_t:= g(t) =\on{evol}_G(tX)$.
\end{itemize}
Up to now, every Lie group modeled on convenient vector spaces is regular.

\begin{thm} For each compact manifold $M$, the diffeomorphism group $\Diff(M)$ is a regular Lie group.
\end{thm}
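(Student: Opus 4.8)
The plan is to establish first that $\Diff(M)$ is a smooth Lie group --- an open submanifold of $C^\infty(M,M)$ --- and then to verify regularity by solving the right-logarithmic-derivative ODE on $M$ and invoking smooth dependence on parameters.

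\textbf{Step 1: The manifold and group structure.}
First I would note that $\Diff(M)$ is open in $C^\infty(M,M)$ in the $c^\infty$-topology: a smooth map $f\colon M\to M$ near the identity in the chart $U_{\on{id}}$ is a diffeomorphism (it is a local diffeomorphism since its derivative is close to the identity, it is bijective by a degree/covering argument on the compact manifold $M$), and being a diffeomorphism is preserved under the chart maps $u_f$ of the previous subsection; more efficiently, $\{f: f \text{ is a local diffeo}\}$ is $c^\infty$-open because it is the preimage of an open set under the smooth map $f\mapsto Tf$ into $C^\infty(TM,TM)$, and the further condition of bijectivity is open as well. Hence $\Diff(M)$ inherits a manifold structure from $C^\infty(M,M)$. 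Smoothness of the multiplication $\mu\colon \Diff(M)\x\Diff(M)\to\Diff(M)$, $(f,g)\mapsto f\o g$, is exactly the composition lemma above. For inversion $\on{inv}\colon f\mapsto f\i$, I would argue by the convenient-calculus criterion: $\on{inv}$ maps smooth curves to smooth curves. Given a smooth curve $t\mapsto f(t)$ in $\Diff(M)$, the corresponding map $F\colon \mathbb R\x M\to M$ is smooth by Lemma \ref{firstlemma}-type cartesian closedness; then $(t,y)\mapsto (t, f(t)\i(y))$ is the inverse of the smooth map $(t,x)\mapsto(t,f(t)(x))$, which is a diffeomorphism of $\mathbb R\x M$ by the finite-dimensional inverse function theorem (its tangent map is everywhere invertible), so it is smooth, and transposing back shows $t\mapsto f(t)\i$ is a smooth curve in $C^\infty(M,M)$, hence in $\Diff(M)$. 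Thus $\Diff(M)$ is a smooth Lie group with Lie algebra $\mathfrak X(M)=C^\infty(M,TM)_{/M}$, the space of smooth vector fields.

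\textbf{Step 2: Regularity.}
Given $X\in C^\infty(\mathbb R,\mathfrak X(M))$, i.e. a time-dependent vector field, which by cartesian closedness is a smooth map $\mathbb R\x M\to TM$ over $M$, I would invoke the classical existence-and-uniqueness theorem for ODEs with smooth (time-dependent) right-hand side on the compact manifold $M$: the flow $\on{Evol}^X_t =: g(t)$ exists for all $t$ (completeness is automatic since $M$ is compact), is unique, depends smoothly on $t$ and on the initial point, and satisfies $g(0)=\on{id}$ and $\p_t g(t) = X(t)\o g(t)$, which is precisely the right logarithmic derivative equation $\p_t g(t) = T(\mu^{g(t)})X(t)$. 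So $g\in C^\infty(\mathbb R,\Diff(M))$ --- here one again checks that $(t,x)\mapsto g(t)(x)$ being smooth $\mathbb R\x M\to M$ gives, via cartesian closedness, a smooth curve in $C^\infty(M,M)$, with image in $\Diff(M)$ because each $g(t)$ is invertible with inverse the flow of the reversed vector field.

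\textbf{Step 3: Smoothness of $\on{evol}^r$.}
Finally I would show $\on{evol}^r\colon C^\infty(\mathbb R,\mathfrak X(M))\to\Diff(M)$, $X\mapsto g(1)$, is smooth. Using convenient calculus it suffices to show it maps a smooth curve $s\mapsto X(s,\cdot,\cdot)$ of time-dependent vector fields to a smooth curve in $\Diff(M)$; by cartesian closedness this amounts to the statement that the solution of $\p_t g(s,t,x) = X(s,t,g(s,t,x))$, $g(s,0,x)=x$, depends smoothly on all variables $(s,t,x)$ jointly --- which is the standard theorem on smooth dependence of ODE solutions on parameters and initial conditions, applied on the compact manifold $M$ (so no escape-to-infinity issues), and then evaluating at $t=1$. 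This is really the only substantive analytic input, and it is classical.

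\textbf{Main obstacle.}
The genuinely delicate points are bookkeeping ones rather than deep analysis: making sure at each stage that ``smooth as a map $\mathbb R\x M\to N$'' is correctly transported to ``smooth curve into $C^\infty(M,N)$'' (this is the content of the lemmas preceding the theorem and must be applied to the flow and its inverse), and checking that $\Diff(M)$ is genuinely $c^\infty$-open in $C^\infty(M,M)$ --- the surjectivity part of ``$f$ is a diffeomorphism'' for $f$ near a given diffeomorphism needs the compactness of $M$ (via a covering-map or degree argument, or via the explicit flow construction showing a whole chart-neighborhood of $\on{id}$ lies in the image of $\on{evol}^r$). Once those are in place, regularity is an immediate repackaging of the classical ODE theory on compact manifolds through the exponential law.
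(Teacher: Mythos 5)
Your proposal is correct and follows essentially the same route as the paper: openness of $\Diff(M)$ in $C^\infty(M,M)$, smoothness of composition by restriction, smoothness of inversion by testing on smooth curves and invoking the finite-dimensional implicit (or inverse) function theorem, and regularity via the flow of the time-dependent vector field together with classical smooth dependence of ODE solutions on parameters, all transported through cartesian closedness. The extra detail you supply on why $\Diff(M)$ is open is a point the paper simply asserts, but it does not change the argument.
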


\begin{proof} $\Diff(M)\East{open}{} C^\infty(M,M)$. Composition is smooth by restriction.
Inversion is smooth: If $t\mapsto f(t,\quad)$ is a smooth curve in $\Diff(M)$, then 
$f(t,\quad)\i$ satisfies the implicit equation $f(t,f(t,\quad)\i(x))=x$, so by the finite 
dimensional implicit function theorem, $(t,x)\mapsto f(t,\quad)\i(x)$ is smooth. So inversion maps 
smooth curves to smooth curves, and is smooth.

Let $X(t,x)$ be a time dependent vector field on $M$ (in $C^\infty(\mathbb R,\X(M))$).
Then $\on{Fl}^{\p_t\x X}_s(t,x)=(t+s,\on{Evol}^X(t,x))$ satisfies the ordinary differential 
equation
$$
\quad\p_t\on{Evol}(t,x) = X(t,\on{Evol}(t,x)).
$$
If $X(s,t,x)\in C^\infty(\mathbb R^2,\X(M))$ is a smooth curve of smooth curves in $\X(M)$,
then obviously the solution of the equation depends smoothly also on the further variable $s$,
thus $\on{evol}$ maps smooth curves of time dependant vector fields to smooth curves of 
diffeomorphism. 
\end{proof}

\subsubsection*{\bf Groups of smooth diffeomorphisms on $\mathbb R^n$}
If we consider the group of all orientation preserving 
diffeomorphisms $\Diff(\mathbb R^n)$ of $\mathbb R^n$, it is not an open subset of 
$C^\infty(\mathbb R^n,\mathbb R^n)$ with the compact $C^\infty$-topology. 
So it is not a smooth manifold in the usual sense, but we may consider it as a Lie group in the 
cartesian closed category of  Fr\"olicher spaces, see \cite[Section 23]{KrieglMichor97}, with the structure 
induced by the injection 
$f\mapsto (f,f\i)\in C^\infty(\mathbb R^n,\mathbb R^n)\x C^\infty(\mathbb R^n,\mathbb R^n)$.
Or one can use the setting of `manifolds' based on smooth curves instead of charts, with lots of 
extra structure (tangent bundle, parallel transport, geodesic structure), described in 
\cite{Michor84a,Michor84b}; this gives a category of smooth `manifolds' where 
those which have Banach spaces as tangent fibes are exactly the usual smooth manifolds modeled on 
Banach spaces, which is cartesian closed: $C^\infty(M,N)$ and $\Diff(M)$ are always `manifolds' for 
`manifolds' $M$ and $N$, and the exponential law holds. 

We shall now describe regular Lie groups in $\Diff(\mathbb R^n)$ 
which are given by 
diffeomorphisms of the form $f = \on{Id}_{\mathbb R} + g$ 
where $g$ is in some specific convenient vector space of bounded functions in 
$C^\infty(\mathbb R^n,\mathbb R^n)$.
Now we discuss these spaces on $\mathbb R^n$, we describe the smooth curves in them, and we describe the 
corresponding groups. These results are from \cite{MichorMumford2013z} and from \cite{KMR14,KMR15d} for the 
more exotic groups.

\subsubsection*{The group $\Diff_{\mathcal B}(\mathbb R^n)$}
The space $\mathcal B(\mathbb R^n)$ (called $\mathcal D_{L^\infty}(\mathbb R^n)$ by  
\cite{Schwartz66}) consists of all smooth functions which have all derivatives (separately) bounded.
It is a Fr\'echet space. 
By \cite{Vogt83}, the space $\mathcal B(\mathbb R^n)$ is linearly 
isomorphic to $\ell^\infty\hat\otimes\, \mathfrak s$ for any completed tensor-product between the 
projective one and the injective one, where $\mathfrak s$ is the nuclear Fr\'echet space of rapidly 
decreasing real sequences. Thus $\mathcal B(\mathbb R^n)$ is not reflexive, not nuclear, not 
smoothly paracompact.
\newline
{\it The space $C^\infty(\mathbb R,\mathcal{B}(\mathbb R^n))$ of smooth
curves in $\mathcal{B}(\mathbb R^n)$ consists of all functions 
$c\in C^\infty(\mathbb R^{n+1},\mathbb R)$ satisfying the following 
property:
\begin{enumerate}
\item[$\bullet$]
For all $k\in \mathbb N_{\ge0}$, $\al\in \mathbb N_{\ge0}^n$ and each $t\in \mathbb R$ the expression
$\p_t^{k}\p^\al_x c(t,x)$  is uniformly bounded in $x\in \mathbb R^n$, locally
in $t$. 
\end{enumerate} 
}
To see this use Theorem \ref{FK} for the set $\{\on{ev}_x: x\in \mathbb R\}$ of point
evaluations in $\mathcal{B}(\mathbb R^n)$. 
Here $\p^\al_x = \frac{\p^{|\al|}}{\p x^\al}$ and $c^k(t)=\p_t^kf(t,\quad)$.
\newline
{\it $\Diff^+_{\mathcal B}(\mathbb R^n)=\bigl\{f=\on{Id}+g: g\in\mathcal B(\mathbb R^n)^n, 
    \det(\mathbb I_n + dg)\ge \ep > 0 \bigr\}$ denotes the corresponding group}, 
                see below.

\subsubsection*{The group $\Diff_{W^{\infty,p}}(\mathbb R^n)$}
For $1\le p <\infty$, the space 
$W^{\infty,p}(\mathbb R^n)=\bigcap_{k\ge 1}L^p_k(\mathbb R^n)$ is the intersection of all $L^p$-Sobolev 
spaces, the space of all smooth functions such that each partial derivative is in $L^p$. It is a reflexive Fr\'echet space.
It is called $\mathcal D_{L^p}(\mathbb R^n)$ in \cite{Schwartz66}.
By \cite{Vogt83}, the space $W^{\infty,p}(\mathbb R^n)$ is linearly isomorphic to 
$\ell^p\hat\otimes\, \mathfrak s$. Thus it is not nuclear, not Schwartz, not Montel, and smoothly 
paracompact only if $p$ is an even integer.
\newline
{\it The space $C^\infty(\mathbb R,H^\infty(\mathbb R^n))$ of smooth
curves in $W^{\infty,p}(\mathbb R^n)$ consists of all functions 
$c\in C^\infty(\mathbb R^{n+1},\mathbb R)$ satisfying the following 
property:
\begin{enumerate}
\item[$\bullet$]
For all $k\in \mathbb N_{\ge0}$, $\al\in \mathbb N_{\ge0}^n$  the expression 
$\|\p_t^{k}\p^\al_xf(t,\quad)\|_{L^p(\mathbb R^n)}$ is locally bounded  
near each $t\in \mathbb R$. 
\end{enumerate} 
}
The proof is literally the same as for $\mathcal B(\mathbb R^n)$, noting that the point evaluations are 
continuous on each Sobolev space $L^p_k$ with $k>\frac np$.
\newline
{\it $\Diff^+_{W^{\infty,p}}(\mathbb R^n)=\bigl\{f=\on{Id}+g: g\in W^{\infty,p}(\mathbb R^n)^n, 
                \det(\mathbb I_n + dg)>0\bigr\}$ 
                denotes the corresponding group}.

\subsubsection*{The group $\Diff_{\mathcal S}(\mathbb R^n)$}
The algebra $\mathcal S(R^n)$  of rapidly decreasing functions is a reflexive nuclear Fr\'echet space.
\newline
{\it The space $C^\infty(\mathbb R,\mathcal S(\mathbb R^n))$ of smooth
curves in $\mathcal S(\mathbb R^n)$ consists of all functions 
$c\in C^\infty(\mathbb R^{n+1},\mathbb R)$ satisfying the following 
property:
\begin{enumerate}
\item[$\bullet$]
For all $k,m\in \mathbb N_{\ge0}$ and $\al\in \mathbb N_{\ge0}^n$,  the expression
$(1+|x|^2)^m\p_t^{k}\p^\al_xc(t,x)$ is uniformly bounded in $x\in \mathbb R^n$, locally uniformly bounded  
in $t\in \mathbb R$.
\end{enumerate} 
}
{\it $\Diff^+_{\mathcal S}(\mathbb R^n)=\bigl\{f=\on{Id}+g: g\in \mathcal S(\mathbb R^n)^n, 
                \det(\mathbb I_n + dg)>0\bigr\}$ is the corresponding group.} 

\subsubsection*{The group  $\Diff_{c}(\mathbb R^n)$}
The algebra $C^\infty_c(\mathbb R^n)$ of all smooth functions with compact support is a nuclear 
(LF)-space.
\newline 
{\it The space $C^\infty(\mathbb R,C^\infty_c(\mathbb R^n))$ of smooth
curves in $C^\infty_c(\mathbb R^n)$ consists of all functions 
$f\in C^\infty(\mathbb R^{n+1},\mathbb R)$ satisfying the following 
property:
\begin{enumerate}
\item[$\bullet$]
For 
each compact interval $[a,b]$ in $\mathbb R$ there exists a compact subset $K\subset \mathbb R^n$
such that $f(t,x)=0$ for  $(t,x)\in [a,b]\x (\mathbb R^n\setminus K)$.
\end{enumerate} 
}
{\it $\Diff_c(\mathbb R^n)=\bigl\{f=\on{Id}+g: g\in C^\infty_c(\mathbb R^n)^n, 
                \det(\mathbb I_n + dg)>0\bigr\}$ is the correponding group.} 
    The case $\Diff_c(\mathbb R^n)$ is well-known since 1980. 

\subsubsection*{Ideal properties of function spaces}

The function spaces discussed are boundedly mapped into each other as follows:
$$\xymatrix{
C^\infty_c(\mathbb R^n) \ar[r]  & \mathcal S(\mathbb R^n)  \ar[r] & W^{\infty,p}(\mathbb R^n) 
\ar[r]^{p\le q} & W^{\infty,q}(\mathbb R^n) \ar[r]
 &  \mathcal B(\mathbb R^n)
}$$
and each space is a bounded locally convex algebra and a bounded $\mathcal B(\mathbb R^n)$-module.
Thus each space is an ideal in each larger space. 

\begin{thm}[\cite{MichorMumford2013z} and \cite{KMR14}]
The sets of diffeomorphisms $\Diff_c(\mathbb R^n)$, 
$\Diff_{\mathcal S}(\mathbb R^n)$,
$\Diff_{H^\infty}(\mathbb R^n)$, and 
$\Diff_{\mathcal B}(\mathbb R^n)$
are all smooth regular Lie groups.
We have the following smooth injective group homomorphisms
$$\xymatrix{
\Diff_c(\mathbb R^n) \ar[r] & \Diff_{\mathcal S}(\mathbb R^n) \ar[r] & \Diff_{W^{\infty,p}}(\mathbb R^n) \ar[r]  
&  \Diff_{\mathcal B}(\mathbb R^n)
}.$$
Each group is a normal subgroup in any other in which it is contained, in particular in 
$\on{Diff}_{\mathcal B}(\mathbb R^n)$.
\end{thm}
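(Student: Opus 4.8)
The plan is to handle all four cases at once, writing $\mathcal A$ for any one of $C^\infty_c(\mathbb R^n)$, $\mathcal S(\mathbb R^n)$, $W^{\infty,p}(\mathbb R^n)$, $\mathcal B(\mathbb R^n)$, and using the three facts recorded just above: each $\mathcal A$ is a convenient vector space with an explicit description (via Theorem~\ref{FK}) of its smooth curves, and each is a bounded locally convex algebra and a bounded $\mathcal B(\mathbb R^n)$-module, hence an ideal in every larger space in the chain. First I would check that $\Diff_{\mathcal A}(\mathbb R^n)$ is $c^\infty$-open in the affine space $\on{Id}+\mathcal A(\mathbb R^n)^n$, hence a smooth manifold modeled on $\mathcal A(\mathbb R^n)^n$ with the single chart $\on{Id}+g\mapsto g$; openness follows because $c^\infty$-convergence in $\mathcal A$ controls $\sup_x|dg(x)|$, so the relevant positivity of $\det(\mathbb I_n+dg)$ (attained on a compact set for $C^\infty_c,\mathcal S,W^{\infty,p}$ since $dg\to0$ at infinity, and uniform by definition for $\mathcal B$) is an open condition. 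It then remains to see that each $f=\on{Id}+g$ is genuinely a diffeomorphism of $\mathbb R^n$: $df=\mathbb I_n+dg$ is everywhere invertible, so $f$ is a local diffeomorphism; boundedness of $g$ makes $f$ proper; and a proper local diffeomorphism of $\mathbb R^n$ is a covering onto its open and closed image $\mathbb R^n$, hence a diffeomorphism because $\mathbb R^n$ is simply connected.

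\textbf{Group operations.} The analytic core is a composition sublemma: for $g\in\mathcal A(\mathbb R^n)$ and $\varphi\in\Diff_{\mathcal B}(\mathbb R^n)$ one has $g\circ\varphi\in\mathcal A(\mathbb R^n)$, and $(g,\varphi)\mapsto g\circ\varphi$ is smooth as a map $\mathcal A(\mathbb R^n)\times\Diff_{\mathcal B}(\mathbb R^n)\to\mathcal A(\mathbb R^n)$. I would prove this on smooth curves, expanding $\p_t^k\p_x^\al\bigl(c(t,x+g_2(t,x))\bigr)$ by the Fa\`a di Bruno formula: each summand is a derivative of $c$ evaluated along the diffeomorphism $\on{Id}+g_2$, times derivatives of $g_2$; the first factor lies in $\mathcal B(\mathbb R^n)$ (all derivatives of $c$ are bounded, with the polynomial weights of $\mathcal S$ and the $L^p$-norms transported using $|x+g_2(x)|\ge|x|-\|g_2\|_\infty$ and the change of variables $\varphi$, whose Jacobian is bounded above and below), the remaining factors lie in $\mathcal A(\mathbb R^n)$ or $\mathcal B(\mathbb R^n)$, and, $\mathcal A(\mathbb R^n)$ being a $\mathcal B(\mathbb R^n)$-module, the product lies in $\mathcal A(\mathbb R^n)$. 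Granting the sublemma, composition $\Diff_{\mathcal A}\times\Diff_{\mathcal A}\to\Diff_{\mathcal A}$, $(\on{Id}+g_1,\on{Id}+g_2)\mapsto\on{Id}+\bigl(g_2+g_1\circ(\on{Id}+g_2)\bigr)$, is well defined and smooth. For inversion, with $f=\on{Id}+g$ (already a diffeomorphism) and $f^{-1}=\on{Id}+h$, $h=-g\circ f^{-1}$, I would first show $f^{-1}\in\Diff_{\mathcal B}(\mathbb R^n)$ by differentiating $f\circ f^{-1}=\on{Id}$, so that $d(f^{-1})=(\mathbb I_n+dg\circ f^{-1})^{-1}$ is bounded and all higher derivatives are bounded by induction; then $h=-g\circ f^{-1}\in\mathcal A(\mathbb R^n)^n$ by the sublemma, and smoothness of inversion follows as in the proof for $\Diff(M)$ — the finite-dimensional implicit function theorem makes $t\mapsto f(t,\cdot)^{-1}$ a smooth curve, and the sublemma with the above bounds places it in $\Diff_{\mathcal A}(\mathbb R^n)$. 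Hence each $\Diff_{\mathcal A}(\mathbb R^n)$ is a smooth Lie group with Lie algebra $\mathcal A(\mathbb R^n)^n$.

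\textbf{Regularity.} For $X\in C^\infty(\mathbb R,\mathcal A(\mathbb R^n)^n)$ I would solve the flow equation $\p_t\on{Evol}(t,x)=X(t,\on{Evol}(t,x))$ with $\on{Evol}(0,x)=x$. Global existence in $t$ is a Gronwall argument: $X(t,\cdot)$ and its first derivative are bounded, so $|\on{Evol}(t,x)-x|\le C|t|$ and there is no blow-up. That $\on{Evol}(t,\cdot)\in\Diff_{\mathcal A}(\mathbb R^n)$ follows by differentiating the flow equation in $x$ and bootstrapping: the $x$-derivatives of $\on{Evol}(t,\cdot)-\on{Id}$ satisfy linear ODEs whose coefficients are derivatives of $X$ along the flow times lower $x$-derivatives of the flow, and the decay, integrability or boundedness characteristic of $\mathcal A(\mathbb R^n)$ is inherited from that of $X$ using again the module property. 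Smooth dependence of solutions of ODEs on parameters then shows that $\on{evol}^r$ maps smooth curves of time-dependent vector fields to smooth curves in $\Diff_{\mathcal A}(\mathbb R^n)$, hence is smooth, and $\p_t\on{Evol}_t=X(t)\circ\on{Evol}_t=T_e(\mu^{\on{Evol}_t})X(t)$ is exactly the required right logarithmic derivative; so each $\Diff_{\mathcal A}(\mathbb R^n)$ is regular.

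\textbf{Inclusions, normality, and the main obstacle.} The bounded linear inclusions $C^\infty_c\hookrightarrow\mathcal S\hookrightarrow W^{\infty,p}\hookrightarrow\mathcal B$ are smooth, hence restrict to smooth maps of the open subsets, and these are plainly group homomorphisms. For normality, let $\mathcal A\subset\mathcal A'$ be two of the spaces, $f=\on{Id}+g\in\Diff_{\mathcal A}(\mathbb R^n)$ and $\varphi=\on{Id}+h\in\Diff_{\mathcal A'}(\mathbb R^n)$; a direct computation gives $\varphi\circ f\circ\varphi^{-1}=\on{Id}+k$ with $k=g\circ\varphi^{-1}+(h\circ f-h)\circ\varphi^{-1}$. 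Here $g\circ\varphi^{-1}\in\mathcal A(\mathbb R^n)^n$ by the sublemma, since $\varphi^{-1}\in\Diff_{\mathcal B}(\mathbb R^n)$; and $h\circ f-h=\int_0^1 dh(\,\cdot+sg)\,g\,ds$ lies in $\mathcal A(\mathbb R^n)^n$ because $dh(\,\cdot+sg)\in\mathcal B(\mathbb R^n)$ uniformly in $s\in[0,1]$, $g\in\mathcal A(\mathbb R^n)$, $\mathcal A(\mathbb R^n)$ is a $\mathcal B(\mathbb R^n)$-module, and the Riemann integral of a smooth curve in $\mathcal A(\mathbb R^n)$ stays there; composing once more with $\varphi^{-1}\in\Diff_{\mathcal B}(\mathbb R^n)$ keeps the result in $\mathcal A(\mathbb R^n)^n$. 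Hence $\varphi\circ f\circ\varphi^{-1}\in\Diff_{\mathcal A}(\mathbb R^n)$, so each group is normal in every larger one, and in particular in $\Diff_{\mathcal B}(\mathbb R^n)$. I expect the composition sublemma — together with the companion fact that inversion stays inside $\Diff_{\mathcal B}(\mathbb R^n)$ — to be the real work, since this is where the ideal and module properties and the case distinctions between $C^\infty_c$, $\mathcal S$, $W^{\infty,p}$ and $\mathcal B$ genuinely enter, through Fa\`a di Bruno bookkeeping and change-of-variables estimates; the global ODE estimates in the regularity step are routine but lengthy, and the $\Diff_c(\mathbb R^n)$ case is classical, immediate once the band-support description of its smooth curves is used.
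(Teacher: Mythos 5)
Your proposal is correct and follows essentially the same route as the paper, which itself gives no proof beyond citing \cite{MichorMumford2013z} and \cite{KMR14} and remarking that the argument ``relies on repeated use of the Fa\`a di Bruno formula'': your composition sublemma (Fa\`a di Bruno bookkeeping combined with the bounded-algebra, $\mathcal B(\mathbb R^n)$-module and ideal properties listed just before the theorem, plus the change-of-variables estimates for the $W^{\infty,p}$ case) is exactly the engine those references use, and the openness, inversion, flow/regularity and conjugation steps are carried out there in the same way. You have correctly identified the sublemma and the stability of $\Diff_{\mathcal B}(\mathbb R^n)$ under inversion as the real work; the rest is as routine as you indicate.
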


The proof of this theorem relies on repeated use of the Fa\`a~di~Bruno formula for higher 
derivatives of composed functions. This offers difficulties on non-compact manifolds, where one 
would need a non-commutative Fa\`a~di~Bruno formula for iterated covariant derivatives.
In the paper \cite{KMR14} many more similar groups are discussed, modeled on spaces of 
Denjoy-Carleman ultradifferentiable functions. It is also showm that for $p>1$ the group
$\Diff_{W^{\infty,p}\cap L^1}(\mathbb R^n)$ is only a topological group with smooth 
right translations --- a property which is similar to the one of finite order Sobolev groups 
$\Diff_{W^{k,p}}(\mathbb R^n)$. Some of these groups were used extensively in 
\cite{MumfordMichor13}. 

\begin{coro}
$\Diff_{\mathcal B}(\mathbb R^n)$ acts on $\Ga_c$, $\Ga_{\mathcal S}$ and $\Ga_{H^\infty}$ of any 
tensorbundle over $\mathbb R^n$ by pullback. The infinitesimal action of the Lie algebra $\X_{\mathcal B}(\mathbb R^n)$ on 
these spaces by the Lie derivative thus maps each of these spaces into itself. 
A fortiori, $\Diff_{H^\infty}(\mathbb R^n)$ acts on $\Ga_{\mathcal S}$ of any tensor bundle by pullback. 
\end{coro}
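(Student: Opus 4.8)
The plan is to trivialize the tensor bundle, write the pullback as an $\on{End}(V)$-valued coefficient times a composition, and then combine three facts already at hand: the group structure of $\Diff_{\mathcal B}(\mathbb R^n)$ established above, the stability of $\mathcal B(\mathbb R^n)$ under $h\mapsto 1/h$ for $h$ bounded below, and the ideal property of the function spaces recorded just before the previous theorem. First I would fix an $(r,s)$-tensor bundle $T^r_s\mathbb R^n\cong\mathbb R^n\x V$ with $V=(\otimes^r\mathbb R^n)\otimes(\otimes^s(\mathbb R^n)^*)$, so that a section is a map $\mathbb R^n\to V$ and $\Ga_c$, $\Ga_{\mathcal S}$, $\Ga_{H^\infty}$ become $(\dim V)$-tuples of functions in $C^\infty_c(\mathbb R^n)$, $\mathcal S(\mathbb R^n)$, $W^{\infty,p}(\mathbb R^n)$. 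For $\phi=\on{Id}+g$ in the group (so $g\in\mathcal B(\mathbb R^n)^n$ and $\det(\mathbb I_n+dg)\ge\ep>0$) the pullback of a section $\si$ reads $(\phi^*\si)(x)=A_\phi(x)\cdot\si(\phi(x))$, where $A_\phi(x)\in\on{End}(V)$ is the tensorial contraction of copies of $d\phi(x)=\mathbb I_n+dg(x)$ and of $d\phi(x)\i=\on{adj}(\mathbb I_n+dg(x))/\det(\mathbb I_n+dg(x))$.

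The first technical point is that $A_\phi$ has all entries in $\mathcal B(\mathbb R^n)$: the entries of $d\phi$ and of $\on{adj}(d\phi)$ are polynomials in the $\p_jg^i\in\mathcal B(\mathbb R^n)$ and hence lie in the algebra $\mathcal B(\mathbb R^n)$, while $1/h\in\mathcal B(\mathbb R^n)$ for $h:=\det(\mathbb I_n+dg)$ because $h\ge\ep>0$ (by definition of the group for $\mathcal B$, and because $h-1$ tends to $0$ at infinity otherwise) and an induction on $|\al|$ writes $\p^\al(1/h)$ as a finite sum of bounded terms $\pm(\prod_i\p^{\be_i}h)/h^{k+1}$; the same holds for $A_{\phi\i}$. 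The second point is that $\si\mapsto\si\o\phi$ preserves each of the three spaces. For $\Ga_c$ this is immediate since $\on{supp}(\si\o\phi)=\phi\i(\on{supp}\si)$ is compact. For $\Ga_{\mathcal S}$ and $\Ga_{H^\infty}$, the Fa\`a~di~Bruno formula expands $\p^\al(\si\o\phi)(x)$ into a finite sum of terms $(\p^\be\si)(\phi(x))\,P_{\al,\be}(\p g(x))$ with $P_{\al,\be}$ a polynomial, hence with $P_{\al,\be}(\p g)$ bounded; for $\mathcal S$ one uses $|\phi(x)|\ge|x|-\|g\|_\infty$ to bound $(1+|x|^2)^m$ by $C_m(1+|\phi(x)|^2)^m$, and for $W^{\infty,p}$ the change of variables $y=\phi(x)$, with $|\det d\phi\i|\le\ep\i$, gives $\|(\p^\be\si)\o\phi\|_{L^p}\le\ep^{-1/p}\|\p^\be\si\|_{L^p}$.

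Assembling: each of $C^\infty_c(\mathbb R^n)$, $\mathcal S(\mathbb R^n)$, $W^{\infty,p}(\mathbb R^n)$ is a $\mathcal B(\mathbb R^n)$-module, so $\phi^*\si=A_\phi\cdot(\si\o\phi)$ again lies in the corresponding space; thus the action is well defined, and $\phi^*\ps^*=(\ps\o\phi)^*$ holds pointwise. Running the same estimates with all constants taken locally uniformly in a curve parameter shows that $(\phi,\si)\mapsto\phi^*\si$ sends smooth curves to smooth curves — here I invoke the explicit descriptions of $C^\infty(\mathbb R,\mathcal B(\mathbb R^n))$, $C^\infty(\mathbb R,\mathcal S(\mathbb R^n))$, $C^\infty(\mathbb R,W^{\infty,p}(\mathbb R^n))$ and $C^\infty(\mathbb R,C^\infty_c(\mathbb R^n))$ recorded above — so the action is smooth. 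Differentiating $t\mapsto(\on{Fl}^X_t)^*\si$ at $t=0$ for $X\in\X_{\mathcal B}(\mathbb R^n)$ then shows $\mathcal L_X$ preserves each space; equivalently, in coordinates $\mathcal L_X\si=\sum_iX^i\p_i\si+B(\p X)\cdot\si$ with $B(\p X)$ having entries in $\mathcal B(\mathbb R^n)$ and $\p_i\si$ in the same space as $\si$, so the module property suffices directly. Finally $\Diff_{H^\infty}(\mathbb R^n)\subseteq\Diff_{\mathcal B}(\mathbb R^n)$ by the smooth injective homomorphism of the previous theorem, and $\Ga_{\mathcal S}$ is among the invariant spaces, which gives the \emph{a fortiori} assertion by restriction.

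I expect the main obstacle to be the combinatorial bookkeeping in the Fa\`a~di~Bruno estimates for $\mathcal S$ and $W^{\infty,p}$, together with verifying that all bounds can be taken locally uniformly in the parameter of a smooth curve (and for the joint $\p_t,\p_x$-derivatives): the conceptual steps — boundedness of the Jacobian coefficient, invariance of each space under composition, the module property — are light, but making the smoothness of the action rigorous needs precisely the estimates that underlie the proof that $\Diff_{\mathcal B}(\mathbb R^n)$ is a regular Lie group.
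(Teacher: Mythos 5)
Your argument is correct and follows the route the paper intends for this corollary: the pullback is written as a $\mathcal B(\mathbb R^n)$-valued coefficient (using that $\det(\mathbb I_n+dg)\ge\ep>0$ to control the inverse Jacobian) applied to a composition, the composition preserves $\Ga_c$, $\Ga_{\mathcal S}$, $\Ga_{W^{\infty,p}}$ by the Fa\`a~di~Bruno estimates underlying the preceding theorem, and the ideal/module property of these spaces over $\mathcal B(\mathbb R^n)$ finishes the pullback and Lie-derivative claims, with the \emph{a fortiori} statement coming from the inclusion $\Diff_{W^{\infty,p}}(\mathbb R^n)\subset\Diff_{\mathcal B}(\mathbb R^n)$. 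The smoothness bookkeeping you defer to the curve descriptions and the regular-Lie-group estimates is exactly where the paper also places it, so nothing essential is missing.
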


\subsection{Manifolds of immersions and shape spaces}

%\subsection{The principal bundle of embeddings}
For finite dimensional manifolds $M$, $N$ with $M$ compact,
$\Emb(M,N)$, the space of embeddings of $M$ into $N$, is open in $C^\infty(M,N)$, so it is a smooth 
manifold. $\Diff(M)$ acts freely and smoothly from the right on $\Emb(M,N)$.

\begin{thm} $\Emb(M,N)\to \Emb(M,N)/\Diff(M) = B(M,N)$ is a principal fiber bundle with structure group 
$\Diff(M)$.
\end{thm}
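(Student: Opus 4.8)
The plan is to build $\Diff(M)$-equivariant local trivialisations of the orbit map $\pi\colon\Emb(M,N)\to B(M,N)$ out of tubular neighbourhoods; this construction simultaneously produces the smooth manifold structure on $B(M,N)$. Fix $f_0\in\Emb(M,N)$, so that $f_0(M)\subset N$ is a compact embedded submanifold. Using the auxiliary metric $\bar g$, let $\on{Nor}(f_0)\to M$ denote the $\bar g$-orthogonal complement of $Tf_0\o TM$ inside the pullback bundle $f_0^*TN$, and let
$$
\tau_{f_0}\colon \on{Nor}(f_0)\supset V \East{\cong}{} W_{f_0}\subset N,\qquad \tau_{f_0}(x,Y_x)=\exp^{\bar g}_{f_0(x)}(Y_x),
$$
be the normal exponential, which after restriction to a suitable open neighbourhood $V$ of the zero section is a diffeomorphism onto an open tubular neighbourhood $W_{f_0}$ of $f_0(M)$; write $p_{f_0}\colon W_{f_0}\to f_0(M)$ for the footpoint projection. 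Set
$$
\tilde Q_{f_0}=\{\,h\in\Ga(\on{Nor}(f_0)): h(M)\subset V,\ \exp^{\bar g}_{f_0}\o h\in\Emb(M,N)\,\},
$$
a $c^\infty$-open neighbourhood of $0$ in $\Ga(\on{Nor}(f_0))$ (open because $h\mapsto\exp^{\bar g}_{f_0}\o h$ is smooth into $C^\infty(M,N)$ and $\Emb(M,N)$ is open there), together with the smooth map $s_{f_0}\colon\tilde Q_{f_0}\to\Emb(M,N)$, $s_{f_0}(h)=\exp^{\bar g}_{f_0}\o h=\tau_{f_0}\o h$, and put $U_{f_0}:=\pi(s_{f_0}(\tilde Q_{f_0}))\subset B(M,N)$.

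Next I would prove the decomposition lemma. Let $\mathcal U_{f_0}=\{f\in\Emb(M,N): f(M)\subset W_{f_0},\ f_0\i\o p_{f_0}\o f\in\Diff(M)\}$; since $M$ is compact and $\Diff(M)$ is open in $C^\infty(M,M)$, this is a $\Diff(M)$-invariant $c^\infty$-open neighbourhood of $f_0$. For $f\in\mathcal U_{f_0}$ put $\varphi(f):=f_0\i\o p_{f_0}\o f\in\Diff(M)$ and $h(f):=\tau_{f_0}\i\o f\o\varphi(f)\i$; the relation $p_{f_0}\o f=f_0\o\varphi(f)$ shows that $h(f)$ is indeed a section of $\on{Nor}(f_0)$ lying in $\tilde Q_{f_0}$, and that
$$
f=s_{f_0}(h(f))\o\varphi(f),\qquad f\in\mathcal U_{f_0}.
$$
Both $f\mapsto\varphi(f)$ and $f\mapsto h(f)$ are smooth, by smoothness of composition, openness of $\Diff(M)$, and smoothness of inversion in the regular Lie group $\Diff(M)$. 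Conversely $(h,\psi)\mapsto s_{f_0}(h)\o\psi$ maps $\tilde Q_{f_0}\x\Diff(M)$ into $\mathcal U_{f_0}$ and is inverse to $f\mapsto(h(f),\varphi(f))$; hence the latter is a diffeomorphism $\mathcal U_{f_0}\cong\tilde Q_{f_0}\x\Diff(M)$. It is equivariant: directly from the definitions, $\varphi(f\o\psi)=\varphi(f)\o\psi$ and $h(f\o\psi)=h(f)$ for $\psi\in\Diff(M)$, so the $\Diff(M)$-orbits inside $\mathcal U_{f_0}$ are precisely the level sets of $h$. In particular $\mathcal U_{f_0}=\pi\i(U_{f_0})$ and $\pi\o s_{f_0}\colon\tilde Q_{f_0}\to U_{f_0}$ is a bijection (injectivity: if $\pi(s_{f_0}(h_1))=\pi(s_{f_0}(h_2))$ then, since $\varphi(s_{f_0}(h_i))=\on{Id}_M$ and $h(s_{f_0}(h_i))=h_i$, equivariance forces $h_1=h_2$).

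Now I would assemble the global structure. The $U_{f_0}$ cover $B(M,N)$, and $\pi$ is an open map — if $O\subset\Emb(M,N)$ is open then $\pi\i(\pi(O))=\bigcup_{\psi\in\Diff(M)}O\o\psi$ is open — so each $U_{f_0}$ is $c^\infty$-open. Declare the bijections $u_{f_0}:=(\pi\o s_{f_0})\i\colon U_{f_0}\to\tilde Q_{f_0}\subset\Ga(\on{Nor}(f_0))$ to be charts. On an overlap, the chart change $u_{f_1}\o u_{f_0}\i$ is $h\mapsto\tau_{f_1}\i\o s_{f_0}(h)\o\varphi_{f_1}(s_{f_0}(h))\i$ in the above notation, which is smooth because it is built only from composition, the exponential map of $\bar g$, footpoint projections of fixed tubular neighbourhoods, and inversion in $\Diff(M)$ — exactly the ingredients of the chart-change lemma for $C^\infty(M,N)$ proved earlier. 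This gives $B(M,N)$ a smooth manifold structure, $s_{f_0}\o u_{f_0}$ is then a smooth local section of $\pi$, and the maps $f\mapsto(\pi(f),\varphi(f))$ become fibre-preserving diffeomorphisms $\pi\i(U_{f_0})\East{\cong}{}U_{f_0}\x\Diff(M)$ intertwining the right $\Diff(M)$-action on $\Emb(M,N)$ with the action on the second factor. Thus $\pi\colon\Emb(M,N)\to B(M,N)$ is a principal $\Diff(M)$-bundle.

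The main obstacle is the decomposition lemma of the second paragraph: one must verify that the reparametrisation $\varphi(f)=f_0\i\o p_{f_0}\o f$ actually lands in $\Diff(M)$ for every $f$ in a $c^\infty$-open neighbourhood of $f_0$ — this is where the tubular neighbourhood geometry and the openness of $\Diff(M)$ in $C^\infty(M,M)$ enter — and that $f\mapsto(h(f),\varphi(f))$ maps smooth curves to smooth curves, which rests on smoothness of composition, smoothness of inversion in the regular Lie group $\Diff(M)$, and the exponential law of Section~\ref{calc}. Everything else — openness of the various sets, equivariance, injectivity of $\pi\o s_{f_0}$, smoothness of the chart changes — is then formal.
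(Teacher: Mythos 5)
Your proposal is correct and follows essentially the same route as the paper: the paper's (very terse) proof uses exactly the tubular neighbourhood $\on{Nor}(f(M))\East{\exp^{\bar g}}{\cong}W_{f(M)}\East{p_{f(M)}}{}f(M)$ and the decomposition $\ph(g)=f\i\o p_{f(M)}\o g\in\Diff(M)$, $g\o\ph(g)\i\in\Ga(f^*\on{Nor}(f(M)))$, which is your $(h(f),\varphi(f))$. You have merely written out in full the verifications (openness, equivariance, smoothness of the chart changes) that the paper leaves implicit.
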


Note that $B(M,N)$ is the smooth  manifold of all submanifolds of $N$ which are of diffeomorphism 
type $M$. Therefore it is also called the {\it nonlinear Grassmannian} in \cite{G-BV14}, where this 
theorem is extended to the case when $M$ has boundary. From another point of view, $B(M,N)$ is 
called the {\it differentiable Chow variety} in \cite{Michor127}. It is an example of a {\it shape 
space}.

\begin{proof} We use an auxiliary Riemannian metric $\bar g$ on $N$. Given $f\in \Emb(M,N)$, we 
view $f(M)$ as a submanifold of $N$ and we split the the tangent bundle of $N$ along $f(M)$ as 
$TN|_{f(M)}= \on{Nor}(f(M))\oplus Tf(M)$. The exponential mapping describes a tubular neighborhood 
of $f(M)$ via
$$
\on{Nor}(f(M))\East{\exp^{\bar g}}{\cong} W_{f(M)}\East{p_{f(M)}}{} f(M).
$$
If $g:M\to N$ is $C^1$-near to $f$, then $\ph(g):=f\i\o p_{f(M)}\o g\in \Diff(M)$ and we may 
consider 
$g\o \ph(g)\i\in \Ga(f^*W_{f(M)}) \subset \Ga(f^*\on{Nor}(f(M)))$. 
This is the required local splitting.
\end{proof}

%\subsection{The orbifold bundle of immersions}
$\Imm(M,N)$, the space of immersions $M\to N$, is open in $C^\infty(M,N)$, and is thus a smooth 
manifold. The regular Lie group $\Diff(M)$ acts smoothly from the right, but no longer freely. 

\begin{thm}[\cite{CerveraMascaroMichor91}]
For an immersion $f:M\to N$, the isotropy group 
$$
\Diff(M)_f=\{\ph\in\Diff(M): f\o \phi= f\}
$$
is always a finite group, acting freely on $M$; so $M\East{p}{} M/\Diff(M)_f$ is a 
covering of manifolds and 
$f$ factors to $f=\bar f\o p$. 

Thus $\Imm(M,N)\to \Imm(M,N)/\Diff(M) =: B_i(M,N)$ is a projection onto an infinite dimensional 
orbifold.
\end{thm}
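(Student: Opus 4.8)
The plan is to prove the three assertions of the theorem in turn: finiteness of the isotropy group, freeness of its action on $M$, and then the orbifold structure on the quotient.

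\textbf{Step 1: The isotropy group is finite.} Suppose $\varphi \in \Diff(M)$ satisfies $f \circ \varphi = f$. Differentiating gives $Tf \circ T\varphi = Tf$, and since $f$ is an immersion $Tf$ is fiberwise injective, so $T\varphi = \on{Id}$ on the image, forcing $T_x\varphi$ to be the identity at every point $x$ that is a fixed point of $\varphi$. More usefully: if $\varphi(x_0) = x_0$ for some $x_0$, then in a chart around $x_0$ the immersion condition lets us write $f$ locally as a graph, and $f \circ \varphi = f$ together with injectivity of $Tf$ forces $\varphi = \on{Id}$ near $x_0$; hence the fixed point set of $\varphi$ is open, and being closed and $M$ connected (we may assume $M$ connected, treating components separately — or rather, the argument localizes), either $\varphi = \on{Id}$ or $\varphi$ has no fixed points. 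To get finiteness one argues that the isotropy group acts properly discontinuously: pick $x_0 \in M$; the orbit $\{\varphi(x_0) : \varphi \in \Diff(M)_f\}$ lies in the fiber $f^{-1}(f(x_0))$, which is finite because $f$ is an immersion of a compact manifold (the fiber is a closed discrete subset of compact $M$). If two elements $\varphi, \psi$ agree at $x_0$, then $\psi^{-1}\varphi$ fixes $x_0$, hence equals $\on{Id}$ by the fixed-point argument; so $\varphi \mapsto \varphi(x_0)$ is injective from $\Diff(M)_f$ into a finite set, proving $\Diff(M)_f$ is finite.

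\textbf{Step 2: Freeness and the covering.} The computation just given shows precisely that any $\varphi \in \Diff(M)_f$ with a fixed point is the identity, which is exactly freeness of the $\Diff(M)_f$-action on $M$. A finite group acting freely on a manifold yields a covering map $p : M \to M/\Diff(M)_f$, and since $f$ is constant on $\Diff(M)_f$-orbits it descends to $\bar f : M/\Diff(M)_f \to N$ with $f = \bar f \circ p$; moreover $\bar f$ is again an immersion since $p$ is a local diffeomorphism.

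\textbf{Step 3: The orbifold structure on $B_i(M,N)$.} Here I would adapt the principal-bundle construction from the proof of the preceding theorem on $\Emb(M,N)$. Fix $f \in \Imm(M,N)$; locally near $f$ the immersion is an embedding on small pieces, and one can still build, via the normal exponential map of an auxiliary metric $\bar g$ and the tubular-neighborhood trick, a local section-type chart: for $g$ $C^1$-near $f$ one splits $g = $ (normal part) $\circ$ (reparametrization), but now the reparametrization is only well-defined up to $\Diff(M)_f$. This exhibits a neighborhood of the orbit $f \cdot \Diff(M)$ in $\Imm(M,N)$ as $\Diff(M) \times_{\Diff(M)_f} (\text{slice})$, so the quotient $B_i(M,N)$ is locally modeled on $(\text{slice})/\Diff(M)_f$ — a convenient vector space modulo a finite group — which is exactly the definition of an (infinite-dimensional) orbifold chart. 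One must check the charts are compatible and that the finite groups of nearby orbits embed into $\Diff(M)_f$; this uses that isotropy can only drop (be a subgroup) under small perturbation.

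\textbf{Main obstacle.} The genuinely delicate point is Step 3: constructing the slice and verifying it really gives orbifold charts. The subtlety is that $\Diff(M)$ no longer acts freely, so one cannot simply invoke the principal-bundle theorem; one must produce a $\Diff(M)_f$-invariant slice transverse to the orbit and show the isotropy of every point in the slice is contained in (a conjugate of) $\Diff(M)_f$. Controlling how the finite isotropy groups vary — i.e.\ semicontinuity of the isotropy and the existence of a common finite group into which all nearby ones embed — is the technical heart, and is where the reference \cite{CerveraMascaroMichor91} does the real work. The finiteness argument in Steps 1--2, by contrast, is elementary once one exploits fiberwise injectivity of $Tf$ and compactness of $M$.
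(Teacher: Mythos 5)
The paper offers no proof of this theorem: it is imported wholesale from \cite{CerveraMascaroMichor91}, so there is no internal argument to compare against. Judged on their own, your Steps 1 and 2 are correct and are in fact exactly the argument of that reference: the fixed-point set of $\varphi\in\Diff(M)_f$ is closed, and open because $f$ is locally injective (if $\varphi(x_0)=x_0$ and $f$ is injective on a neighbourhood $U$ of $x_0$, then for $y$ close to $x_0$ both $y$ and $\varphi(y)$ lie in $U$ and $f(\varphi(y))=f(y)$ forces $\varphi(y)=y$), so $\varphi=\on{Id}$ by connectedness; and $\varphi\mapsto\varphi(x_0)$ then injects $\Diff(M)_f$ into the finite set $f^{-1}(f(x_0))$. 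One correction: connectedness of $M$ is not a removable convenience here, and your hedge ``treating components separately'' does not rescue freeness. Take $M=S^1\sqcup S^1$ with $f$ an embedding on the first circle and the double cover $z\mapsto z^2$ onto a circle on the second; then the map which is the identity on the first component and $z\mapsto -z$ on the second lies in $\Diff(M)_f$, is not the identity, yet fixes every point of the first circle. So freeness genuinely fails for disconnected $M$; the theorem requires $M$ connected and you should state that hypothesis rather than wave at it.

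Step 3 is the right outline but is not a proof, as you yourself acknowledge. What must actually be supplied is the slice: a $C^1$-small $\Diff(M)_f$-invariant neighbourhood $Q$ of $0$ in the space of sections of the normal bundle $f^*TN/Tf(TM)$ over $M$ (realized as the $\bar g$-orthogonal complement inside $f^*TN$), together with the statements that every $g$ near $f$ factors uniquely as $g=(\exp^{\bar g}_f\circ s)\circ\varphi$ with $s\in Q$ and $\varphi\in\Diff(M)$, that the isotropy group of $\exp^{\bar g}_f\circ s$ is the stabilizer of $s$ inside $\Diff(M)_f$ (the semicontinuity you mention), and hence that a neighbourhood of the orbit is $\Diff(M)$-equivariantly isomorphic to $Q\times_{\Diff(M)_f}\Diff(M)$. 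Note that the tubular-neighbourhood argument from the embedding case does not transfer verbatim: $f(M)$ is not a submanifold, so the retraction $p_{f(M)}$ does not exist globally, and one must instead work over $M$ with the pullback normal bundle, using compactness and local injectivity of $f$ to get a uniform injectivity radius. That construction is precisely where \cite{CerveraMascaroMichor91} does its work, and without it the orbifold assertion remains unproven in your write-up.
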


The space $B_i(M,N)$ is another example of a {\it shape space}. It appeared in the form of 
$B_i(S^1,\mathbb R^2)$, the shape space of plane immersed curves, in \cite{Michor2006c} and 
\cite{Michor2007}.

\section{Weak Riemannian manifolds}

If an infinite dimensional manifold is not modeled on a Hilbert space, then a Riemannian metric 
cannot describe the topology on each tangent space. We have to deal with more complicated 
situations.

\subsection{Manifolds, vector fields, differential forms}
Let $M$ be a smooth manifold modeled on convenient vector spaces. Tangent vectors to 
$M$ are kinematic ones. 

The reason for this is that eventually we want flows of vector 
fields, and that there are too many derivations in 
infinite dimensions, even on a Hilbert space $H$: Let $\al\in L(H,H)$ be a continuous linear 
functional which vanishes on the subspace of compact operators, thus also on $H\otimes H$. Then 
$f\mapsto \al(d^2f(0))$ is a derivation at 0 on $C^\infty(H)$, since 
$\al(d^2(f.g)(0))=\al\big(d^2f(0).g(0)+df(0)\otimes dg(0) + dg(0)\otimes df(0) + f(0).d^2g(0)\big)$
and $\al$ vanishes on the two middle terms. There are even non-zero derivations which differentiate 3 times, 
see \cite[28.4]{KrieglMichor97}. 

The (kinematic) tangent bundle $TM$ is then a smooth vector bundle as usual. 
Differential forms of degree $k$ are then smooth sections of the bundle 
$L^k_{\text{skew}}(TM;\mathbb R)$ of skew symmetric 
$k$-linear functionals on the tangent bundle, since this is the only version which admits exterior 
derivative, Lie derivatives along vector field, and pullbacks along arbitray smooth mappings; 
see \cite[33.21]{KrieglMichor97}. The de~Rham cohomology equals singular cohomology with real 
coefficients if the manifold is smoothly paracompact; see \cite [Section 34]{KrieglMichor97}.
If a vector field admits a flow, then each integral curve is 
uniquely given as a flow line; see \cite[32.14]{KrieglMichor97}.

\subsection{Weak Riemannian manifolds}
Let  $(M,g)$  be  a smooth manifold
modeled on  convenient locally convex vector spaces. A smooth Riemannian metric on $M$ is called weak if 
$g_x:T_xM\to  T_x^*M$ is only injective for each $x\in M$. 
The  image $g(TM)\subset  T^*M$  is  called  the {\em smooth cotangent
bundle}  associated  to  $g$.  
Then  $g\i$ is the metric on the
smooth  cotangent bundle as well as the morphism $g(TM)\to TM$.
We have a special class of 1-forms
$\Om_g^1(M):=  \Ga(g(TM))$ for which the musical mappings makes sense: $\al^\sharp=g\i\al\in\X(M)$ 
and $X^\flat  =  gX$. These 1-forms separate points on $TM$.
The exterior derivative is 
defined by
$d:\Om_g^1(M) \to \Om^2(M)=\Ga(L^2_{\text{skew}}(TM;\mathbb R))$
since the embedding $g(TM)\subset T^*M$ is a smooth fiber linear mapping. 

Existence of the Levi-Civita covariant derivative is equivalent to:
{\em The metric itself admits {\it symmetric} gradients with
respect to itself.}
Locally this means:
If $M$ is $c^\infty$-open in a
convenient vector space $V_M$. Then:
$$
D_{x,X}g_x(X,Y) = g_x(X,\on{grad}_1g(x)(X,Y)) 
= g_x(\on{grad}_2g(x)(X,X),Y)
$$
where $D_{x,X}$ denote the directional derivative at $x$ in the direction $X$, and where
the mappings $\on{grad}_1g$ and $\on{sym}\on{grad}_2g: M\x V_M\x V_M\to V_M$, given by
$(x,X)\mapsto \on{grad}_{1,2}g(x)(X,X)$, are smooth and quadratic in $X\in V_M$.

\subsection{Weak Riemannian metrics on spaces of immersions}
For a compact manifold $M$ and a finite dimensional Riemannian manifold $(N,\bar g)$ we can consider the 
following weak Riemannian metrics on the manifold $\Imm(M,N)$ of smooth immersions $M\to N$:
\begin{align*}
G^0_f(h,k) &= \int_M \bar g(h,k) \vol(f^*\bar g) &\quad&\text{the  }L^2\text{-metric,}
\\
G^s_f(h,k) &= \int_M \bar g((1+\De^{f^*\bar g})^s h,k) \vol(f^*\bar g) &\quad&\text{a Sobolev 
metric of order }s,
\\
G^\ph_f(h,g) &= \int_M \Ph(f) \bar g(h,k) \vol(f^*\bar g)  & &\text{an almost local metric.}
\end{align*}
Here $\vol(f^*\bar g)$ is the volume density on $M$ of the pullback metric $g=f^*\bar g$, 
where $\De^{f^*\bar g}$ is the (Bochner) Laplacion with respect to $g$ and $\bar g$ acting on 
sections of $f^*TN$,
and where $\Ph(f)$ is a positive function of the total volume $\on{Vol}(f^*g)=\int_M \vol(f^*g)$, 
of the scalar curvature $\on{Scal}(f^*\bar g)$, and of the mean curvature $\on{Tr}(S^f)$, $S^f$ 
being the second fundamental form. See \cite{Bauer2011b}, \cite{Bauer2012a} for more information.
All these metrics are invariant for the right action of the reparameterization group $\Diff(M)$, so 
they descend to metrics on shape space $B_i(M,N)$ (off the singularities) such that the 
projection $\Imm(M,N)\to B_i(M,N)$ is a Riemannian submersion of a benign type: the $G$-orthogonal 
component to the tangent space to the $\Diff(M)$-orbit consists always of smooth vector fields. 
So there is no need to use the notion of robust weak Riemannian metrics discussed below. 

\begin{thm}
%\begin{enumerate}
%\item
1. Geodesic distance on $\Imm(M,N)$, defined as the infimum of path-lenghts of smooth isotopies between two 
immersions, vanishes for the $L^2$-metric $G^0$.

%\item
2. Geodesic distance is positive on $B_i(M,N)$ for the almost local metric $G^\Ph$ if 
$\Ph(f)\ge 1+ A\on{Tr}(S^F)$, or if $\Ph(f)\ge A\on{Vol}(f^*\bar g)$, for some $A>0$.

%\item
3. Geodesic distance is positive on $B_i(M,N)$ for the Sobolev metric $G^s$ if $s\ge 1$.

%\item
4. The geodesic equation is locally well-posed on $\Imm(M,N)$ for the Sobolev metric $G^s$ if 
$s\ge 1$, and globally well-posed (and thus geodesically complete) on $\Imm(S^1,\mathbb R^n)$, if 
$s\ge 2$.
%\end{enumerate}
\end{thm}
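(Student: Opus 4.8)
The plan is to treat the four assertions separately, since they rely on quite different circles of ideas; each was proved in a separate paper, and the task here is to indicate the mechanism.

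\textbf{Part 1 (vanishing of geodesic distance for $G^0$).} The strategy I would use is the one of Michor--Mumford: given two immersions $f_0,f_1$ that are isotopic, one builds an explicit isotopy whose $G^0$-length is arbitrarily small. The key observation is that the $L^2$-metric $G^0_f(h,h)=\int_M \bar g(h,h)\vol(f^*\bar g)$ does \emph{not} penalize tangential jiggling: if one moves $f$ by a normal field $h=\phi\cdot\nu$ where $\phi$ oscillates rapidly in $M$, the volume density $\vol(f^*\bar g)$ stays bounded while one can route a deformation along a path that physically shrinks the ``area seen'' by the integrand. Concretely, first reduce to the case where $f_1$ is a small normal graph over $f_0$ by splitting any isotopy into short pieces; then deform $f_0$ through immersions with many small, sharp ``fingers'' or corrugations so that the intermediate surfaces have small induced volume where the motion is large, make the actual homotopy across in a time-interval of length $\epsilon$, and un-corrugate. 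Estimating the three contributions (corrugate, cross, un-corrugate) shows each is $O(\sqrt\epsilon)$ or can be made small independently, so the infimum of lengths is $0$. The technical heart is the corrugation estimate, i.e.\ controlling $\int \bar g(\partial_t f,\partial_t f)\vol(f_t^*\bar g)$ along the family of increasingly wrinkled immersions; this is where one must be careful that high-frequency wrinkling keeps $\|\partial_t f\|^2$ times the (shrinking) volume element integrable and small.

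\textbf{Parts 2 and 3 (positivity of geodesic distance on $B_i$ for $G^\Phi$ and for $G^s$, $s\ge1$).} Here the plan is to exhibit a \emph{Lipschitz} function on $B_i(M,N)$ (or a continuous functional that is Lipschitz for the given Riemannian length metric) which separates the two given shapes, forcing the geodesic distance between them to be at least the difference of its values. For $G^s$ with $s\ge1$, the natural choice is (a power of) the total volume $\on{Vol}(f^*\bar g)=\int_M\vol(f^*\bar g)$, or more robustly a suitable $H^1$-type quantity such as $\int_M |df|\,$-based energy: one differentiates $\on{Vol}$ along a path and bounds $|\partial_t \on{Vol}|$ by a constant times $\sqrt{G^s_f(\partial_t f,\partial_t f)}$, using that the first variation of the volume form involves one derivative of $h$ (the divergence/mean-curvature term), which is controlled by the $H^1$-part of $G^s$ when $s\ge1$ but \emph{not} by $G^0$ — this is exactly why $s\ge1$ is the threshold. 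For the almost-local metric $G^\Phi$, under either hypothesis $\Phi(f)\ge 1+A\on{Tr}(S^f)$ or $\Phi(f)\ge A\on{Vol}(f^*\bar g)$, the weight $\Phi$ itself supplies the missing curvature or volume control, so the same first-variation-of-volume computation goes through with $\Phi$ absorbing the dangerous term; again one gets $|\partial_t(\text{some geometric functional})|\le C\sqrt{G^\Phi}$. All of this is manifestly $\Diff(M)$-invariant, so it descends to $B_i(M,N)$. The main obstacle in this part is packaging the first variation of $\vol(f^*\bar g)$ and of $\on{Vol}$ cleanly and getting the constant in the Cauchy--Schwarz step to be uniform enough along a path of immersions that the estimate survives passage to the infimum; on the shape space one also has to check the orbifold singularities cause no trouble, which is fine because the Riemannian submersion $\Imm\to B_i$ has the benign orthogonality property noted above.

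\textbf{Part 4 (local and global well-posedness of the geodesic equation for $G^s$).} The approach is the standard Ebin--Marsden-style argument adapted to these Sobolev metrics. First write the geodesic equation on $\Imm(M,N)$ as a second-order ODE $\partial_{tt} f=\Gamma_f(\partial_t f,\partial_t f)$ by computing the spray (the Christoffel symbols) of $G^s$; the point is that for $s\ge1$ the operator $(1+\Delta^{f^*\bar g})^s$ is an invertible (pseudo)differential operator of positive order, its inverse smooths, and the metric and its gradients are smooth maps in the convenient-calculus sense — so $\Gamma$ is a smooth section of a suitable bundle, hence by the theorem on existence of integral curves of vector fields on convenient manifolds one gets local existence and uniqueness, and smooth dependence on initial data; the completeness-of-the-metric/invertibility issue is exactly where $s\ge1$ is needed so that the inertia operator is genuinely elliptic of positive order. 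For global well-posedness on $\Imm(S^1,\mathbb R^n)$ with $s\ge2$, one upgrades to no-loss-no-gain estimates: one shows that along a geodesic the higher Sobolev norms do not blow up in finite time, typically by conserved quantities (the momentum $\on{Ad}^*$-type conservation coming from the reparameterization and Euclidean symmetries) together with energy conservation and a Gronwall argument controlling $\|f\|_{H^{s}}$ in terms of lower norms, so geodesics extend for all time, which by Hopf--Rinow-type reasoning in this setting gives geodesic completeness. The hard part of Part 4, and indeed the subtlest point of the whole theorem, is the higher-order a-priori estimate for the global statement: one must show that the ``bad'' terms in the evolution of the top Sobolev norm — the ones where all derivatives land on a single factor — either cancel or are of lower order after commuting $(1+\Delta)^s$ through the nonlinearity, which is a delicate commutator/tame-estimate computation specific to $n=1$ and $s\ge2$.
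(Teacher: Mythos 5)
The paper itself contains no proof of this theorem: it is a survey statement and all four parts are delegated to the cited references (\cite{Michor2005,Michor2006c,Bauer2012c} for 1, \cite{Bauer2012a} for 2, \cite{Bauer2011b} for 3, \cite{Bruveris2014,Bruveris2014b} for 4), so the comparison is between your sketches and that literature. Your mechanisms for parts 1 and 4 are essentially the right ones: in part 1 the point is exactly the imbalance you describe (the $G^0$-norm of a motion supported on a set of small induced volume scales like $\sqrt{\text{volume}}\times\text{speed}$ while the displacement scales like $\text{speed}\times\text{time}$), and in part 4 the Ebin--Marsden scheme (smooth spray on Sobolev completions, local existence there, no-loss-no-gain, and for the global statement Gronwall-type a priori bounds on the relevant norms along a geodesic) is what is actually done. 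One ingredient is missing from your part 1: the claim is vanishing on $\Imm(M,N)$, not only on $B_i(M,N)$, and your corrugation construction only moves the image. To connect $f$ with $f\o\ph$ for $\ph\in\Diff(M)$ one also needs vanishing of geodesic distance for the right-invariant $L^2$-metric on $\Diff(M)$ (the other half of \cite{Michor2005}); the two statements are then combined through the fibration $\Imm\to B_i$ as in \cite{Bauer2012c}.

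The genuine gap is in parts 2 and 3. A Lipschitz functional such as $\on{Vol}$ or $\sqrt{\on{Vol}}$ separates only those pairs of shapes on which it takes different values, and two distinct submanifolds can perfectly well have equal volume (and equal values of any finite list of geometric invariants); so ``exhibit a Lipschitz function separating the two shapes'' cannot be the whole argument. The proofs in \cite{Michor2006c,Bauer2011b,Bauer2012a} instead use a \emph{path} functional, the volume swept out,
$\int_a^b\int_M\|(\p_tf)^\bot\|_{\bar g}\,\vol(f_t^*\bar g)\,dt$
where $(\p_tf)^\bot$ is the component normal to the immersion. By Cauchy--Schwarz this is bounded above by $\int_a^b\sqrt{\on{Vol}(f_t^*\bar g)}\,\sqrt{G_{f_t}(\p_tf,\p_tf)}\,dt$, and it is bounded \emph{below} by a positive constant depending only on the endpoints whenever $f_0(M)\neq f_1(M)$ as subsets, since the path must sweep through a small ball around a point lying on one image but not the other. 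The Lipschitz estimate for $\sqrt{\on{Vol}}$ that you correctly identify --- coming from the first variation of the volume form, whose mean-curvature term is controlled by the $H^1$-part of $G^s$ when $s\ge1$, or absorbed by the weight $\Phi$ under either stated hypothesis --- enters only as the auxiliary step guaranteeing that $\on{Vol}(f_t^*\bar g)$ stays bounded along any path of controlled length, so that the Cauchy--Schwarz constant does not degenerate. Without the swept-volume lower bound your argument proves nothing for two distinct shapes of equal volume.
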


{1} is due to  \cite{Michor2006c} for $B_i(S^1,\mathbb R^2)$, to \cite{Michor2005} for $B_i(M,N)$ and for 
$\Diff(M)$, which combines to the result for $\Imm(M,N)$ as noted in \cite{Bauer2012c}. {2} is proved 
in \cite{Bauer2012a}. For {3} see \cite{Bauer2011b}.
{4} is due to \cite{Bruveris2014} and \cite{Bruveris2014b}.

\subsection{Weak right invariant Riemannian metrics on regular Lie groups}\label{wRmLie}
Let $G$ be a regular convenient Lie group, with Lie algebra $\mathfrak g$. 
Let $\mu:G\x G\to G$ be the group multiplication, $\mu_x$ the left
translation and $\mu^y$ the right translation, 
$\mu_x(y)=\mu^y(x)=xy=\mu(x,y)$. 
Let $L,R:\mathfrak g\to \X(G)$ be the left- and right-invariant vector field mappings, given by 
$L_X(g)=T_e(\mu_g).X$ and $R_X=T_e(\mu^g).X$, respectively.
They are related by $L_X(g)=R_{\on{Ad}(g)X}(g)$.
Their flows are given by 
\begin{align*}
\on{Fl}^{L_X}_t(g)&= g.\exp(tX)=\mu^{\exp(tX)}(g),
\\
\on{Fl}^{R_X}_t(g)&= \exp(tX).g=\mu_{\exp(tX)}(g).
\end{align*}
The right
Maurer--Cartan form $\ka=\ka^r\in\Om^1(G,\mathfrak g)$ is given by  
$\ka_x(\xi):=T_x(\mu^{x\i})\cdot \xi$. 
\newline
The left
Maurer--Cartan form $\ka^l\in\Om^1(G,\mathfrak g)$ is given by  
$\ka_x(\xi):=T_x(\mu_{x\i})\cdot \xi$. 
\newline
{\it $\ka^r$ satisfies the left Maurer-Cartan equation 
$d\ka-\tfrac12[\ka,\ka]^\wedge_{\mathfrak g}=0$}, 
where
$[\quad,\quad]_\wedge$ denotes the wedge product of $\mathfrak g$-valued forms on
$G$ induced by the Lie bracket. Note that
$\tfrac12[\ka,\ka]_\wedge (\xi,\et) = [\ka(\xi),\ka(\et)]$.
\newline
{\it $\ka^l$ satisfies the right Maurer-Cartan equation 
  $d\ka+\tfrac12[\ka,\ka]^\wedge_{\mathfrak g}=0$}. 

Namely, evaluate $d\ka^r$ on right invariant vector fields $R_X,R_Y$ for $X,Y\in{\mathfrak g}$.
\begin{multline*}
(d\ka^r)(R_X,R_Y) = R_X(\ka^r(R_Y)) - R_Y(\ka^r(R_X)) - \ka^r([R_X,R_Y])
\\
= R_X(Y) - R_Y(X) + [X,Y] = 0-0 +[\ka^r(R_X),\ka^r(R_Y)].
\end{multline*}

The (exterior) derivative of the function $\on{Ad}:G\to GL(\mathfrak g)$ can be 
expressed by
\begin{displaymath}
d\on{Ad} = \on{Ad}.(\on{ad}\o\ka^l) = (\on{ad}\o \ka^r).\on{Ad}
\end{displaymath}
since we have
$d\on{Ad}(T\mu_g.X) = \frac d{dt}|_0 \on{Ad}(g.\exp(tX))
= \on{Ad}(g).\on{ad}(\ka^l(T\mu_g.X))$.

Let $\ga=\mathfrak g\x\mathfrak g\to\mathbb R$ be a positive-definite bounded (weak) inner product. Then  
\begin{equation*}
\ga_x(\xi,\et)=\ga\big( T(\mu^{x\i})\cdot\xi,\, T(\mu^{x\i})\cdot\et\big) =  
     \ga\big(\ka(\xi),\,\ka(\et)\big)
\end{equation*}
is a right-invariant (weak) Riemannian metric on $G$
and any (weak) right-invariant bounded Riemannian metric is of this form, for
suitable  $\ga$.
Denote by $\check\ga:\mathfrak g\to \mathfrak g^*$ the mapping induced by $\ga$, 
from the 
Lie algebra into its dual (of bounded linear functionals) 
and by 
$\langle \al, X \rangle_{\mathfrak g}$ the duality evaluation between $\al\in\mathfrak g^*$ and 
$X\in \mathfrak g$.
 
Let $g:[a,b]\to G$ be a smooth curve.  
The velocity field of $g$, viewed in the right trivializations, 
coincides  with the right logarithmic derivative
\begin{displaymath}
\de^r(g)=T(\mu^{g\i})\cdot \partial_t g =  
\ka(\partial_t g) = (g^*\ka)(\partial_t).
\end{displaymath}
The energy of the curve $g(t)$ is given by  
\begin{equation*}
E(g) = \frac12\int_a^b\ga_g(g',g')dt = \frac12\int_a^b 
     \ga\big( (g^*\ka)(\partial_t),(g^*\ka)(\partial_t)\big)\, dt. 
\end{equation*}
For a variation $g(s,t)$ with fixed endpoints we then use that
$$d(g^*\ka)(\p_t,\p_s)=\p_t(g^*\ka(\p_s))-\p_s(g^*\ka(\p_t))-0,$$ partial integration, and the
left Maurer--Cartan equation to obtain
\begin{align*} 
&\partial_sE(g) = \frac12\int_a^b2 
     \ga\big( \partial_s(g^*\ka)(\partial_t),\,
                            (g^*\ka)(\partial_t)\big)\, dt
\\&
= \int_a^b \ga\big( \partial_t(g^*\ka)(\partial_s) - 
         d(g^*\ka)(\partial_t,\partial_s),\,
                (g^*\ka)(\partial_t)\big)\,dt
\\&
= -\int_a^b \ga\big( (g^*\ka)(\partial_s),\,\partial_t(g^*\ka)(\partial_t)\big)\,dt 
\\&\qquad\qquad
  - \int_a^b \ga\big( [(g^*\ka)(\partial_t),(g^*\ka)(\partial_s)],\, (g^*\ka)(\partial_t)\big)\, dt
\\&
= -\int_a^b \big\langle\check\ga(\partial_t(g^*\ka)(\partial_t)),\,
  (g^*\ka)(\partial_s)\big\rangle_{\mathfrak g} \, dt
\\&\qquad\qquad
  - \int_a^b \big\langle  \check\ga((g^*\ka)(\partial_t)),\, 
  \on{ad}_{(g^*\ka)(\partial_t)}(g^*\ka)(\partial_s)\big\rangle_{\mathfrak g} \, dt
\\&
= -\int_a^b 
\big\langle\check\ga(\partial_t(g^*\ka)(\partial_t)) + (\on{ad}_{(g^*\ka)(\partial_t)})^{*}\check\ga((g^*\ka)(\partial_t)),\, 
  (g^*\ka)(\partial_s)\big\rangle_{\mathfrak g} \, dt.
\end{align*}
Thus the curve $g(0,t)$ is critical for the energy if and only if
$$
\check\ga(\partial_t(g^*\ka)(\partial_t)) + 
(\on{ad}_{(g^*\ka)(\partial_t)})^{*}\check\ga((g^*\ka)(\partial_t)) = 0.
$$
In terms of the right logarithmic derivative $u:[a,b]\to \mathfrak g$ of  
$g:[a,b]\to G$, given by  
$u(t):= g^*\ka(\partial_t) = T_{g(t)}(\mu^{g(t)\i})\cdot g'(t)$, 
the {\em geodesic equation} has the expression
\begin{equation*}
\boxed{\quad
\p_t u = - \,\check\ga\i\on{ad}(u)^{*}\;\check\ga(u)\quad} 
\end{equation*}
Thus the geodesic equation exists in general if and only if 
$\on{ad}(X)^{*}\check\ga(X)$ is in the image of $\check\ga:\mathfrak g\to\mathfrak g^*$, i.e.,
\begin{equation*}
\on{ad}(X)^{*}\check\ga(X) \in \check\ga(\mathfrak g)
\end{equation*}
 for every 
$X\in\mathfrak X$. This condition then leads to the existence of the  
Christoffel symbols. Arnold in \cite{Arnold66} asked for the more restrictive condition  
$\on{ad}(X)^{*}\check\ga(Y) \in \check \ga(\mathfrak g)$ for all $X,Y\in\mathfrak g$.
The geodesic equation for the {\it momentum} $p:=\ga(u)$ is
$$
p_t = - \on{ad}(\check\ga\i(p))^*p.
$$
There are situations, see theorem \ref{Rmap} or \cite{BBM14b}, where 
only the more general condition is satisfied, but where
the usual 
transpose $\on{ad}^\top(X)$ of $\on{ad}(X)$, 
\begin{equation*}
\on{ad}^\top(X) := \check\ga\i\o\on{ad}_X^*\o\; \check\ga
\end{equation*}
does not exist for all $X$. 

We describe now the {\em covariant derivative} and the {\em curvature}.
The right trivialization $(\pi_G,\ka^r):TG\to G\x{\mathfrak g}$  
induces the isomorphism $R:C^\infty(G,{\mathfrak g})\to \X(G)$, given by  
$R(X)(x):= R_X(x):=T_e(\mu^x)\cdot X(x)$, for $X\in C^\infty(G,{\mathfrak g})$ and
$x\in G$. Here $\X(G):=\Ga(TG)$ denotes the Lie algebra of all vector 
fields. For the Lie bracket and the Riemannian metric we have 
\begin{align*} 
[R_X,R_Y] &= R(-[X,Y]_{\mathfrak g} + dY\cdot R_X - dX\cdot R_Y),
\\
R\i[R_X,R_Y] &= -[X,Y]_{\mathfrak g} + R_X(Y) - R_Y(X),
\\
\ga_x(R_X(x),R_Y(x)) &= \ga( X(x),Y(x))\,,\, x\in G. 
\end{align*}
In what follows, we shall perform all computations in $C^\infty(G,{\mathfrak g})$ instead of 
$\X(G)$. In particular, we shall use the convention
\begin{displaymath}
\nabla_XY := R\i(\nabla_{R_X}R_Y)\quad\text{ for }X,Y\in C^\infty(G,{\mathfrak g})
\end{displaymath}
to express the Levi-Civita covariant derivative. 
 
\begin{lem}
{\rm \cite[3.3]{BBM14b}}
Assume that for all $\xi\in{\mathfrak g}$ the element $\on{ad}(\xi)^*\check\ga(\xi)\in\mathfrak g^*$ is in the image of
$\check\ga:\mathfrak g\to\mathfrak g^*$   
and that  
$\xi\mapsto \check\ga\i\on{ad}(\xi)^*\check\ga(\xi)$ is bounded  quadratic (or, equivalently, smooth). 
Then the Levi-Civita covariant derivative of the metric $\ga$ 
exists and is given for any $X,Y \in C^\infty(G,{\mathfrak g})$  in
terms of the isomorphism $R$ by
\begin{equation*}
\nabla_XY= dY.R_X + \rh(X)Y - \frac12\on{ad}(X)Y,
\end{equation*}
where 
\[
\rh(\xi)\et = \tfrac14\check\ga\i\big(\on{ad}_{\xi+\et}^*\check\ga(\xi+\et) - \on{ad}_{\xi-\et}^*\check\ga(\xi-\et)\big) = \tfrac12\check\ga\i\big(\on{ad}_\xi^*\check\ga(\et) + \on{ad}_\et^*\check\ga(\xi)\big)
\]
is the polarized version. 
$\rh:{\mathfrak g}\to L({\mathfrak g},{\mathfrak g})$ is bounded, and we have $\rh(\xi)\et=\rh(\et)\xi$.  
We also have
\begin{gather*}
\ga\big(\rh(\xi)\et,\ze\big) = \frac12\ga(\xi,\on{ad}(\et)\ze) + \frac12\ga(\et,\on{ad}(\xi)\ze),
\\
\ga(\rh(\xi)\et,\ze) + \ga(\rh(\et)\ze,\xi) + \ga(\rh(\ze)\xi,\xi) = 0.
\end{gather*}
\end{lem}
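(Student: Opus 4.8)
The plan is to extract the formula for $\nabla$ from the Koszul six-term identity, regard the resulting right-hand side as a $1$-form tested against an arbitrary field, and use the hypothesis precisely to recognise that $1$-form as $\check\ga$ applied to an element of $C^\infty(G,{\mathfrak g})$; metric compatibility, freedom from torsion, and uniqueness are then comparatively soft. I would work throughout in $C^\infty(G,{\mathfrak g})$ via the isomorphism $R$, using the displayed identities $\ga_x(R_X(x),R_Y(x))=\ga(X(x),Y(x))$ and $R\i[R_X,R_Y]=-[X,Y]_{\mathfrak g}+dY.R_X-dX.R_Y$.

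First I would note that, $\ga$ being bounded bilinear and hence smooth, $R_X\big(\ga(Y,Z)\big)=\ga(dY.R_X,Z)+\ga(Y,dZ.R_X)$. Substituting this and the bracket formula into
\begin{multline*}
2\ga(\nabla_XY,Z)=R_X\ga(Y,Z)+R_Y\ga(X,Z)-R_Z\ga(X,Y)
\\+\ga([R_X,R_Y],Z)-\ga([R_X,R_Z],Y)-\ga([R_Y,R_Z],X),
\end{multline*}
a routine check shows that all derivative terms except $2\ga(dY.R_X,Z)$ cancel in pairs, leaving the bracket contribution $-\ga(\on{ad}(X)Y,Z)+\ga(\on{ad}(X)Z,Y)+\ga(\on{ad}(Y)Z,X)$. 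Rewriting $\ga(\on{ad}(X)Z,Y)=\langle\on{ad}(X)^{*}\check\ga(Y),Z\rangle_{\mathfrak g}$ and $\ga(\on{ad}(Y)Z,X)=\langle\on{ad}(Y)^{*}\check\ga(X),Z\rangle_{\mathfrak g}$, this becomes
$$2\ga(\nabla_XY,Z)=\big\langle 2\check\ga(dY.R_X)-\check\ga(\on{ad}(X)Y)+\on{ad}(X)^{*}\check\ga(Y)+\on{ad}(Y)^{*}\check\ga(X),\ Z\big\rangle_{\mathfrak g}.$$

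Here is the crux. By hypothesis $\on{ad}(\xi)^{*}\check\ga(\xi)\in\check\ga({\mathfrak g})$ for each $\xi$, so polarization gives $\on{ad}(\xi)^{*}\check\ga(\et)+\on{ad}(\et)^{*}\check\ga(\xi)=\tfrac12\big(\on{ad}_{\xi+\et}^{*}\check\ga(\xi+\et)-\on{ad}_{\xi-\et}^{*}\check\ga(\xi-\et)\big)\in\check\ga({\mathfrak g})$ as well; and since $\xi\mapsto\check\ga\i\on{ad}(\xi)^{*}\check\ga(\xi)$ is bounded quadratic, its polarization $\rh(\xi)\et:=\tfrac12\check\ga\i\big(\on{ad}_\xi^{*}\check\ga(\et)+\on{ad}_\et^{*}\check\ga(\xi)\big)$ is a bounded, hence smooth, symmetric bilinear map ${\mathfrak g}\x{\mathfrak g}\to{\mathfrak g}$; this also yields $\rh(\xi)\et=\rh(\et)\xi$ and the two stated expressions for $\rh$. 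Thus the right-hand side above equals $\langle\check\ga\big(2dY.R_X+2\rh(X)Y-\on{ad}(X)Y\big),Z\rangle_{\mathfrak g}$, and since the $1$-forms $\check\ga(Z)$ separate points on each tangent space I may read off $\nabla_XY=dY.R_X+\rh(X)Y-\tfrac12\on{ad}(X)Y$. This visibly takes values in $C^\infty(G,{\mathfrak g})$, is $C^\infty(G)$-linear in $X$, and obeys $\nabla_X(fY)=(R_Xf)Y+f\nabla_XY$, so it defines a covariant derivative.

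It remains to check that this $\nabla$ is torsion-free and metric, and to record the auxiliary identities. Torsion vanishes because $\rh(X)Y=\rh(Y)X$ and $-\tfrac12\on{ad}(X)Y+\tfrac12\on{ad}(Y)X=-[X,Y]_{\mathfrak g}$, whence $\nabla_XY-\nabla_YX=dY.R_X-dX.R_Y-[X,Y]_{\mathfrak g}=R\i[R_X,R_Y]$. For compatibility, from the definition of $\rh$ and $\langle\on{ad}(\al)^{*}\check\ga(\be),\ze\rangle_{\mathfrak g}=\ga(\be,\on{ad}(\al)\ze)$ one gets $\ga(\rh(\xi)\et,\ze)=\tfrac12\ga(\xi,\on{ad}(\et)\ze)+\tfrac12\ga(\et,\on{ad}(\xi)\ze)$; specializing and using antisymmetry of $\on{ad}$ gives $\ga(\rh(X)Y,Z)+\ga(Y,\rh(X)Z)=\tfrac12\ga(\on{ad}(X)Y,Z)+\tfrac12\ga(Y,\on{ad}(X)Z)$, which is exactly what turns $R_X\ga(Y,Z)$ into $\ga(\nabla_XY,Z)+\ga(Y,\nabla_XZ)$. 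The cyclic identity for $\rh$ then follows by adding three cyclic copies of the formula for $\ga(\rh(\cdot)\cdot,\cdot)$ and cancelling via $\on{ad}(\al)\be+\on{ad}(\be)\al=0$. Uniqueness of $\nabla$ is the standard consequence of the Koszul formula together with point separation. I expect the one genuinely delicate point to be exactly the step where the Koszul identity, which a priori pins down only the bounded functional $Z\mapsto\ga(\nabla_XY,Z)$, is upgraded to a bona fide smooth ${\mathfrak g}$-valued field; this is where the hypothesis on $\on{ad}(\xi)^{*}\check\ga(\xi)$ is indispensable, everything else being formal once $\rh$ is known to be well defined and smooth.
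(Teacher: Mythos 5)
Your proof is correct and follows essentially the same route as the cited source \cite[3.3]{BBM14b} (the paper itself states the lemma without proof): derive the candidate formula from the Koszul identity written in the right trivialization, use the quadratic hypothesis and polarization to recognize the resulting functional as $\check\gamma$ of a smooth $\mathfrak g$-valued field, and then verify torsion-freeness and metric compatibility, with uniqueness from weak nondegeneracy. The only remark worth adding is that the paper's final displayed identity contains a typo --- the last term should read $\gamma(\rho(\zeta)\xi,\eta)$ --- and your cyclic-sum argument proves exactly that corrected version.
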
 

%\begin{proof}
%It is easily checked that $\nabla$ is a covariant derivative. The Riemannian
%metric is covariantly constant since
%\begin{align*}
%R_X\ga( Y,Z) &= \ga( dY.R_X,Z) + \ga( Y,dZ.R_X)
%=\ga( \nabla_XY,Z) + \ga( Y,\nabla_XZ).  
%\end{align*}
%Since $\rh$ is symmetric, the connection is also torsion-free:
%\begin{displaymath}
%\nabla_XY-\nabla_YX + [X,Y]_{\mathfrak g} - dY.R_X + dX.R_Y = 0.
%\end{displaymath}
%\end{proof}

For $X,Y\in C^\infty(G,{\mathfrak g})$ we have
$$ 
[R_X,\on{ad}(Y)] = \on{ad}(R_X(Y))\quad\text{  and }\quad  
[R_X,\rh(Y)] = \rh(R_X(Y)).
$$
The {\em Riemannian curvature} is then computed by  
\begin{align*} 
\mathcal{R}(X,Y) &=  
[\nabla_X,\nabla_Y]-\nabla_{-[X,Y]_{\mathfrak g}+R_X(Y)-R_Y(X)}
\\&
= [R_X+\rh_X-\tfrac12\on{ad}_X, 
     R_Y+\rh_Y-\tfrac12\on{ad}_Y]
\\&\quad
- R({-[X,Y]_{\mathfrak g} + R_X(Y) - R_Y(X)}) 
-\rh({-[X,Y]_{\mathfrak g} + R_X(Y)\! - R_Y(X)})
\\&\quad
+\frac12\on{ad}(-[X,Y]_{\mathfrak g} + R_X(Y)\! - R_Y(X)) 
\\&
= [\rh_X,\rh_Y] +\rh_{[X,Y]_{\mathfrak g}} -\frac12[\rh_X,\on{ad}_Y] +\frac12[\rh_Y,\on{ad}_X] 
-\frac14\on{ad}_{[X,Y]_{\mathfrak g}}.
\end{align*}
which is visibly a tensor field.

For the numerator of the sectional curvature we obtain
\begin{align*}
\ga\big(\mathcal R(&X,Y)X,Y\big) = \ga(\rh_X\rh_YX,Y) - \ga(\rh_Y\rh_XX,Y) + \ga(\rh_{[X,Y]}X,Y)
\\&\quad
-\frac12\ga(\rh_X[Y,X],Y) + \frac12\ga([Y,\rh_XX],Y) 
\\&\quad
+0 - \frac12\ga([X,\rh_YX],Y) -\frac14\ga([[X,Y],X],Y)
\\&
= \ga(\rh_XX,\rh_YY) - \|\rh_XY\|_\ga^2 + \frac34\|[X,Y]\|_\ga^2  
\\&\quad
-\frac12\ga(X,[Y,[X,Y]]) + \frac12\ga(Y,[X,[X,Y]])
\\&
= \ga(\rh_XX,\rh_YY) - \|\rh_XY\|_\ga^2 + \frac34\|[X,Y]\|_\ga^2  
\\&\quad
-\ga(\rh_XY,[X,Y]]) + \ga(Y,[X,[X,Y]]).
\end{align*}
If the adjoint $\on{ad}(X)^\top:\mathfrak g\to \mathfrak g$ exists, this is easily seen to coincide with 
Arnold's original formula \cite{Arnold66},
\begin{align*}
\ga(\mathcal R(X,Y)X,Y) =& 
- \frac14\|\on{ad}(X)^\top Y+\on{ad}(Y)^\top X\|^2_\ga
+ \ga(\on{ad}(X)^\top X,\on{ad}(Y)^\top Y)   
\\&
+ \frac12\ga(\on{ad}(X)^\top Y-\on{ad}(Y)^\top X,\on{ad}(X)Y)  
+ \frac34\|[X,Y]\|_\ga^2.
\end{align*}

\subsection{Weak right invariant Riemannian metrics on diffeomorphism groups}\label{wRmDiff}
Let $N$ be a finite dimensional manifold. We consider the following regular Lie groups:
$\on{Diff}(N)$, the  group of all diffeomorphisms of $N$ if $N$ is compact. 
$\on{Diff}_c(N)$, the group of diffeomorphisms with compact support, if $N$ is not compact.
If $N=\mathbb R^n$, we also may consider one of the following:
$\on{Diff}_{\mathcal S}(\mathbb R^n)$, the group of all diffeomorphisms which fall rapidly to the identity.
$\on{Diff}_{W^{\infty,p}}(\mathbb R^n)$, the group of all diffeomorphisms which are modelled on the space 
$W^{\infty,p}(\mathbb R^n)^n$, the intersection of all $W^{k,p}$-Sobolev spaces of vector fields.
The last type of groups works also for a {\it Riemannian manifold of bounded geometry} 
$(N,\bar g)$; see \cite{Eichhorn2007} for Sobolev spaces on them.  
In the following we write $\Diff_{\mathcal A}(N)$ for any of these groups.
The Lie algebras are the spaces $\X_{\mathcal A}(N)$ of vector fields,
where $\mathcal A\in \{C^\infty_c, \mathcal S, W^{\infty,p}\}$, 
with the negative of the usual bracket as Lie bracket.

A right invariant weak inner product on $\Diff_{\mathcal A}(N)$ is given by a smooth positive definite 
inner product $\ga$ on the Lie algebra $\X_{\mathcal A}(N)$ which is described by the operator 
$L=\check \ga:\X_{\mathcal A}(N)\to \X_{\mathcal A}(N)'$ and we shall denote its inverse by 
$K=L\i:L(\X_{\mathcal A}(N))\to \X_{\mathcal A}(N)$. Under suitable conditions on $L$ (like 
an elliptic coercive (pseudo) differential operator of high enough order) the operator $K$ turns 
out to be the reproducing kernel of a Hilbert space of vector fields which is contained in the 
space of either $C^1_b$ (bounded $C^1$ with respect to $\bar g$) or $C^2_b$ vector fields. 
See \cite[Chapter 12]{Younes2010}, \cite{Micheli2013}, and \cite{MumfordMichor13} for uses of 
the reproducing Hilbert space approach.  
The right invariant metric is then 
defined as in \ref{wRmLie}, where $\langle \;,\; \rangle_{\X_{\mathcal A}(N)}$  is the duality:
$$
G^L_\ph(X\o\ph, Y\o\ph) = G^L_{\on{Id}}(X,Y) = \ga(X,Y)  =  \langle L(X),Y 
\rangle_{\X_{\mathcal A}(N)}.
$$
Mis-using the notation for $L$ we will often also write  
$$
G^L_{\on{Id}}(X,Y) = \int_N \bar g(LX,Y)\vol(\bar g).
$$
Examples of metrics are:
\begin{align*}
G^0_{\on{Id}}(X,Y) &= \int_N \bar g(X,Y) \vol(\bar g) &\qquad&\text{the  }L^2\text{  metric,}
\\
G^s_{\on{Id}}(X,Y) &= \int_N \bar g((1+\De^{\bar g})^s X,Y) \vol(\bar g) &\qquad&
\text{a Sobolev metric of order }s,
\\
G^{\dot H^1}_{\on{Id}}(X,Y) &= \int_{\mathbb R} X'.Y' dx = -\int_{\mathbb R} X''Y\,dx &&
\text{where  }X,Y\in \X_{\mathcal A}(\mathbb R).
\end{align*}
The geodesic equation on $\Diff_{\mathcal A}(N)$. As explained in \ref{wRmLie}, the geodesic 
equation is given as follws:
Let $\ph:[a,b]\to \Diff_{\mathcal A}(N)$ be a smooth curve.
In terms of its right logarithmic derivative $u:[a,b]\to \X_{\mathcal A}(N)$, 
$u(t):= \ph^*\ka(\partial_t) = \ph'(t)\o \ph(t)\i$, 
the geodesic equation is
$$
L(u_t) = L(\p_t u)= - \on{ad}(u)^* L(u).
$$
The {\em condition for the existence of the geodesic equation} is as follows:
$$
X\mapsto K(\on{ad}(X)^*L(X))
$$
is bounded quadratic $\X_{\mathcal A}(N)\to \X_{\mathcal A}(N)$.
Using  {\it  Lie derivatives}, the computation of $\on{ad}_X^*$ is
especially  simple.  Namely,  for  any  section  
$\om$ of $T^*N \otimes  \on{vol}$   and vector fields 
$\xi,\et \in \X_{\mathcal A}(N)$, we have:
$$ \int_N (\om, [\xi,\et]) = \int_N (\om, \L_\xi(\et)) = 
-\int_N(\L_\xi(\om),\et),$$ 
hence $\on{ad}^*_\xi(\om) = +\L_\xi(\om)$. 
Thus the Hamiltonian version of the geodesic equation on the smooth dual 
$L(\X_{\mathcal A}(N))\subset \Ga_{C^2_b}(T^*N\otimes \on{vol})$ becomes
$$
\p_t\al  = - \on{ad}^*_{K(\al)}\al = - \L_{K(\al)}\al,
$$
or, keeping track of everything,
\begin{align*}
\p_t\ph &= u\o \ph, &\quad& 
\p_t\al = - \L_u\al &&
u = K(\al) = \al^\sharp,&&\al=L(u) = u^\flat.
\end{align*}

\begin{thm}
Geodesic distance vanishes on $\Diff_{\mathcal A}(N)$ for any Sobolev metric of order 
$s<\frac12$. If $N=S^1\x C$ with $C$ compact, then geodesic distance vanishes also for $s=\frac12$.
It also vanises for the $L^2$-metric on the Virasoro group 
$\mathbb R\rtimes \Diff_{\mathcal A}(\mathbb R)$.

Geodesic distance is positive on $\Diff_{\mathcal A}(N)$ for any Sobolev metric of order 
$s\ge1$. If $\dim(N)=1$ then geodesic distance is also positive for $s>\frac12$.
\end{thm}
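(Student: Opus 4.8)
The plan is to split the statement into its vanishing and positivity parts and treat each by its own mechanism, following the circle of ideas around the Michor--Mumford phenomenon.

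\textbf{Vanishing of geodesic distance for low-order Sobolev metrics.}
First I would recall the strategy that worked for the $L^2$-metric on $\Diff_{\mathcal A}(N)$ and on $B_i(M,N)$: one exhibits, between any two diffeomorphisms in the same connected component, a path whose energy can be made arbitrarily small. The key device is that of a ``rapidly oscillating'' displacement: given a target diffeomorphism reached by the flow of a time-dependent vector field $u(t)$, one modulates $u$ by a highly oscillatory factor in one variable (say, composing with a high-frequency shear along an $S^1$-direction, which is why the sharp exponent $s=\tfrac12$ appears precisely on $N=S^1\times C$). The $H^s$-norm of the modulated field grows like (frequency)$^s$ while the total displacement needed to compensate (and the time available) scale so that the accumulated path-length scales like (frequency)$^{s-1/2}$ up to the volume of the oscillation support, which one also sends to zero; hence for $s<\tfrac12$ the length tends to $0$, and a careful borderline estimate gives $s=\tfrac12$ when there is a free circle factor. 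I would organize this as: (i) reduce to connecting $\on{Id}$ to an arbitrary element of the identity component by concatenating short paths; (ii) for a single ``building block'' diffeomorphism, write down the explicit oscillatory family $\ph_\nu(t)$ and estimate $\int \sqrt{G^s(\dot\ph_\nu,\dot\ph_\nu)}\,dt$; (iii) let $\nu\to\infty$. For the Virasoro group one adds the Bott cocycle term and checks that it does not obstruct the estimate in the $L^2$-case. The details of (ii) are where one must be careful about how $\De^{\bar g}$ interacts with the metric $f^*\bar g$ being deformed, but since the fields are supported in small tubes one can work in local coordinates and treat $\De$ as the flat Laplacian up to lower-order, uniformly controlled, corrections.

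\textbf{Positivity of geodesic distance for $s\ge 1$ (and $s>\tfrac12$ in dimension one).}
Here the idea is to produce a \emph{continuous} (for a coarse topology) lower bound for path-length that is strictly positive whenever the endpoints differ. The standard route is to find a functional $\Phi$ on $\Diff_{\mathcal A}(N)$ that is Lipschitz with respect to $G^s$ --- i.e. $|\Phi(\ph_1)-\Phi(\ph_0)|\le C\cdot\on{dist}_{G^s}(\ph_0,\ph_1)$ --- and that separates the given pair of diffeomorphisms. For $s\ge 1$ one can take $\Phi$ built from the pullback volume density or from an $H^1$-type Sobolev norm of $\ph$ itself: differentiating along a path and using Cauchy--Schwarz plus the algebra/module properties of $\mathcal A$ recorded in the ``ideal properties'' diagram, one bounds $\frac{d}{dt}\Phi(\ph(t))$ by a constant times $\sqrt{G^s_{\ph(t)}(\dot\ph,\dot\ph)}$, the constant being uniform because the relevant quantities stay in a bounded set of $\mathcal B(\mathbb R^n)$ along any finite-length path. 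Integrating gives the Lipschitz estimate, hence positivity of geodesic distance. In dimension one the extra room down to $s>\tfrac12$ comes from the Sobolev embedding $H^s(S^1)\hookrightarrow C^0$ holding for $s>\tfrac12$, which is exactly what is needed to control sup-norm-type separating functionals; below $\tfrac12$ this embedding fails and the vanishing construction takes over, so the two halves of the theorem meet precisely at $s=\tfrac12$.

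\textbf{Main obstacle.}
I expect the genuinely delicate point to be the borderline case $s=\tfrac12$ on $N=S^1\times C$ in the vanishing half: the naive scaling gives a path-length bounded but not obviously tending to zero, so one must gain an extra factor either by also shrinking the support of the oscillation (using the compact factor $C$) or by a logarithmic refinement of the frequency estimate, and one must verify that the full Sobolev metric --- not just its leading symbol --- still obeys the bound, which requires controlling the deformation of $\De^{f^*\bar g}$ and of $\vol(f^*\bar g)$ along the oscillatory family uniformly in $\nu$. Everything else (the concatenation reduction, the Lipschitz estimates for $s\ge1$, the one-dimensional Sobolev-embedding argument, and the Virasoro modification) is routine given the convenient-calculus framework and the function-space facts already assembled in Section~\ref{mf}.
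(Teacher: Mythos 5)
First, a point of comparison: the paper does not prove this theorem at all --- it is a survey statement, disposed of with the single sentence ``This proved in [Bauer--Bruveris--Harms--Michor 2013], [Bauer--Bruveris--Michor 2013], and [Bauer et al.\ 2012]''. So your sketch can only be measured against the arguments in those cited papers, and there it has the right overall architecture and the right threshold scaling, but the two central mechanisms are mis-identified in ways that matter.

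For the vanishing half, the construction in the cited papers is not an oscillatory modulation of the generating field $u$ but a \emph{concentration} construction: one connects $\on{Id}$ to a fixed nontrivial diffeomorphism by flows of vector fields that are (smoothed) step functions, i.e.\ localized on a thin slab or having a steep front of width $\de$ that \emph{sweeps across} the manifold, so that every point is eventually moved; the $H^s$-norm of such a profile scales like $\de^{1/2-s}$ times its height, which is where the threshold $s=\tfrac12$ comes from ($\ch_{[0,1]}\in H^s$ iff $s<\tfrac12$). Your proposal to ``modulate $u$ by a highly oscillatory factor'' would not work as literally stated: the flow of $u(t,x)\,h(\nu x)$ leaves the zero set of $h(\nu\cdot)$ fixed and therefore does not reach the target diffeomorphism, and there is no obvious ``compensation'' that restores the endpoint without destroying the norm estimate. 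Moreover, the reduction step (i) is done in the literature not by concatenation over a generating set but by the cleaner observation that $\{\ph:\on{dist}(\on{Id},\ph)=0\}$ is a \emph{normal subgroup}, combined with simplicity of $\Diff_c(N)_0$, so that it suffices to exhibit a single nontrivial element at distance zero. Your instinct that the borderline case $s=\tfrac12$ on $S^1\x C$ is the delicate one is correct; it is handled by a logarithmic refinement of the front estimate.

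For the positivity half, the proposed separating functional is the weak point. In dimension $\ge 2$ there is no embedding $H^1\hookrightarrow C^0$, so no sup-norm-type displacement functional is Lipschitz for $G^1$, and the inequality $|\tfrac{d}{dt}\Ph(\ph(t))|\le C\sqrt{G^s_{\ph}(\dot\ph,\dot\ph)}$ for a functional ``built from the pullback volume density or an $H^1$-norm of $\ph$'' is precisely the nontrivial content of the theorem, not a routine Cauchy--Schwarz step: the $H^1$-length of a path controls, e.g., $\|\on{div}u\|_{L^2}$ and hence the transport of $\sqrt{\det d\ph}$, but this does not separate volume-preserving diffeomorphisms, and additional arguments are needed. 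Your one-dimensional remark ($H^s(S^1)\hookrightarrow C^0$ for $s>\tfrac12$, matching the vanishing threshold from below) is correct and is indeed how the $\dim N=1$ case goes. So: right skeleton, correct exponents, but the two load-bearing constructions (the sweeping concentrated front, and a $G^1$-Lipschitz separating functional valid in all dimensions) are either replaced by a mechanism that fails or asserted without the argument that makes them true.
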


This proved in \cite{Bauer2013b}, \cite{Bauer2013c}, and \cite{Bauer2012c}.
Note that low order Sobolev metrics have geodesic equations corresponding to well-known non-linear 
PDEs:
On $\Diff(S^1)$ or $\Diff_{\mathcal A}(\mathbb R)$ the $L^2$-geodesic equation is Burgers' equation, 
on the Virasoro group it is the KdV equation, and the (standard) $H^1$-geodesic is (in both cases a 
variant of) the Camassa-Holm equation; see 
\cite[7.2]{Bauer2014} for a more comprehensive overview. All these are completely integrable 
infinite dimensional Hamiltonian systems.

\begin{thm}
Let $(N,\bar g)$ be a compact Riemannian manifold.
Then the geodesic equation is locally well-posed on $\Diff_{\mathcal A}(N)$ and the geodesic exponential 
mapping is a local diffeomorphism for a Sobolev metric of 
integer order $s\ge 1$.
For a Sobolev metric of integer order $s>\frac{\dim(N)+3}{2}$ the geodesic equation is even globally 
well-posed, so that $(\Diff_{\mathcal A}(N), G^s)$ is geodesically complete. This is also true for 
non-integer order $s$ if $N=\mathbb R^n$.

For $N=S^1$, the geodesic equation is locally wellposed even for $s\ge \frac12$.
\end{thm}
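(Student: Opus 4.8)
The plan is to follow the Ebin--Marsden strategy of passing to Hilbert completions and then descending. Fix an integer $q$ large enough that $q>\dim(N)/2+1$ (so that $H^q$ is a ring acting on $H^k$ for $|k|\le q$), and form the Hilbert manifold $\Diff_{H^q}(N)$ of Sobolev-$H^q$ diffeomorphisms: for $N$ compact this is a smooth Hilbert manifold and a topological group with smooth right translations, but \emph{not} a Lie group, since left translation and inversion lose derivatives --- which is exactly why one must work in the right trivialization. (For $N=\mathbb R^n$ one uses the Hilbert completion of the relevant $\mathcal A$-group, with decay built in; for bounded geometry one uses $H^q$ relative to $\bar g$.) Writing the Hamiltonian form $\p_t\ph=u\o\ph$, $\p_t\al=-\L_u\al$, $u=K(\al)$ from \ref{wRmDiff} in the trivialization $T\Diff_{H^q}(N)\cong\Diff_{H^q}(N)\x H^q$, the geodesic equation becomes the flow equation of a vector field (the geodesic spray) $S^q$ on $T\Diff_{H^q}(N)$, whose force term is assembled from the twisted operators $\ph\mapsto\tilde L_\ph:=R_\ph\o L\o R_{\ph\i}$ and $\ph\mapsto\tilde K_\ph:=R_\ph\o K\o R_{\ph\i}$, where $R_\ph$ is composition from the right by $\ph$.

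The analytic heart of the proof is that $S^q$ is a \emph{smooth} vector field on $T\Diff_{H^q}(N)$. Right composition by an $H^q$-diffeomorphism is bounded but not smooth on $H^k$ for $k\le q$, and $L$ has order $2s$; the point is that in $S^q$ these operations occur only in the combinations $\tilde L_\ph,\tilde K_\ph$, in which the apparent loss of $2s$ derivatives cancels. For integer $s$ one has $L=(1+\De^{\bar g})^s$, and $\tilde L_\ph$ is computed explicitly by the chain rule for iterated covariant derivatives (a covariant Fa\`a~di~Bruno formula): its coefficients are polynomial in $T\ph$, $(T\ph)\i$ and the covariant derivatives of $\ph$ up to order $2s$, so by the Sobolev module estimates $\ph\mapsto\tilde L_\ph$ is smooth $\Diff_{H^q}(N)\to L(H^q,H^{q-2s})$, and, since $L$ is elliptic and the construction is stable under inversion, $\ph\mapsto\tilde K_\ph$ is smooth $\Diff_{H^q}(N)\to L(H^{q-2s},H^q)$; the remaining $\on{ad}^*$/Christoffel terms are smooth by the same estimates. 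Then the classical existence, uniqueness and smooth-dependence theorem for vector fields on Banach manifolds gives a smooth local flow of $S^q$: this is local well-posedness of the geodesic equation on $\Diff_{H^q}(N)$ and smoothness of the geodesic exponential mapping near the zero section of $T\Diff_{H^q}(N)$; its derivative at $0_{\on{Id}}$ is the identity, so by the inverse function theorem it is a local diffeomorphism. For $N=S^1$ the order $2s=1$ is permitted: $L$ is then an elliptic pseudodifferential operator of order $\ge1$, and the same conjugation argument goes through via the symbol calculus on $\Diff_{H^q}(S^1)$, which is where the threshold drops to $s\ge\tfrac12$.

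Next one descends to the smooth (convenient) group by a ``no loss, no gain'' argument. If $\ph(0)$ and $u(0)$ are $C^\infty$ they lie in $\Diff_{H^q}(N)\x H^q$ for every admissible $q$, and by uniqueness the $H^q$-solutions for different $q$ agree on their common domain. One then shows the maximal existence interval is independent of $q$: as long as the $H^q$-solution exists, a Gronwall estimate for the spray at level $H^{q+1}$ --- in which the growth of the higher norm is controlled by a quantity built from the $H^q$-norm only, using that right composition is continuous in the strong operator topology --- prevents blow-up of the $H^{q+1}$-norm on that interval. Hence the solution stays in $\bigcap_q\Diff_{H^q}(N)=\Diff_{\mathcal A}(N)$ (with the evident decay modifications for $\mathcal A\in\{\mathcal S,W^{\infty,p}\}$ on $\mathbb R^n$), giving local well-posedness and smoothness of $\on{Exp}$ on $\Diff_{\mathcal A}(N)$ itself.

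Finally, for $s>\tfrac{\dim(N)+3}{2}$ one upgrades to global well-posedness. Conservation of energy along geodesics, $\langle Lu(t),u(t)\rangle=\on{const}$, together with the ellipticity of $L$, makes $\|u(t)\|_{H^s}$ constant in $t$; the threshold $s>\tfrac{\dim(N)+3}{2}$ is precisely what makes the Sobolev embedding $H^s\hookrightarrow C^1_b$ hold with enough margin for the inductive Gronwall estimates to close. Feeding this uniform bound on $u$ into the flow equation $\p_t\ph=u\o\ph$ and into the higher-order energy estimates of the previous step shows, by Gronwall and induction on $q$, that $\ph(t)$ stays in a bounded subset of $\Diff_{H^q}(N)$ on every bounded time interval, for each $q$; hence no geodesic leaves every bounded set in finite time, first in $\Diff_{H^q}(N)$ and then, by the bootstrap above, in $\Diff_{\mathcal A}(N)$. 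Thus $\on{Exp}$ is defined on all of $T\Diff_{\mathcal A}(N)$ and $(\Diff_{\mathcal A}(N),G^s)$ is geodesically complete; when $N=\mathbb R^n$ the estimates are Fourier-analytic and never use that $L$ is a differential operator, so the conclusion persists for non-integer $s$. The main obstacle throughout is the smoothness of $\ph\mapsto\tilde L_\ph,\tilde K_\ph$ into the correct operator spaces, i.e.\ the book-keeping showing the apparent loss of $2s$ derivatives in $S^q$ actually cancels; the rest is a standard ODE-on-Banach-manifolds argument together with Sobolev multiplication estimates.
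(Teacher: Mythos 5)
The paper itself offers no proof of this theorem---it defers entirely to \cite{Bauer2011b}, \cite{Escher2014}, \cite{Escher2014b}, and \cite{Bauer2015}---and your proposal is a correct outline of precisely the Ebin--Marsden strategy those references implement: smoothness of the geodesic spray on the Hilbert completions $\Diff_{H^q}(N)$ via the conjugated operators $R_\ph\o L\o R_{\ph\i}$ and $R_\ph\o K\o R_{\ph\i}$, the Banach ODE theorem and inverse function theorem, a no-loss-no-gain descent to $\Diff_{\mathcal A}(N)=\bigcap_q\Diff_{H^q}(N)$, and conservation of $\langle Lu,u\rangle$ combined with Sobolev embedding and Gronwall estimates for global existence. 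The only place your sketch is noticeably thinner than the actual arguments is the fractional-order case ($N=S^1$ with $s\ge\tfrac12$, and non-integer $s$ on $\mathbb R^n$), where establishing smoothness of $\ph\mapsto R_\ph\o L\o R_{\ph\i}$ for a pseudodifferential $L$ is the substantial content of Escher--Kolev and Bauer--Escher--Kolev and does not reduce to a Fa\`a di Bruno bookkeeping; you correctly flag this as the main obstacle but only assert that it goes through.
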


For these results see \cite{Bauer2011b}, \cite{Escher2014}, \cite{Escher2014b}, \cite{Bauer2015}. 

\begin{thm}\label{Rmap}
{\rm \cite{BBM14b}}
For $\mathcal A\in\{C^\infty_c, \mathcal S, W^{\infty,1}\}$ 
let 
$$
\mathcal A_1(\mathbb R)=\{f\in C^\infty(R)\,:\, f'\in\mathcal A(\mathbb R)\,,\, 
f(-\infty)=0\}
$$
and let $\Diff_{\mathcal A_1}(\mathbb R)=\{\ph = \on{Id}+f\,:\, 
f\in \mathcal A_1(\mathbb R)\,,\, f'>-1\}$. These are all regular Lie groups. 
The right invariant weak Riemannian metric
$G^{\dot H^1}_{\on{Id}}(X,Y)= \int_{\mathbb R}X'Y'\,dx$
is positive definite both on $\Diff_{\mathcal A}(\mathbb R)$ where it does not admit a geodesic 
equation (a non-robust weak Riemannian manifold), and on 
$\Diff_{\mathcal A_1}(\mathbb R)$ where it admits a geodesic equation but not in the stronger sense 
of Arnold. 
On $\on{Diff}_{\mathcal A_1}(\mathbb R)$ the geodesic equation is the Hunter-Saxton equation
$$
(\ph_t)\circ\ph\i=u,  \qquad u_{t} = -u u_x +\frac12 \int_{-\infty}^x   (u_x(z))^2 \,dz\;,
$$
and the induced geodesic distance is positive. 
We define the $R$-map by:
$$ R:
 \Diff_{\mathcal A_1}(\mathbb R)\to 
\mathcal A\big(\mathbb R,\mathbb R_{>-2}\big)\subset\mathcal A(\mathbb R,\mathbb R),
\quad R(\ph) = 2\;\big((\ph')^{1/2}-1\big)\, .
$$
The $R$-map is invertible with inverse
$$R\i :
\mathcal A\big(\mathbb R,\mathbb R_{>-2}\big) \to \Diff_{\mathcal A_1}(\mathbb R),
\quad R\i(\ga)(x) = x+\frac14 \int_{-\infty}^x \ga^2+4\ga\;dx\; .
$$ 
The pull-back of the flat $L^2$-metric via $R$ is the $\dot H^1$-metric on $\on{Diff}_\mathcal A(\mathbb R)$, i.e., 
$R^*\langle \cdot,\cdot\rangle_{L^2(dx)} = G^{\dot H^1}$. 
Thus the space $\big(\Diff_{\mathcal A_1}(\mathbb R),\dot H^1\big)$ is a flat space in the sense of Riemannian geometry.
There are explicit formulas for geodesics, geodesic distance, and geodesic splines, even for more 
restrictive spaces $\mathcal A_1$ like Denjoy-Carleman ultradifferentiable function classes.
There are also soliton-like solutions. 
$(\Diff_{\mathcal A_1}(\mathbb R), G^{\dot H^1})$ is geodesically convex, but not geodesically 
complete; the geodesic completion is the smooth semigroup 
$\on{Mon}_{\mathcal A_1} = \{\ph=\on{Id}+f\,:\, f\in\mathcal A_1(\mathbb R)\,,\, f'\ge -1\}$.
Any geodesic can hit the subgroup 
$\Diff_{\mathcal A}(\mathbb R)\subset \Diff_{\mathcal A_1}(\mathbb R)$ at most twice.
\end{thm}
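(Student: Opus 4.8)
\emph{Strategy.} Everything in the theorem reduces, via the $R$-map, to elementary Riemannian geometry on a convex $c^\infty$-open subset of a convenient vector space, together with the $\on{ad}^*$-computation of section~\ref{wRmDiff}; throughout $\mathcal A\in\{C^\infty_c,\mathcal S,W^{\infty,1}\}$, so $\mathcal A(\mathbb R)\hookrightarrow L^2(\mathbb R)$ and $\langle f,g\rangle_{L^2}=\int_{\mathbb R}fg\,dx$ is a bounded weak inner product on $\mathcal A(\mathbb R,\mathbb R)$. First I would note that $f\mapsto f'$ identifies $\mathcal A_1(\mathbb R)$ with $\mathcal A(\mathbb R)$ — the constraint $f(-\infty)=0$ selects the primitive — so $\mathcal A_1(\mathbb R)$ is a convenient vector space whose smooth curves are those of $\mathcal A(\mathbb R)$ with one further $x$-primitive. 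Since the members of $\mathcal A_1$ are bounded with derivative in $\mathcal A\subset\mathcal B$, the condition $f'>-1$ is $c^\infty$-open, so $\Diff_{\mathcal A_1}(\mathbb R)$ is $c^\infty$-open in the affine space $\on{Id}+\mathcal A_1(\mathbb R)$; smoothness of composition and inversion, hence regularity of $\Diff_{\mathcal A_1}(\mathbb R)$ and of its subgroup $\Diff_{\mathcal A}(\mathbb R)$, follows by the same curve-wise Fa\`a~di~Bruno and ideal-property arguments used for $\Diff_{\mathcal A}(\mathbb R)$ above. For $\ph=\on{Id}+f$ we have $\ph'=1+f'>0$, so $R(\ph)=2((\ph')^{1/2}-1)$ is well defined, pointwise $>-2$, and lies in $\mathcal A$ because $t\mapsto 2(\sqrt{1+t}-1)$ is real analytic near $0$ and fixes $0$ while $\mathcal A$ is stable under left composition with such functions. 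Conversely, for $\ga\in\mathcal A(\mathbb R,\mathbb R_{>-2})$ put $\ph:=R\i(\ga)$; then $\ph'=1+\tfrac14(\ga^2+4\ga)=\tfrac14(\ga+2)^2>0$, so $(\ph')^{1/2}=\tfrac12(\ga+2)$ and $R(\ph)=\ga$, while $\ga^2+4\ga\in\mathcal A$ by the algebra structure gives $\ph\in\on{Id}+\mathcal A_1$; the other composite is the identity since $\tfrac14(R(\ph)^2+4R(\ph))=\ph'-1=f'$ and $f(-\infty)=0$. Smoothness of $R$ and $R\i$ is then verified on smooth curves.

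\emph{The pulled-back metric and flatness.} Differentiating $R$ gives $T_\ph R\cdot h=h'/(\ph')^{1/2}$, hence with $X=h\o\ph\i$, $Y=k\o\ph\i$ and the substitution $x=\ph(y)$,
$$
\langle T_\ph R\cdot h,\,T_\ph R\cdot k\rangle_{L^2(dx)}=\int_{\mathbb R}\frac{h'k'}{\ph'}\,dx=\int_{\mathbb R}X'Y'\,dx=G^{\dot H^1}_\ph(h,k),
$$
so $R^*\langle\cdot,\cdot\rangle_{L^2}=G^{\dot H^1}$ on $\Diff_{\mathcal A_1}(\mathbb R)$; positive definiteness of $G^{\dot H^1}$ follows there because $R$ is a bijection onto an open set, and on $\Diff_{\mathcal A}(\mathbb R)$ because a decaying $X$ with $X'=0$ vanishes. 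Now $\mathcal A(\mathbb R,\mathbb R_{>-2})$ is a \emph{convex} $c^\infty$-open subset of $\mathcal A(\mathbb R,\mathbb R)$ carrying the constant weak inner product $\langle\cdot,\cdot\rangle_{L^2}$, so there straight segments are minimizing geodesics: the space is flat and geodesically convex, geodesic distance equals $L^2$-distance, and it is incomplete precisely because a segment $t\mapsto\ga_0+tv$ leaves the set when it touches the value $-2$, the closure being $\mathcal A(\mathbb R,\mathbb R_{\ge-2})$. Transporting by the isometry $R\i$: $(\Diff_{\mathcal A_1}(\mathbb R),G^{\dot H^1})$ is flat and geodesically convex, its geodesics, splines, and distance $d(\ph_0,\ph_1)=\|R(\ph_0)-R(\ph_1)\|_{L^2}$ are the $R\i$-images of the Euclidean ones (so distance is positive, $R$ being injective; the same argument applies to the more restrictive Denjoy--Carleman $\mathcal A_1$), the soliton-like solutions are images of distinguished straight lines, and the geodesic completion is the $R\i$-image of $\mathcal A(\mathbb R,\mathbb R_{\ge-2})$, namely $\on{Mon}_{\mathcal A_1}=\{\ph:\ph'=\tfrac14(R(\ph)+2)^2\ge0\}$, a smooth semigroup under composition.

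\emph{The geodesic equation.} I would feed $\check\ga=L=-\p_x^2$ into section~\ref{wRmDiff}: then $\al=L(u)=-u_{xx}\,(dx)^2$, $\on{ad}^*_u=\L_u$ on such densities, and $\p_t\al=-\L_u\al$ reads $-u_{xxt}=uu_{xxx}+2u_xu_{xx}$; integrating twice in $x$ with the constants forced by decay at $-\infty$ gives precisely the displayed Hunter--Saxton equation. Existence of the geodesic equation is the question whether $\on{ad}(X)^*\check\ga(X)=-(XX_{xxx}+2X_xX_{xx})(dx)^2$ lies in $\check\ga(\mathfrak g)$, equivalently whether $Y$ with $Y_x=XX_{xx}+\tfrac12X_x^2$ (the constant again fixed by decay at $-\infty$) lies in the Lie algebra $\mathfrak g$. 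For $\mathfrak g=\mathcal A_1(\mathbb R)$ it does: $X_x^2\in\mathcal A$ by the algebra structure and $XX_{xx}\in\mathcal A$ by the $\mathcal B$-module structure, so $Y_x\in\mathcal A$, $Y\in\mathcal A_1$; moreover $X\mapsto Y$ is bounded quadratic, so by the Lemma of section~\ref{wRmLie} the Levi-Civita connection and the geodesic equation exist. For $\mathfrak g=\mathcal A(\mathbb R)$ it fails: $\int_{\mathbb R}Y_x=\int_{\mathbb R}(XX_{xx}+\tfrac12X_x^2)=-\tfrac12\|X_x\|_{L^2}^2\ne0$ for $X\ne0$ (integration by parts, boundary terms vanishing), so $Y$ does not decay at $+\infty$. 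Arnold's stronger requirement is that $\on{ad}(X)^\top Y=\check\ga\i\on{ad}(X)^*\check\ga(Y)$ exist for \emph{all} $X,Y$; this needs $Z$ with $Z_x=XY_{xx}+\int_{-\infty}^{x}X_x(z)\,Y_{xx}(z)\,dz$ in $\mathfrak g$, and the second summand lies only in $\mathcal A_1$, decaying at $+\infty$ just when $Y=X$ (where it is $\tfrac12X_x^2$), so $\on{ad}^\top$ is not everywhere defined on $\Diff_{\mathcal A_1}(\mathbb R)$.

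\emph{Geodesics meeting $\Diff_{\mathcal A}(\mathbb R)$, and the main obstacle.} Under $R$ the subgroup $\Diff_{\mathcal A}(\mathbb R)\subset\Diff_{\mathcal A_1}(\mathbb R)$ corresponds to $\{\ga\in\mathcal A(\mathbb R,\mathbb R_{>-2}):Q(\ga)=0\}$ with $Q(\ga):=\int_{\mathbb R}(\ph'-1)\,dx=\tfrac14\|\ga\|_{L^2}^2+\int_{\mathbb R}\ga$, which is exactly the obstruction to $\ph-\on{Id}$ vanishing at $+\infty$. Along a non-constant geodesic $\ga(t)=\ga_0+tv$ the map $t\mapsto Q(\ga(t))$ is a quadratic polynomial with leading coefficient $\tfrac14\|v\|_{L^2}^2>0$, hence has at most two zeros, so the geodesic meets $\Diff_{\mathcal A}(\mathbb R)$ for at most two parameter values. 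The main obstacle throughout is the pair of function-space inputs — stability of $\mathcal A$ under left composition with $t\mapsto 2(\sqrt{1+t}-1)$ together with its ideal and $\mathcal B$-module structure, and the compatibility of $R,R\i$ with the smooth-curve descriptions of $\mathcal A$ and $\mathcal A_1$ — and, in the geodesic-equation step, the delicate tracking of which primitives land in $\mathcal A$ and which only in $\mathcal A_1$, since that alone separates ``no geodesic equation'', ``geodesic equation but not in Arnold's sense'', and the affirmative case; the remaining Riemannian statements are then linear algebra in the flat model.
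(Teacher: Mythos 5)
Your proposal is correct and follows essentially the intended argument: the paper itself offers no proof but cites \cite{BBM14b}, and your route --- linearizing via the $R$-map to the convex open set $\mathcal A(\mathbb R,\mathbb R_{>-2})$ with the flat $L^2$-metric, then reading off flatness, convexity, incompleteness, the completion $\on{Mon}_{\mathcal A_1}$, the quadratic-in-$t$ count for hitting $\Diff_{\mathcal A}(\mathbb R)$, and the $\on{ad}^*$/primitive bookkeeping that separates $\mathcal A$ from $\mathcal A_1$ --- is exactly the mechanism of that reference. The only small imprecision is the parenthetical claim that $\int_{-\infty}^{x}X_x Y_{xx}\,dz$ decays at $+\infty$ ``just when $Y=X$'': it decays iff $\int_{\mathbb R}X_xY_{xx}\,dz=0$, which holds in particular (not only) for $Y=X$; what matters, and what you correctly use, is that it fails for general pairs $(X,Y)$, so Arnold's stronger condition is violated.
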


\section{Robust weak Riemannian manifolds and Riemannian submersions}

Another problem arises if we want to consider Riemannian submersions, in particular shape spaces as 
orbits of diffeomorphism groups, as explained in \cite{Michor127}.

\subsection{Robust weak Riemannian manifolds}
Some  constructions may lead to vector fields whose values do not lie in
$T_x M$, but in the Hilbert space completion $\overline{T_x M}$
with respect to the weak inner product $g_x$. 
We  need that
$\bigcup_{x\in M}\overline{T_x M}$ forms a smooth vector bundle
over  $M$.  In a coordinate chart on open
$U \subset M$, $TM|_U$ is a trivial bundle $U \times V$
and all the inner products $g_x, x \in U$ define inner products
on the  same topological vector space $V$. They all should be 
bounded with respect to each other, so that
the completion $\overline{V}$ of $V$ with respect to $g_x$ does
not depend on $x$ and $\bigcup_{x\in U}\overline{T_x M} \cong U
\times    \overline{V}$.   This   means   that   $\bigcup_{x\in
M}\overline{T_x  M}$ forms a smooth vector bundle over $M$ with
trivialisations the linear extensions of the trivialisations of
the  bundle  $TM\to M$. Chart changes should respect this. This is a compatibility 
property between the weak Riemannian metric and some smooth atlas of $M$.

{\bf Definition}  A convenient weak Riemannian manifold
$(M,g)$ will be called a {\em robust} Riemannian manifold if
\begin{itemize}
\item The Levi-Civita convariant derivative of the metric $g$ exists: 
The symmetric gradients should exist and be smooth.
\item The completions $\overline{T_x M}$ form a smooth vector bundle as above.
\end{itemize}

\begin{thm} If a right invariant weak Riemannian metric on a regular Lie group admits the Levi-Civita 
covariant derivative, then it is already robust. 
\end{thm}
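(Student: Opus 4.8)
The plan is to verify the two clauses in the definition of a robust Riemannian manifold separately. The first clause, existence of the Levi-Civita covariant derivative (equivalently: the symmetric gradients of $g$ exist and are smooth), is precisely the hypothesis; indeed the lemma quoted from \cite{BBM14b} even provides the explicit formula $\nabla_XY = dY.R_X + \rh(X)Y - \tfrac12\on{ad}(X)Y$. So the entire content lies in the second clause: that the fibrewise Hilbert completions $\overline{T_xG}$ assemble into a smooth vector bundle over $G$, compatibly with a smooth atlas of $G$.

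For this I would use right invariance directly. Recall that the right Maurer--Cartan form gives a global smooth trivialization $(\pi_G,\ka)\colon TG\to G\x\mathfrak g$ of the tangent bundle, and that right invariance of the metric means $g_x(\xi,\et)=\ga(\ka_x(\xi),\ka_x(\et))$, so that $T_e(\mu^x)\colon(\mathfrak g,\ga)\to(T_xG,g_x)$ is an isometric isomorphism for every $x\in G$. Let $\overline{\mathfrak g}$ be the Hilbert space completion of $\mathfrak g$ with respect to the bounded positive-definite inner product $\ga$. Then each $\ka_x$ extends to an isometric isomorphism $\overline{T_xG}\to\overline{\mathfrak g}$, and performing this for all $x$ at once identifies $\bigcup_{x\in G}\overline{T_xG}$ with $G\x\overline{\mathfrak g}$; by construction this identification is the fibrewise linear extension of the global trivialization $(\pi_G,\ka)$ of $TG$. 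Now take any smooth atlas $(U_\al,u_\al)$ of $G$ and over each $U_\al$ use the restriction of $(\pi_G,\ka)$ --- not the chart-induced trivialization --- as the trivialization of $TG|_{U_\al}$. In each of these local trivializations the metric is the single fixed inner product $\ga$ on $\mathfrak g$, hence trivially mutually bounded across the chart, the completed model space is the fixed Hilbert space $\overline{\mathfrak g}$, and all chart changes act by the identity on the $\overline{\mathfrak g}$-factor because the trivialization is one and the same globally. Thus $\bigcup_{x\in G}\overline{T_xG}=G\x\overline{\mathfrak g}$ is a trivial, hence smooth, vector bundle over $G$ compatible with the atlas, which is the second clause; together with the hypothesis this shows that $(G,g)$ is robust.

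The one delicate point --- and the step I expect to need defending --- is the legitimacy of using the right trivialization of $TG$ in place of the chart-induced trivializations when checking the compatibility clause of the definition. Once one grants that the definition only asks for \emph{some} atlas together with \emph{some} vector-bundle trivializations of $TM$ over its charts, the argument becomes immediate precisely because right invariance turns $g$ into a constant field of inner products. If instead one insisted on the chart-induced trivializations, right invariance would still reduce everything to verifying mutual boundedness of the metric coefficients inside a single chart near $e$, and this residual boundedness check is the genuinely nontrivial point that the right trivialization sidesteps entirely.
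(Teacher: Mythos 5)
Your proof is correct and follows essentially the same route as the paper's: exploit right invariance to identify every completed fibre $\overline{T_xG}$ isometrically with the completion $\overline{\mathfrak g}$ of $(\mathfrak g,\ga)$ via right translation, so that the completions assemble into the trivial bundle $G\times\overline{\mathfrak g}$. The only difference is that the paper additionally invokes the smooth uniform boundedness theorem to conclude that these isometric isomorphisms depend smoothly on $g\in G$, which is precisely the residual compatibility-with-charts point you flag as delicate at the end.
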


\begin{proof}
By right invariance, each right translation $T\mu^g$ extends to an isometric isomorphims 
$\overline{T_xG} \to \overline{T_{xg}G}$. By the uniform boundedness theorem these isomorphisms 
depend smoothly on $g\in G$.
\end{proof}

\subsection{Covariant curvature and O'Neill's formula}
In \cite[2.2]{Michor127} one finds the following 
formula for the numerator of sectional curvature,
which is valid for {\em closed smooth} 1-forms $\al,\be \in\Om^1_g(M)$ on a weak Riemannianian 
manifold $(M,g)$. Recall that we 
view $g:TM\to T^*M$ and so $g\i$ 
is the dual inner product on $g(TM)$ and $\al^\sharp = g\i(\al)$. 
$$
\begin{aligned}
&g\big(R(\al^{\sharp},\be^{\sharp})\al^{\sharp},\be^{\sharp}\big) =
\\&
-\tfrac12\al^{\sharp}\al^{\sharp}(\|\be\|_{g\i}^2)
-\tfrac12\be^{\sharp}\be^{\sharp}(\|\al\|_{g\i}^2)
+\tfrac12(\al^{\sharp}\be^{\sharp}+\be^{\sharp}\al^{\sharp})g\i(\al,\be)
\\&\qquad\qquad
\big(\text{last line }= -\al^\sharp \be([\al^\sharp,\be^\sharp])+\be^\sharp\al([\al^\sharp,\be^\sharp]])\big)
\\&
-\tfrac14\|d(g\i(\al,\be))\|_{g\i}^2
+\tfrac14 g\i\big(d(\|\al\|_{g\i}^2),d(\|\be\|_{g\i}^2)\big)
\\&
+\tfrac34 \big\|[\al^{\sharp},\be^{\sharp}]\big\|_{g}^2
\end{aligned}
$$
This is called Mario's formula since Mario Micheli derived the coordinate version in his 2008 
thesis. Each term depends only on $g\i$ with the exception of the last term. The role of the last 
term (which we call the O'Neill term) will become clear  in the next result.
Let $p:(E,g_E)\to (B,g_B)$ be a Riemannian submersion between infinite dimensional 
robust Riemann manifolds; i.e., for each $b\in B$ and $x\in E_b:=p\i(b)$ 
the tangent mapping $T_xp:(T_xE, g_E)\to (T_bB,g_B)$ 
is a surjective metric quotient map so that 
\begin{equation*}
\|\xi_b\|_{g_B} := \inf\bigl\{\|X_x\|_{g_E}\,:\,X_x\in T_xE, T_xp.X_x=\xi_b\bigr\}.
\end{equation*}
The infinimum need not be attained in $T_xE$ but will be in the completion 
$\overline{T_xE}$. 
The orthogonal subspace $\{Y_x: g_E(Y_x,T_x(E_b))=0\}$ 
will therefore be taken in $\overline{T_x(E_b)}$ in $T_xE$. 
If $\al_b=g_B(\al_b^\sharp,\quad)\in g_B(T_bB)\subset T_b^*B$ 
is an element in the $g_B$-smooth dual, then 
$p^*\al_b:=(T_xp)^*(\al_b)= g_B(\al_b^\sharp,T_xp\quad):T_xE\to \mathbb R$ 
is in $T_x^*E$ but in general it is not an element in the smooth dual $g_E(T_xE)$. 
It is, however, an element of the Hilbert space completion $\overline{g_E(T_xE)}$ 
of the $g_E$-smooth dual $g_E(T_xE)$ with respect to the norm $\|\quad\|_{g_E\i}$, 
and the element $g_E\i(p^*\al_b)=: (p^*\al_b)^\sharp$ is in the 
$\|\quad\|_{g_E}$-completion $\overline{T_xE}$ of $T_xE$. 
We can call $g_E\i(p^*\al_b)=: (p^*\al_b)^\sharp$ the {\it horizontal lift} of 
$\al_b^\sharp = g_B\i(\al_b)\in T_bB$.

\begin{thm}
{\rm \cite[2.6]{Michor127}}
Let $p:(E,g_E)\to (B,g_B)$ be a Riemann submersion between infinite
dimensional robust Riemann manifolds. Then for {\bf closed} 1-forms
$\al,\be\in\Om_{g_B}^1(B)$ O'Neill's formula holds in the form:
\begin{align*}
 g_B\big(R^B(\al^{\sharp},\be^{\sharp})\be^{\sharp},\al^{\sharp}\big)
%\\&\qquad
&= g_E\big(R^E((p^*\al)^{\sharp},(p^*\be)^{\sharp})(p^*\be)^{\sharp},(p^*\al)^{\sharp}\big)
\\&\quad
+\tfrac34\|[(p^*\al)^{\sharp},(p^*\be)^{\sharp}]^{\text{ver}}\|_{g_E}^2
\end{align*}
\end{thm}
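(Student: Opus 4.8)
The plan is to deduce O'Neill's formula from Mario's formula (the curvature formula stated just above) by a term-by-term comparison between the expression on $B$ and the expression on $E$, exploiting the fact that a Riemannian submersion intertwines the relevant metric-dual data. First I would fix $b\in B$, $x\in E_b$, and set $\al^\sharp,\be^\sharp\in T_bB$ together with their horizontal lifts $(p^*\al)^\sharp = g_E\i(p^*\al)$ and $(p^*\be)^\sharp = g_E\i(p^*\be)$ in $\overline{T_xE}$. The key pointwise identities to record are: $g_B\i(\al,\be)(b) = g_E\i(p^*\al,p^*\be)(x)$, hence $\|\al\|_{g_B\i}^2 = \|(p^*\al)^\sharp\|_{g_E}^2 = \|p^*\al\|_{g_E\i}^2$ as functions (pulled back along $p$ where necessary), and correspondingly for $\be$; this is exactly the statement that $T_xp$ is a metric quotient map, dualized. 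These identities must be understood as equalities of \emph{functions} on a neighborhood, because Mario's formula differentiates them: one checks that $(p^*\al)^\sharp$ projects under $Tp$ to $\al^\sharp$ plus a vertical correction, so derivatives of $g_E\i(p^*\al,p^*\be)$ along $(p^*\al)^\sharp$ reduce to derivatives of $g_B\i(\al,\be)$ along $\al^\sharp$ pulled back, the vertical part contributing nothing because $p^*\al$ annihilates vertical vectors.

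The heart of the argument is then to run Mario's formula on both sides. On $B$ one has $g_B(R^B(\al^\sharp,\be^\sharp)\al^\sharp,\be^\sharp)$ expressed through six terms built from $g_B\i$, the sharps, and the bracket $[\al^\sharp,\be^\sharp]$. On $E$ one writes the analogous six-term expression for $g_E(R^E((p^*\al)^\sharp,(p^*\be)^\sharp)(p^*\al)^\sharp,(p^*\be)^\sharp)$. By the identities above, the first five terms of the $E$-expression (the ones depending only on $g_E\i$ and involving second-order directional derivatives of the scalar functions $\|p^*\al\|_{g_E\i}^2$, $\|p^*\be\|_{g_E\i}^2$, $g_E\i(p^*\al,p^*\be)$) match, term by term, the pullbacks of the first five terms of the $B$-expression — this uses closedness of $\al,\be$ to guarantee $p^*\al,p^*\be$ are closed on $E$ so that Mario's formula actually applies there, and it uses the bracket-compatibility $[(p^*\al)^\sharp,(p^*\be)^\sharp]$ projecting to $[\al^\sharp,\be^\sharp]$ for the ``last line'' alternative form. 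The only discrepancy is the sixth term: on $B$ it is $\tfrac34\|[\al^\sharp,\be^\sharp]\|_{g_B}^2$, while on $E$ it is $\tfrac34\|[(p^*\al)^\sharp,(p^*\be)^\sharp]\|_{g_E}^2$. Splitting the $E$-bracket into its horizontal and vertical parts, and noting the horizontal part has the same $g_E$-norm as $[\al^\sharp,\be^\sharp]$ has $g_B$-norm (again the metric quotient property, plus the fact that the horizontal lift of $[\al^\sharp,\be^\sharp]$ is the horizontal part of $[(p^*\al)^\sharp,(p^*\be)^\sharp]$ — this is the classical computation behind O'Neill's A-tensor), one gets exactly the extra $\tfrac34\|[(p^*\al)^\sharp,(p^*\be)^\sharp]^{\text{ver}}\|_{g_E}^2$. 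Rearranging yields the claimed formula, after a reindexing of the curvature arguments to match $R(\al^\sharp,\be^\sharp)\be^\sharp$ against $R(\al^\sharp,\be^\sharp)\al^\sharp$ via the symmetries of $R$.

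The main obstacle, and the place requiring genuine care rather than bookkeeping, is the analytic issue that the horizontal lifts $(p^*\al)^\sharp$ live in the Hilbert completion $\overline{T_xE}$, not in $T_xE$ itself, so $(p^*\al)^\sharp$ is not literally a smooth vector field and the directional derivatives appearing in Mario's formula are not a priori defined. I would handle this exactly as in the robust-manifold framework set up earlier in the paper: the robustness hypothesis guarantees that $\bigcup_x \overline{T_xE}$ is a smooth bundle, that $g_E\i$ extends smoothly to its dual, and — crucially — one must verify that the scalar functions $g_E\i(p^*\al,p^*\be)$ and the pairings entering each term are genuinely smooth on $E$ and that their derivatives can be computed by the naive chain rule, reducing everything to smooth objects on $B$ via $p$. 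This is where closedness of the forms is used a second time (to make the ``$-\al^\sharp\be([\al^\sharp,\be^\sharp])+\be^\sharp\al([\al^\sharp,\be^\sharp])$'' reformulation legitimate and to control the cross terms), and it is the step most likely to need the careful lemmas from \cite{Michor127} rather than a one-line citation.
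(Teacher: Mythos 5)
Your proposal is correct and follows essentially the same route as the paper: the paper's one-line proof (``the last (O'Neill) term is the difference between curvature on $E$ and the pullback of the curvature on $B$'') is exactly the compressed form of your argument, relying on the observation made just before the theorem that every term of Mario's formula except the bracket term depends only on the cometric $g\i$, which a Riemannian submersion intertwines. You have also correctly identified the genuine technical point deferred to \cite[2.6]{Michor127}, namely that the horizontal lifts live only in the completions $\overline{T_xE}$ and that robustness is what makes the directional derivatives in Mario's formula legitimate there.
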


\begin{proof} The last (O'Neill) term is the difference between curvature on $E$ and the pullback 
of the curvature on $B$.
\end{proof}

\subsection{Semilocal version of Mario's formula, force, and stress}
In all interesting examples of orbits of diffeomorphisms groups through a template shape, 
Mario's covariant curvature formula leads to complicated and impenetrable formulas. 
Efforts to break this down to comprehensible pieces led to the concepts of symmetrized force and 
(shape-) stress explained below. Since acceleration sits in the second tangent bundle, one either 
needs a covariant derivative to map it down to the tangent bundle, or at least rudiments of  local 
charts. In \cite{Michor127} we managed the local version. Interpretations in mechanics or 
elasticity theory are still lacking. 

Let $(M,g)$ be a robust Riemannian manifold, $x\in M$, $\al,\be\in g_x(T_xM)$.
Assume we are given local smooth vector fields $X_\al$ and $X_\be$ such that:
\begin{enumerate}
\item $X_\al(x) = \al^\sharp(x),\quad X_\be(x) = \be^\sharp(x)$,
\item Then $\al^\sharp-X_\al$ is zero at $x$ hence has a well defined derivative 
       $D_x(\al^\sharp-X_\al)$ lying in Hom$(T_xM,T_xM)$. For a vector field $Y$ we have 
       $D_x(\al^\sharp-X_\al).Y_x = [Y,\al^\sharp-X_\al](x) = \L_Y(\al^\sharp-X_\al)|_x$.
       The same holds for $\be$.
\item $\L_{X_\al}(\al)=\L_{X_\al}(\be)=\L_{X_\be}(\al)=\L_{X_\be}(\be)=0$,
\item $[X_\al, X_\be] = 0$.
\end{enumerate}
Locally constant 1-forms and vector fields will do. We then define:
\begin{align*}
\mathcal F(\al,\be) :&= \tfrac12 d(g\i(\al,\be)), \qquad \text{a 1-form on $M$ called the {\it force},}\\
\mathcal D(\al,\be)(x) :&= D_x(\be^\sharp - X_\be).\al^\sharp(x)
\\&
= d(\be^\sharp - X_\be).\al^\sharp(x), \quad\in T_xM\text{ called the {\it stress}.}
\\
\implies &\mathcal D(\al,\be)(x) - \mathcal D(\be,\al)(x) = [\al^\sharp,\be^\sharp](x)
\end{align*}  
Then in terms of force and stress the numerator of sectional curvature looks as follows:
\begin{align*}
g\big(&R(\al^{\sharp},\be^{\sharp})\be^{\sharp},\al^{\sharp}\big)(x) = R_{11} +R_{12} + R_2 + 
R_3\,, 
\qquad\text{  where }
\\ 
& R_{11} = \tfrac12 \big(
\L_{X_\al}^2(g\i)(\be,\be)-2\L_{X_\al}\L_{X_\be}(g\i)(\al,\be)
%\\&\qquad\qquad\qquad
+\L_{X_\be}^2(g\i)(\al,\al)
 \big)(x)\,, 
\\
& R_{12} = \langle \mathcal F(\al,\al), \mathcal D(\be,\be) \rangle + \langle \mathcal F(\be,\be),\mathcal D(\al,\al)\rangle  
%\\&\qquad\qquad\qquad
- \langle \mathcal F(\al,\be), \mathcal D(\al,\be)+\mathcal D(\be,\al) \rangle \,,
\\
& R_2 = \big(
\|\mathcal F(\al,\be)\|^2_{g\i}
-\big\langle \mathcal F(\al,\al)),\mathcal F(\be,\be)\big\rangle_{g\i} \big)(x)\,, 
\\
& R_3 = -\tfrac34 \| \mathcal D(\al,\be)-\mathcal D(\be,\al) \|^2_{g_x} \,.
\end{align*}

\subsection{Landmark space as homogeneus space of solitons}
This subsection is based on \cite{Micheli2012};
the method explained here has many applications in computational anatomy and elswhere, under the 
name LDDMM (large diffeomorphic deformation metric matching).

A {\em landmark} $q=(q_1,\dots,q_N)$ is an $N$-tuple 
of distinct points in $\mathbb R^n$;    
landmark space $\on{Land}^N(\mathbb R^n)\subset (\mathbb R^n)^N$ is open.
Let $q^0=(q^0_1,\dots,q^0_N)$ be a fixed standard template 
landmark. Then we have the surjective mapping
\begin{align*}
&\on{ev}_{q^0}:\on{Diff}_{\mathcal A}(\mathbb R^n)\to \on{Land}^N(\mathbb R^n),
\\&
\ph\mapsto \on{ev}_{q^0}(\ph)=\ph(q^0)=(\ph(q^0_1),\dots,\ph(q^0_N)).
\end{align*}
Given a Sobolev metric of order $s>\frac{n}2 + 2$ on $\Diff_{\mathcal A}(\mathbb R^n)$,
we want to induce a Riemannian metric on $\on{Land}^N(\mathbb R^n)$ such that $\on{ev}_{q_0}$ becomes a 
Riemannian submersion. 

The fiber of $\on{ev}_{q^0}$ over a landmark $q=\ph_0(q^0)$ is  
\begin{align*}
\{\ph\in\on{Diff}_{\mathcal A}(\mathbb R^n): \ph(q^0)=q\}
&= \ph_0\o\{\ph\in\on{Diff}_{\mathcal A}(\mathbb R^n): \ph(q^0)=q^0\}
\\&
= \{\ph\in\on{Diff}_{\mathcal A}(\mathbb R^n): \ph(q)=q\}\o\ph_0\,.
\end{align*}
The tangent space to the fiber is 
$$
\{X\o \ph_0: X\in \X_{\mathcal S}(\mathbb R^n), X(q_i) = 0 \text{  for all } i \}.
$$
A tangent vector $Y\o\ph_0 \in T_{\ph_0}\on{Diff}_{\mathcal S}(\mathbb R^n)$ is 
$G_{\ph_0}^L$-perpendicular to the fiber over $q$ if and only if
$$
\int_{\mathbb R^n} \langle LY, X \rangle\,dx =0\quad 
\forall X\text{  with }X(q)=0.
$$
If we require $Y$ to be smooth then $Y=0$. So we assume that 
$LY=\sum_i P_i.\de_{q_i}$, a distributional vector field with support
in $q$. Here $P_i\in T_{q_i}\mathbb R^n$. 
But then  
\begin{align*}
Y(x) &= L\i\Big(\sum_i P_i.\de_{q_i}\Big) 
= \int_{\mathbb R^n} K(x-y)\sum_i P_i.\de_{q_i}(y)\,dy
%\\&
= \sum_i K(x-q_i).P_i\,,
\\
&T_{\ph_0}(\on{ev}_{q^0}).(Y\o\ph_0) = Y(q_k)_k = \sum_i (K(q_k-q_i).P_i)_k\,.
\end{align*}
Now let us consider a tangent vector $P=(P_k)\in T_q\on{Land}^N(\mathbb R^n)$. Its
horizontal lift with footpoint $\ph_0$ is $P^{\text{hor}}\o\ph_0$
where the vector field $P^{\text{hor}}$ on $\mathbb R^n$ is given as follows:
Let $K\i(q)_{ki}$ be the inverse of the $(N\x N)$-matrix $K(q)_{ij}=K(q_i-q_j)$.
Then  
\begin{align*}
P^{\text{hor}}(x) &= \sum_{i,j} K(x-q_i)K\i(q)_{ij}P_j\,,
\\
L(P^{\text{hor}}(x))&= \sum_{i,j} \de(x-q_i)K\i(q)_{ij}P_j\,.
\end{align*}
Note that $P^{\text{hor}}$ is a vector field of class $H^{2l-1}$.

The Riemannian metric on $\on{Land}^N$ induced by the $g^L$-metric on
$\on{Diff}_{\mathcal S}(\mathbb R^n)$ is 
\begin{align*}
g^L_q(P,Q) &= G^L_{\ph_0}(P^{\text{hor}}, Q^{\text{hor}})
%\\&\quad
= \int_{\mathbb R^n}\langle L(P^{\text{hor}}),Q^{\text{hor}} \rangle\,dx
\\&
= \int_{\mathbb R^n}\Big\langle \sum_{i,j}\de(x-q_i)K\i(q)_{ij}P_j,
%\\&\qquad\qquad\qquad
 \sum_{k,l} K(x-q_k)K\i(q)_{kl}Q_l \Big\rangle\,dx
\\&
= \sum_{i,j,k,l} K\i(q)_{ij}K(q_i-q_k)K\i(q)_{kl}\langle P_j,Q_l \rangle
\\
g^L_q(P,Q) &= \sum_{k,l} K\i(q)_{kl}\langle P_k,Q_l \rangle.
\end{align*}
The {\em geodesic equation} in vector form is:  
\begin{align*}
\ddot q_n =
& -\frac12
\sum_{k,i,j,l}K\i(q)_{ki}
\on{grad} K(q_i-q_j)(K(q)_{in}-K(q)_{jn}) 
%\\&\qquad\qquad
K\i(q)_{jl}\langle \dot q_k,\dot q_l \rangle
\\&
+ \sum_{k,i}K\i(q)_{ki}
\Big\langle\on{grad} K(q_i-q_n),\dot q_i-\dot q_n\Big\rangle 
\dot q_k\,.
\end{align*}
The cotangent bundle  $T^*{\on{Land}^N(\mathbb R^n)}=  
\on{Land}^N(\mathbb R^n)\x ((\mathbb R^n)^N)^*\ni (q,\al)$. 
We shall treat $\mathbb R^n$ like scalars;
$\langle\quad,\quad  \rangle$ is always the standard inner product on $\mathbb R^n$.
\\
The metric looks like 
\begin{align*}
(g^L)\i_q(\al,\be) &= \sum_{i,j} K(q)_{ij}\langle \al_i,\be_j \rangle,
\qquad
K(q)_{ij} = K(q_i-q_j).
\end{align*}
The energy function is
\begin{align*}
E(q,\al)&=\tfrac12 (g^L)\i_q(\al,\al) 
= \tfrac12\sum_{i,j} K(q)_{ij}\langle \al_i,\al_j \rangle
\end{align*}
and its Hamiltonian vector field (using $\mathbb R^n$-valued derivatives to save notation)  is
\begin{align*}
H_E(q,\al) &=
  \sum_{i,k=1}^N \Big(K(q_k-q_i)\al_i\frac{\p}{\p q_k} 
%\\&\qquad\qquad
+ \on{grad} K(q_i-q_k)\langle \al_i,\al_k \rangle\frac{\p}{\p \al_k}\Big). 
\end{align*}
So the {\em Hamitonian version of the geodesic equation} is the flow of this vector field:
$$
\begin{cases}
\dot q_k &= \sum_i K(q_i-q_k)\al_i
\\
\dot \al_k &= -\sum_i  \on{grad} K(q_i-q_k) \langle \al_i,\al_k \rangle
\end{cases}
$$
We shall use {\em stress and force} to express the geodesic equation and curvature:
\begin{align*}
\al^\sharp_k =\sum_i &K(q_k-q_i)\al_i,
\quad \al^\sharp = \sum_{i,k} K(q_k-q_i)\langle \al_i,\tfrac{\p}{\p q^k} \rangle
%\\
%\mathcal D_{ij}(\al) :&=  dK(q_i-q_j)(\al^\sharp_i-\al^\sharp_j) = \mathcal D_{ji}(\al)
%\\
%\mathcal D_i(\al,\be) :&= \sum_j \mathcal D_{ij}(\al)\be_j =\sum_j dK(q_i-q_j)(\al^\sharp_i-\al^\sharp_j)\be_j 
\\
\mathcal D(\al,\be) :&%= \sum_i \Big\langle \mathcal D_i(\al,\be),\frac{\partial}{\partial q_i}\Big\rangle 
%= \sum_{i,j} \mathcal D_{ij}(\al)\Big\langle \be_j,\frac{\partial}{\partial q_i}\Big\rangle 
%\\&
= \sum_{i,j} 
dK(q_i-q_j)(\al^\sharp_i-\al^\sharp_j)\Big\langle \be_j,\frac{\partial}{\partial q_i}\Big\rangle, 
%\\&
\quad\text{the {\it stress.}}
\\
\mathcal D(\al,\be) &- \mathcal D(\be,\al) = (D_{\al^\sharp}\be^\sharp) - 
D_{\be^\sharp}\al^\sharp =[\al^\sharp,\be^\sharp],
%\\&
\quad\text{{\it Lie bracket.}}
\\
\mathcal F_i(\al,\be) &= \frac12\sum_k \on{grad} K(q_i-q_k)(\langle \al_i,\be_k \rangle + \langle \be_i,\al_k \rangle )
\\
\mathcal F(\al,\be) :&= \sum_i \langle \mathcal F_i(\al,\be),dq_i\rangle = \frac12\, d\,g\i(\al,\be) 
%\\&
\quad\text{the {\it force}.} 
\end{align*}
The geodesic equation on $T^*\on{Land}^N(\mathbb R^n)$ then becomes 
$$
\begin{cases}
\dot q &= \al^\sharp 
\\
\dot \al &= - \mathcal F(\al,\al) \,.
\end{cases}
$$
Next we shall compute {\em curvature via the cotangent bundle}.
From the semilocal version of Mario's formula for the numerator of the sectional curvature
for constant 1-forms $\al,\be$ on landmark space, where 
$\al^\sharp_k =\sum_i K(q_k-q_i)\al_i$, we get directly:
\begin{align*}
&g^L\big(R(\al^{\sharp},\be^{\sharp})\al^{\sharp},\be^{\sharp}\big) =
\\&=
\big\langle \mathcal D(\al,\be) + \mathcal D(\be,\al),\mathcal F(\al,\be)\big\rangle 
\\&\quad
-\big\langle \mathcal D(\al,\al),\mathcal F(\be,\be)\big\rangle 
-\big\langle \mathcal D(\be,\be),\mathcal F(\al,\al)\big\rangle
\\&\quad
-\tfrac12\sum_{i,j}\Big(
d^2K(q_i-q_j)(\be^\sharp_i-\be^\sharp_j,\be^\sharp_i-\be^\sharp_j)\langle \al_i,\al_j\rangle
\\&\qquad\qquad
-2d^2K(q_i-q_j)(\be^\sharp_i-\be^\sharp_j,\al^\sharp_i-\al^\sharp_j)\langle \be_i,\al_j\rangle
\\&\qquad\qquad
+d^2K(q_i-q_j)(\al^\sharp_i-\al^\sharp_j,\al^\sharp_i-\al^\sharp_j)\langle \be_i,\be_j\rangle
\Big)
\\&\quad
-\|\mathcal F(\al,\be)\|_{g\i}^2 + g\i\bigl(\mathcal F(\al,\al),\mathcal F(\be,\be)\bigr). 
\\&\quad
+\tfrac34 \| [\al^\sharp,\be^\sharp] \|_g^2
\end{align*}

\subsection{Shape spaces of submanifolds as homogeneous spaces for the diffeomorphism group}
Let $M$ be a compact manifold and $(N,\bar g)$ a Riemannian manifold of bounded geometry as in 
subsection \ref{wRmDiff}.
The diffeomorphism group $\Diff_{\mathcal A}(N)$ acts also from the left on the manifold of 
$\Emb(M,N)$ embeddings and also on the {\em non-linear Grassmannian} or 
{\em differentiable Chow variety} $B(M,N)=\Emb(M,N)/\Diff(M)$. 
For a Sobolev metric of order $s>\frac{\dim(N)}2+ 2$ one can then again induce a Riemannian metric 
on each $\Diff_{\mathcal A}(N)$-orbit, as we did above for landmark spaces. 
This is done in \cite{Michor127}, where the geodesic equation is computed and where curvature is 
described in terms of stress and force. 

\small\parskip=0pt
%\bibliographystyle{abbrv}
%\bibliography{../../ref/ref,../../ref/biblio,../../ref/preprints,../../ref/articles}
\def\cprime{$'$} \def\cprime{$'$}

\end{document}